\theoremstyle{plain}
\newtheorem{theorem}{Theorem}[section]
\newtheorem{proposition}[theorem]{Proposition}
\newtheorem{lemma}[theorem]{Lemma}
\newtheorem{corollary}[theorem]{Corollary}
\newtheorem{remark}[theorem]{Remark}
\theoremstyle{definition}
\newtheorem{definition}[]{Definition}
\newtheorem{example}[theorem]{Example}
\newcommand{\im}{\textbf{i}}
\newcommand{\bydef}{\,\stackrel{\mbox{\tiny\textnormal{\raisebox{0ex}[0ex][0ex]{def}}}}{=}\,}
\newcommand{\overa}{\overline{a}}
\newcommand{\ellnu}{\ell_{\nu}^1}
\begin{document}
\title{
%A parameterization method approach to \\
%computer assisted proofs for transverse heteroclinics
Computer assisted proofs for transverse heteroclinics \\
by the parameterization method
%between periodic orbits in dissipative and Hamiltonian systems
}

\author{
J.D. Mireles James 
 \thanks{J.M.J partially supported by NSF 
grant DMS - 2307987.}
\footnote{Florida Atlantic University, 777 Glades Rd.,
Boca Raton, FL 33431, USA. jmirelesjames@fau.edu} \and
Maxime Murray  \footnote{Florida Atlantic University, 777 Glades Rd.,
Boca Raton, FL 33431, USA. mmurray2016@fau.edu} 
}

\maketitle

\begin{abstract}
This work develops a functional analytic framework for making
computer assisted arguments involving  
transverse heteroclinic connecting orbits between hyperbolic 
periodic solutions of ordinary differential equations. We exploit a Fourier-Taylor 
approximation of the local stable/unstable manifold of the periodic orbit, combined 
with a numerical method for solving two point boundary value problems via 
Chebyshev series approximations. The a-posteriori analysis developed provides 
mathematically rigorous bounds on all 
approximation errors, providing both abstract existence results and quantitative information 
about the true heteroclinic solution.  Example calculations are given for both the 
dissipative Lorenz system and the Hamiltonian Hill Restricted Four 
Body Problem.

\end{abstract}

\begin{center}
{\bf \small Key words.} 
{ \small parameterization method, heteroclinic connecting orbits, 
dissipative systems, Hamiltonian systems, 
computer assisted proof}
\end{center}

\section{Introduction} \label{sec:intro}
This paper describes constructive, computer assisted arguments
for proving theorems about cycle-to-cycle connecting orbits 
for ordinary differential equations. Our approach utilizes the parameterization 
method for stable/unstable manifolds attached to periodic 
orbits, as was developed in 
\cite{MR2551254,MR3118249,MR4130403,MR4030405,MR3304254,MR3871613,MR3754682}.
Validated numerical methods for 
Fourier, Taylor, and Chebyshev spectral approximations of these stable/unstable manifolds 
were developed (for example) in
\cite{MR0648529,MR0883539,MR2443030,MR2776917,HLM,MR3896998,MR3148084}.
(See also the references therein).

The ideas described here are part of an ongoing program
whose goal is to develop functional analytic tools for 
computer assisted proofs (CAPs) in nonlinear nonlinear analysis. 
The present work both builds on, and extends the
ideas of \cite{MR2821596,MR3353132,MR3207723,MR3906230,MR3022075,MR3281845}
on CAP for transverse heteroclinic and homoclinic connections between 
equilibrium solutions of vector fields, and the work of \cite{MR4658475} on 
functional analytic CAPs for transverse homoclinic connections for periodic orbits in 
Hamiltonian systems.  (Methods of CAP based on other methods
are discussed briefly in Remark \ref{rem:literature} below).

To facilitate the discussion, let $U \subset \mathbb{R}^d$ denote
a connected open set and $f \colon U \to \mathbb{R}^d$
be a real analytic vector field.  Suppose that $T > 0$, that 
$\lambda \in \mathbb{C}$ has nonzero real part, and that   
 $P \colon \mathbb{R} \times [-\tau,\tau] \to \mathbb{R}^d$ is a 
 smooth function, $T$-periodic in its first variable, taking values in 
 $U$. If $P$ solves the partial differential equation equation 
\begin{equation} \label{eq:parmMethod}
\frac{\partial}{\partial \theta} P(\theta, \sigma) 
+ \lambda \sigma \frac{\partial}{\partial \sigma} P(\theta, \sigma)
= f(P(\theta, \sigma)),
\end{equation}
then the image of $P$ is a local stable or unstable manifold 
for a periodic orbit of the vector field $f$.

To see this, note first 
that the image of $P$ is a cylinder, due to the 
periodicity of $P(\theta, \sigma)$ in its first variable.
Next observe that the equator of the cylinder is 
a periodic solution of the differential equation $x' = f(x)$.
To see this let $\gamma(\theta) = P(\theta, 0)$,
and evaluate Equation \eqref{eq:parmMethod} at $\sigma = 0$.
This gives that 
\[
\frac{d}{d \theta} \gamma(\theta) = f(\gamma(\theta)),
\]
as desired.

Assume now that $\mbox{real}(\lambda) < 0$.
(The unstable case of $\mbox{real}(\lambda) > 0$ is treated in a similar
fashion).  Define  
\[
x(t) = x(t, \theta, \sigma) = P(t + \theta, \sigma e^{\lambda t}), 
%\quad \quad \quad 
%t \geq 0, \,  \theta \in \mathbb{R} \mbox{ and } \sigma \in (-\tau, \tau).
\]
with $t \geq 0$ and $\sigma \in [-\tau, \tau]$.
To see that $x(t)$ is on the stable manifold
of $\gamma$, one must show that  
$x(t)$ is an solution of the differential equation
and then consider the limit as $t \to \infty$.

Write $\bar \sigma = \sigma e^{\lambda t}$ and 
$\bar \theta = \theta + t$.  Then for $t > 0$ and
$\bar \sigma \in [-\tau, \tau]$, we have that 
$(\bar \theta, \bar \sigma)$ is in the domain of $P$,
and 
\begin{align*}
\frac{d}{dt} x(t) & = \partial_1 P(t + \theta, \sigma e^{\lambda t}) 
+ \lambda \sigma e^{\lambda t}  \partial_2 P(t + \theta, \sigma e^{\lambda t}) \\
&= \partial_1 P(\bar\theta, \bar \sigma) 
+ \lambda \bar \sigma  \partial_2 P(\bar, \bar \sigma) \\
& = f(P(\bar \theta, \bar \sigma)) \\
& = f(x(t)).
\end{align*}
Here we pass from the second to the third line by applying the 
invariance equation \eqref{eq:parmMethod}.

Now we note that 
\begin{align*}
\lim_{t \to \infty} \|x(t, \theta, \sigma) -\gamma(t)\| &= 
\lim_{t \to \infty} \| P(t + \theta, \sigma e^{\lambda t})  - P(t + \theta, 0) \| \\
& = 0,  
\end{align*} 
so that the image of $P$
is a local stable manifold as desired.
We refer to the curve $x(t) = x(t, \theta, \sigma)$ is a 
``whisker'' of the periodic orbit, and that the whisker has
asymptotic phase $\theta$.

Note also that 
\[
\xi(\theta) = \frac{\partial}{\partial \sigma} P(\theta, 0),
\]
is the normal vector bundle (eigenfunction) 
associated with the Floquet multiplier $\lambda$.
%
% Want 
%
% \xi(\theta)' + \lambda \xi(\theta) = DF(\gamma(\theta)) \xi(\theta)
%
To see this, observe that 
\begin{align*}
\frac{\partial}{\partial \theta} P(\theta, \sigma) = f(P(\theta, \sigma)) 
- \lambda \sigma \frac{\partial}{\partial \sigma} P(\theta, \sigma), 
\end{align*}
so that by taking the partial with respect to $\sigma$, we have 
\begin{align*}
\frac{\partial}{\partial \sigma} \frac{ \partial}{\partial \theta}  P(\theta, \sigma)&=
\frac{\partial}{\partial \theta} \frac{\partial}{\partial \sigma} P(\theta, \sigma) \\ 
& = Df(P(\theta, \sigma)) \frac{\partial}{\partial \sigma} P(\theta, \sigma) 
- \lambda \frac{\partial}{\partial \sigma} P(\theta, \sigma) 
- \lambda \sigma \frac{\partial^2}{\partial \sigma^2} P(\theta, \sigma).
\end{align*}
Evaluating at $\sigma = 0$ and plugging in the definitions of $\xi(\theta)$
and $\gamma(\theta)$, we see that 
\[
\frac{d}{d \theta} \xi(\theta) + \lambda \xi(\theta) = Df(\gamma(\theta)) \xi(\theta), 
\]
which we recognize as the equation of first variation for the normal invariant 
vector bundle/eigenfunction associated with the Floquet multiplier $\lambda$.

Now suppose that 
$P, Q$ are parameterizations of the local unstable and stable manifolds
of a pair periodic orbits $\gamma_1$ and $\gamma_2$ and suppose 
that $y \colon [0, R] \to \Omega$ is an orbit segment which begins on the image
of $P$ and terminates on the image of $Q$.  Then the orbit of any point on $y$
accumulates to $\gamma_1$ in backward time, to $\gamma_2$
in forward time, and hence is a heteroclinic connection from 
$\gamma_1$ to $\gamma_2$.  
Such an orbit segment can be seen as a zero of the operator equation 
\[
F(y, \alpha, \beta, T) = \left(
\begin{array}{c}
y(t) - P(\alpha) - \int_0^t f(y(s)) \, ds   \\
Q(\beta) - \int_0^T f(y(s)) \, ds
\end{array}
\right),
\]
where $\alpha, \beta$ are coordinates on the stable/unstable manifolds.

Appropriate phase conditions, 
and transversality are discussed in Section \ref{Section:Orbit}, where
we project this functional equation, and the appropriate functional equations for
for the parameterized manifolds, into Banach spaces of rapidly decaying coefficient 
sequences.  The main point at the moment is that the parameterizations $P$ and 
$Q$ allow us to formulate two point boundary value problems (BVP) describing 
the connecting orbits, and that these BVPs can be studied using 
well established methods of computer assisted proof.  See for example 
\cite{MR1131109,MR1100582,MR1159927,MR1208187,MR2220064,
MR2174417,MR2173545,MR2821596,BDLM}.

In the remainder of the paper, we describe the method and its application 
in both dissipative and Hamiltonian systems (these require different phase
conditions).  We illustrate its utility for both polynomial 
and nonpolynomial systems.  
Our computational method and a-posteriori validation schemes
for the periodic orbits and their local stable/unstable manifolds
are discussed in Section \ref{Section:Manifold} with the 
validation of the connecting orbits discussed in Section 
\ref{Section:Orbit}.  Example applications are presented in 
Section \ref{sec:examples}.
A number of technical details are developed in the 
Appendices \ref{Radii}, 
\ref{sec:BanachSpaces}, and 
\ref{sec:CauchyBound}.
But before moving on to these developments,
 we first provide a few additional remarks and then 
introduce the main example applications studied in the 
present work.

\begin{remark}[Solving Equation \eqref{eq:parmMethod}] \label{rem:schemes}
{\em
There are at lest two common approaches to solving Equation \eqref{eq:parmMethod}.
The first is to treat it as an ``elliptic'' equation, 
and solve ``all at once'' via some iterative scheme. 
The second is to make a
power series ansatz
\[
P(\theta, \sigma) = \sum_{n=0}^\infty p_n(\theta) \sigma^n, 
\]
for the solution, and to derive the \textit{homological equations}
satisfied by the coefficient functions $p_n(\theta)$.  These homological 
equations turn out to be linear ODEs, and one can solve
them recursively.  When pursuing this later approach, the calculations
above determine the solution $P$ to first order.

It is the order-by-order approach which we adopt below, and we
note that it has been exploited in earlier 
computer assisted proofs for the parameterization method, in
a number of different contexts.  See for example
\cite{MR2821596,MR3207723,manifoldPaper1,MR4658475,MR3871613}
and also the lecture notes of \cite{MR3792792}.
Validated computer assisted error bounds for the
 ``elliptic'' or ``all-in-one-go'' approach of (i) are discussed 
 in detail in \cite{MR3906120,MR3735860}.  
}
\end{remark}

\begin{remark}[Literature on CAP for connecting orbits] \label{rem:literature}
{\em
To the best of our knowledge, the first computer assisted proof of chaotic
dynamics appeared in the early 1990's \cite{MR1236201}.  The 
paper considered the area preserving Taylor-Greene-Chirikov Standard map,
and the proof exploited the homoclinic tangle theorem of Smale \cite{MR0228014}.  
In this approach, the main steps in  
the argument involved establishing the existence of a 
transverse homoclinic intersection between 
the (one dimensional) stable and unstable manifolds of the 
map's hyperbolic fixed point.

By the late 1990's and early 2000's a number of authors had 
developed computer assisted arguments using topological tools  
-- like either Conley or Brower indices --
and applied them to prove theorems about chaotic dynamics in 
both discrete and continuous time dynamical systems.  For 
example the papers 
\cite{MR1394246,MR1485762,MR1429839} provided proofs of 
chaotic dynamics in the Henon map and in a Poincar\'{e} section of 
the R\"{o}ssler system.  Extensions to non-uniformly hyperbolic chaotic 
sets like in the Lorenz and Chua circuit are found in 
\cite{MR1453709,MR1626596,MR1661345,MR1276767,MR1459392,MR1808460}.
More recently, topological techniques for CAP have also been extended to 
infinite dimensional systems like population models with spatial 
dispersion \cite{MR2067140,MR3124898} and 
parabolic partial differential equations \cite{MR4113209}.

The methods just cited establish the existence of chaotic invariant 
horseshoes via direct topological covering arguments. 
The resulting sets may however not be attracting.  
Geometric/topological techniques for proving theorems
about chaotic attractors were first developed in 
\cite{MR1701385,MR1870856} to resolve Smale's 14th question.
Extensions of these methods are found in \cite{MR2736320,MR3906255}.

Finally we mention that computer assisted methods of proof for 
heteroclinic and homoclinic solutions of ODEs based on geometric/topological 
methods have a long history as well.  For example, the papers 
\cite{MR1870131,MR1862804} describe a geometric approach 
based on covering relations/cone conditions, and apply
this approach to the Henon systems. 
Extensions to some problems in Celestial mechanics
and to the Michelson system are given in 
\cite{MR1961956,MR2174417,MR2173545,MR2271217,MR2262261}.

A method which also leads to transversality 
is developed, and applied to problems in Celestial Mechanics
(similar to the problems considered here) in \cite{MR3032848}.
We also mention that topological/geometric methods for studying Arnold 
diffusion in Celestial Mechanics problems have been developed in 
\cite{MR4544807}.
A study of the connecting orbit structure of the Swift-Hohenberg 
PDE using Conley index/connection matrix techniques is 
in the work of \cite{MR2136516}.

Of course the preceding light review is far from comprehensive, and the interested reader
will find many additional references and a much more complete discussion by 
consulting the papers cited above.  For more information, we also refer
to the review articles of 
\cite{jpjbReview,MR3990999} and to the books of 
\cite{MR2807595,MR3822720,MR3971222}.
}
\end{remark}

\subsection{Two Example Problems}
The functional analytic setup for heteroclinic connections
in dissipative systems is a little different from the setup required in the 
Hamiltonian setting.  To illustrated each, 
we fix a particular dissipative and Hamiltonian example
problem.

\subsubsection{A Dissipative Example: The Lorenz System}
The Lorenz system is a three parameter family of quadratic vector fields
on $\mathbb{R}^3$ given by 
\begin{align} \label{eq:Lorenz}
x' &= \sigma(y - x) \\
y' & = x(\rho - z) - y \\
z' &= xy - \beta z
\end{align}
where the ``classic'' parameter values studied by 
Edward Lorenz in 1963 are $\rho = 28$, $\sigma = 10$, and $\beta = 8/3$ 
\cite{MR4021434}.
The system is an extremely simplified toy model of atmospheric convection and,
perhaps more importantly, 
has become an iconic example of a simple nonlinear system exhibiting 
rich dynamics.  

We note that the system is dissipative (contracts phase space volume in 
forward time) as can be seen by computing the determinant of the 
Jacobian matrix of the vector field.
So periodic solutions, when they exist, 
tend to be isolated.  More precisely, two periodic orbits may be very close together 
in the phase space, but in this case they will have very different periods.  
So, when computing periodic a solution the 
frequency or period of the orbit is one of the unknowns.

For many parameter values (including the classic parameters recalled above)
the dominant feature of the phase space of Equation \eqref{eq:Lorenz} is the so 
called \textit{Lorenz attractor}, illustrated
 in the top frame of Figure \ref{fig:LorenzAttractor}.  Qualitative properties of the dynamics on the 
 attractor are discussed in detail in 
 \cite{MR0461581,MR0556582,MR0682059,MR0718132}.
 A constructive proof of the existence of the Lorenz attractor 
 at the classic parameter values, along with direct verifications of many of 
 its properties was given by Warwick Tucker in 1999 
\cite{MR1701385,MR1870856}.

From the point of view of the present work, what is important is that hyperbolic
periodic orbits are dense in the attractor,  and that we should typically 
expect heteroclinic and homoclinic connections between them.  
The bottom frame of Figure \ref{fig:LorenzAttractor}
illustrates three of the shortest periodic orbits on/near the 
attractor, and highlights the fact that they do provide a skeleton of its shape.
These orbits are coded by $AB$, $AAB$, and $ABB$ where an $A$ or $B$
represents a wind around the left or right lobe of the attractor. 
As an illustration the utility of the techniques described in the present work, we 
prove the existence of these periodic orbits and establish the existence of
transverse heteroclinic connections between them.

\begin{figure}
\centering
\subfigure{\includegraphics[width=0.7\textwidth]{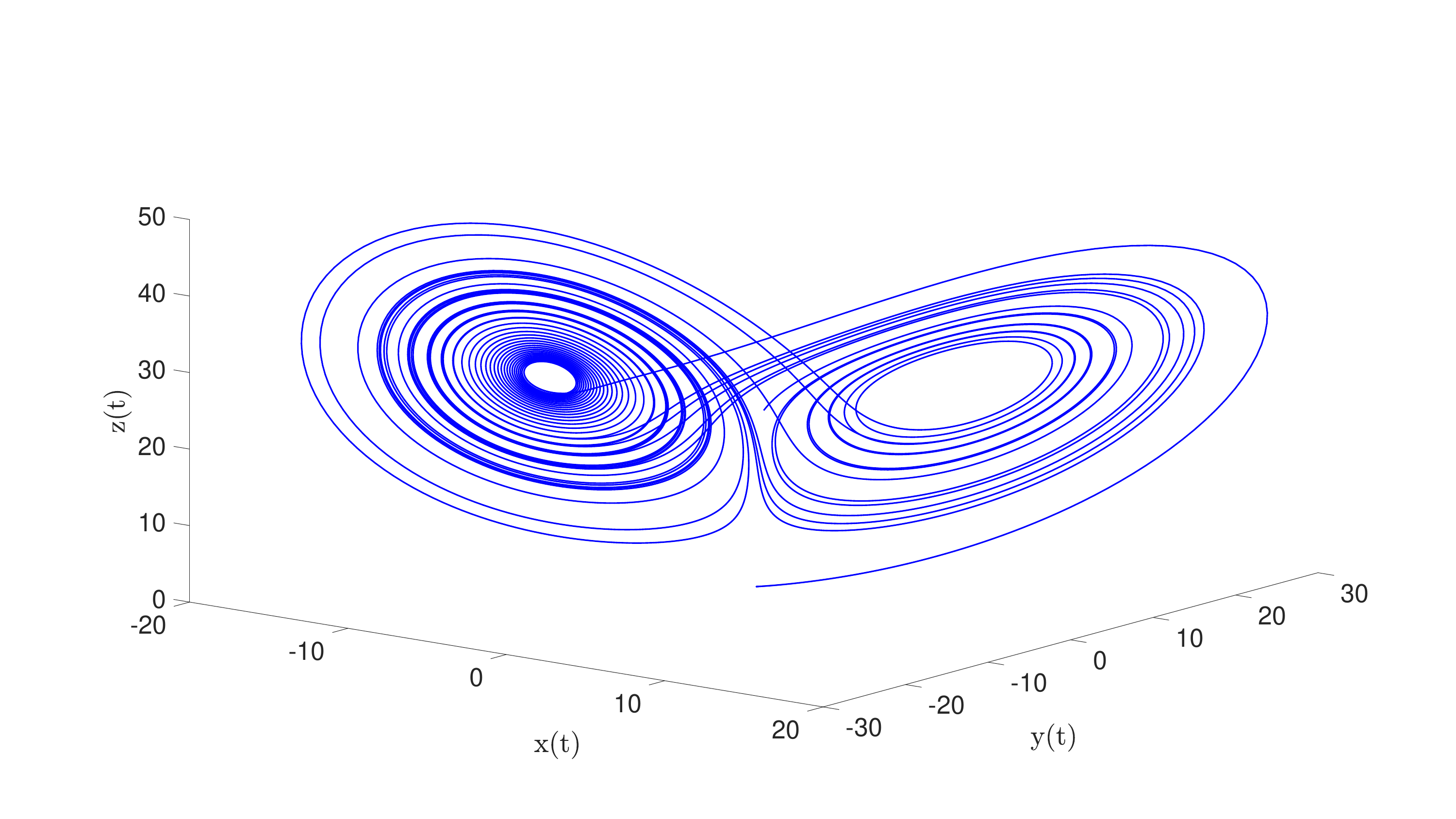}}
\subfigure{\includegraphics[width=0.7\textwidth]{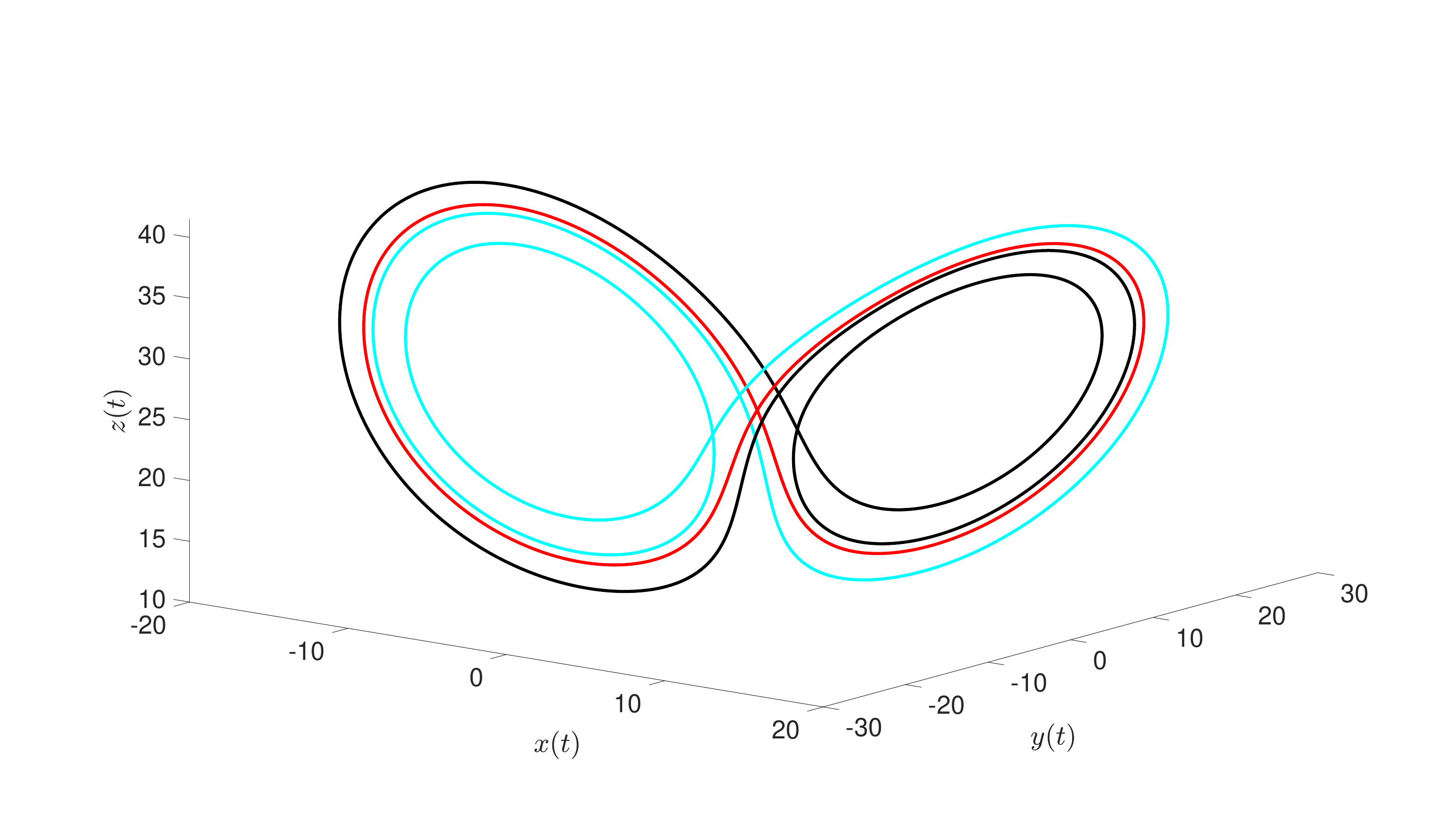}}
\caption{(Top) Illustration of the Lorenz attractor obtained by integrating a single 
initial condition.  Here we chose an initial condition near the origin, but this is 
incidental.  (Bottom) Three periodic orbits of the Lorenz system at the classic parameter values. 
Note that these three periodic orbits already give a strong impression of the shape of the 
Lorenz attractor.
}\label{fig:LorenzAttractor}
\end{figure}

\subsubsection{A Hamiltonian Example: A Lunar Hill Problem}
Many Hamiltonian systems are periodic orbit rich.  
In some cases this is because the system 
is a perturbation of an integrable system, and many periodic orbits 
will survive the perturbation.
In other cases, it may be that Poincare recurrence holds in some 
energy level set, so that periodic orbits are in fact dense.  
In any event, periodic orbits in systems preserving first integrals
generically occur in one parameter families or ``tubes'', and extra 
care is needed to isolate an orbit.  

We the system of differential equations given by 
\begin{align} \label{eq:Hill}
x' & = p \nonumber  \\
y' & = q \\
p' & =  \lambda_2 x + 2 q  - \frac{x}{\left(x^2+y^2 \right)^{3/2}} \nonumber  \\
q'  & =  \lambda_1 y - 2 p   - \frac{y}{\left(x^2+y^2 \right)^{3/2}}. \nonumber
\end{align}
Here
\[
\lambda_1 = \frac{3}{2}(1 - d), \quad \quad \quad \lambda_2 = \frac{3}{2} (1 + d), 
\]
and 
\[
d = \sqrt{1 - 3 \mu + 3 \mu^2}.
\]
We refer to Equation \eqref{eq:Hill} as the Hill equilateral restricted four 
body problem (HRFBP), and note that it reduces to the 
standard lunar Hill problem when $\mu = 0$.  
%These equations describe the dynamics near a moon, astroid, or other 
%massive body which is participating in an equilateral triangle relative equilibrium  
%with two other massive bodies.  The body of interest is located at the origin, 
%while the other two massive bodies have been ``pushed out'' to infinity.   
The HRFBP has the continuous symmetry 
\[
J(x,y,p,q) =  
\lambda_2 x^2 + \lambda_1 y^2  - p^2 - q^2
 + \frac{2}{\sqrt{x^2 + y^2}},
\]
which is referred to as the Jacobi integral.

Equation \eqref{eq:Hill} was derived by Burgos and Gidea in \cite{MR3346723},
and describes the dynamics of an infinitesimal particle (like a small rock or man-made satellite)
moving in the vicinity of a large astroid.  The astroid participates in a three body equilateral triangle
relative equilibrium (Lagrangian triangle), in the following sense.  
We assume that there are three massive gravitating bodies $m_1 \gg m_2 \gg m_3 > 0$, 
where $m_3$ is the mass of the astroid, and that these three bodies 
 orbit their common center of mass on Keplerian 
circles in such a way that - in an appropriate co-rotating frame - the three bodies form the vertices of an 
equilateral triangle.  The two larger massive bodies $m_1$ and $m_2$ are ``sent to infinity'' in such a way that 
their influence is still felt at $m_3$, which is located at the origin.  The parameter $\mu = m_1/(m_1+ m_2)$
describes the mass ratio of the two massive bodies at infinity.  Note that $\mu \in [0, 1/2]$ as
$\mu = 1$ when $m_2 = 0$ and $\mu = 1/2$ when $m_2 = m_1$.

This kind of configuration actually occurs
in our solar system, for example with $m_1$ the Sun, $m_2$ Jupiter, and $m_3$ a 
Trojan astroid like Hector,
so that the orbits of Equation \eqref{eq:Hill} would describe the dynamics of a satellite
maneuvering close enough to Hector that the effects of the Sun and Jupiter can be 
considered as small. 
Other configurations exist involving Jupiter and its moons.  

The system has four equilibrium solutions,
also known as \textit{libration points},  
two of which  are on the $x$-axis and are denoted by 
$L_1$ and $L_2$. 
These are located at 
\[
L_1 = \left(
\frac{1}{\lambda_2^{1/3}}
\right), \quad \quad \mbox{and} \quad \quad 
L_2 = \left(
\frac{-1}{\lambda_2^{1/3}}
\right).
\]
The other two equilibria located on the $y$-axis denoted by $L_3$ and $L_4$
and located at 
\[
L_3 = \left(
\frac{1}{\lambda_1^{1/3}}
\right), \quad \quad \mbox{and} \quad \quad 
L_4 = \left(
\frac{-1}{\lambda_1^{1/3}}
\right),
\]
play no role in the present work.

While the stability type of $L_3$ and $L_4$ depend on the mass ratio $\mu$, 
$L_{1,2}$ have saddle-center stability for all values of $\mu \in (0, 1/2]$.
The center eigenvalues at $L_1$ and $L_2$ give rise to a Lyapunov family of 
periodic orbits, locally parameterized by the Jacobi integral.  For energies near 
the energy of $L_{1,2}$, the Lyapunov orbits inherit one stable and one unstable 
Floquet multipliers from the stable/unstable directions of $L_{1,2}$, and hence
have attached stable/unstable manifolds.  The stable/unstable manifolds
of these periodic orbits could in turn intersect
in the energy level set, giving rise to chaotic dynamics. 

In Figure \ref{figure:LyapunovFamily}, we illustrate a number of 
periodic orbits in the Lyapunov families of $L_{1,2}$, for some selected values of the Jacobi integral. 
The small and large green orbits are at a the same energy levels, and 
establish the existence of heteroclinic connections
between some of these (and some others) below.

\begin{figure}
\centering
\includegraphics[width=0.9\textwidth]{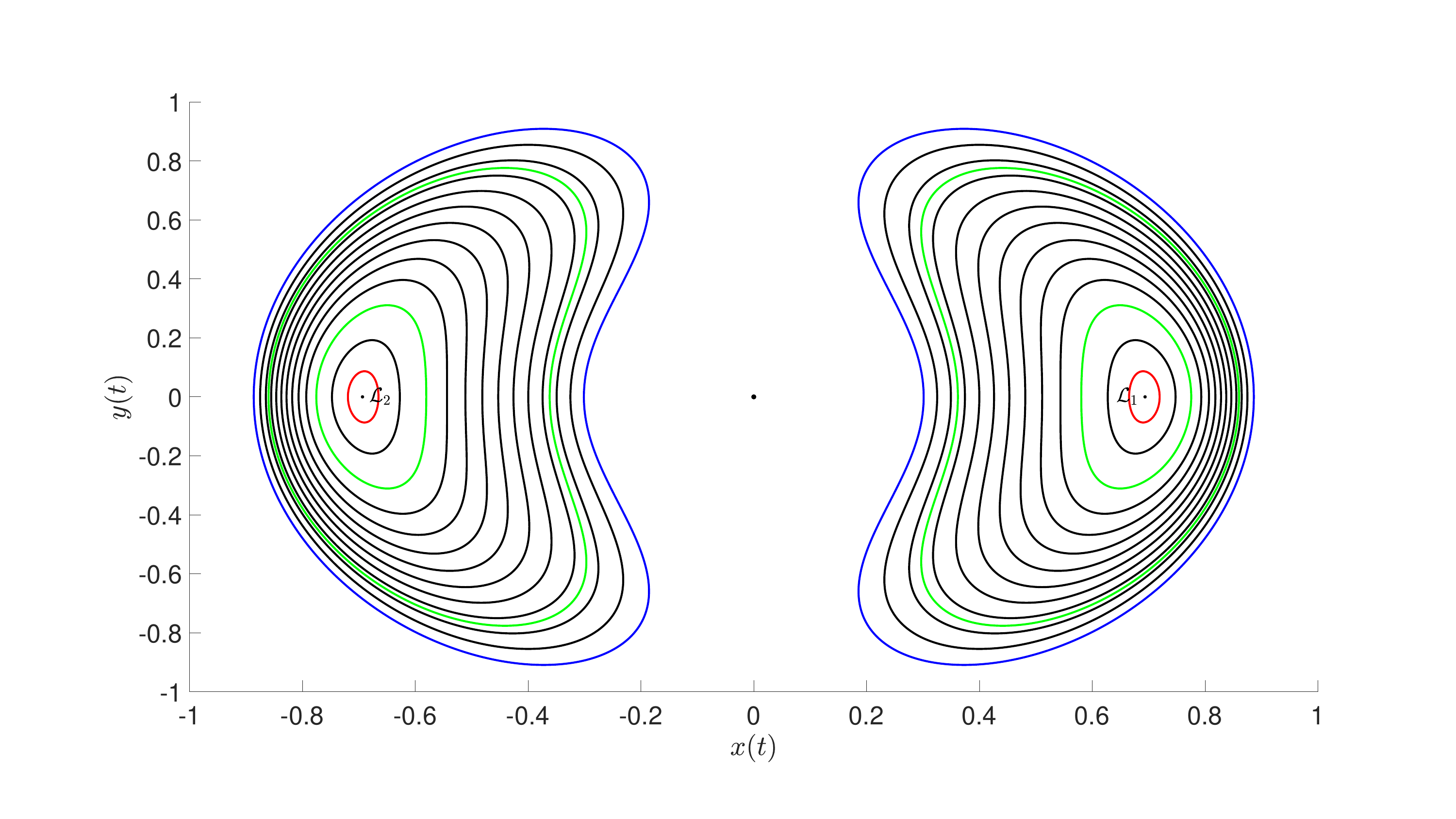}

\caption{Family of periodic orbits at $\mathcal{L}_1$ and $\mathcal{L}_2$ for $\mu=0.00095$. The orbits displayed have Jacobi integral between $C=2.00$ (in blue) and $C=4.30$ (in red). The periodic orbits at $C=4$ and $C=2.5$
(shown in green), are the basis of the computations discussed in Section \ref{sec:HillResults}.
That is, we will establish the existence of heteroclinic connections between these.} \label{figure:LyapunovFamily}
\end{figure}

\begin{remark}[Polynomial Embedding] \label{rem:poly}
{\em
The methods of the present work rely heavily on formal 
series manipulations which are much simpler when we 
work with polynomial nonlinearities.  For this reason, we will
derive a vector field which is somehow
equivalent to Equation \eqref{eq:Hill}, but which has 
only polynomial nonlinearities.  This can be done   
using an embedding technique discussed in detail in 
\cite{MR4208440,MR4292532,MR3906230}.

The idea is to introduce new variables for the non-polynomial 
terms, and append polynomial differential equations 
describing these nonlinearities.  Since the elementary 
functions of mathematical physics themselves solve 
simple (usually polynomial) differential equations, this idea applies
to many of the examples which arise in practice.  
The cost of this embedding is that we increase the dimension 
of the system by a number which depends on the 
complexity of the non-polynomial terms.

To see this idea at work in the HRFBP,
we begin by letting
\[
r = x^2 + y^2,
\]
so that
\[
z(r) = (x^2 + y^2)^{-1/2} = r^{-1/2}.
\]
Note that $z(r)$ generates the non-polynomial part of 
Equation \eqref{eq:Hill} in the following sense.
Differentiating obtains 
\begin{align*}
z' & = -\frac{1}{2} r^{-3/2} (2x' x+ 2y' y) \\  
& = -(r^{-1/2})^3 (x x' + y y') \\
&  = -z^3 (x x' + y y'). \\    %-1/2 - 1 = -3/2
\end{align*}
Combining this with Equation \eqref{eq:Hill},
and making the appropriate substitutions, we have 
\begin{align} \label{eq:HillPoly}
x' & = p  \nonumber \\
y' & = q \nonumber \\
p' & =  \lambda_2 x + 2 q  - x z^3   \nonumber \\
q'  & =  \lambda_1 y - 2 p   - y z^3 \nonumber  \\
z' & = -z^3 (x p + y q).
\end{align}
Now, given initial conditions $(x_0, p_0, y_0, q_0) \in \mathbb{R}^4$,
we require that 
\[
z(0) = z_0 = \frac{1}{\left(x_0^2 + y_0^2 \right)^{1/2}}.
\]
Note that if $x$, $y$ are non-zero, then this constraint is 
equivalent to  
\[
\left(x_0^2 + y_0^2 \right)^{1/2} z_0 = 1,
\]
or, after squaring
\begin{equation}\label{eq:polyConstraint}
\left(x_0^2 + y_0^2 \right) z_0^2 = 1.  
\end{equation}

We can easily check that, for any orbit which does 
not pass through $x = y = 0$ (collision) 
the first four components of a
 solution of Equation \eqref{eq:HillPoly} provide a solution 
to Equation \eqref{eq:Hill}. 
Note that if $x = y = 0$ in Equation \eqref{eq:HillPoly}, then 
we cannot reverse the derivation of the constraint 
Equation \eqref{eq:polyConstraint}.  That is, the constraint
on the initial condition carries the singularity of the original 
problem (i.e. this polynomial embedding technique, despite first appearances,
is not a regularization technique).
}
\end{remark}

\section{Validated numerics for the parameterization method}\label{Section:Manifold}
The first step in our approach 
is to project the problems into appropriate spaces of 
Fourier, Fourier-Taylor, and Chebyshev series coefficients.  This reduces the ordinary 
and partial differential equations describing periodic orbits, normal vector 
bundles, parameterized invariant manifolds, and connecting orbits
to systems of infinitely many algebraic equations.
By truncating these algebraic equations and applying 
numerical Newton schemes, we obtain the approximate solutions
around which we construct our a-posteriori arguments.

The first step is to study the order zero and one terms in $P$, the solution of equation \eqref{eq:parmMethod} expressed using a power series in its second variable.  That is, 
the periodic orbit, its Floquet multiplier and associated normal bundle.
We begin by looking for $\gamma$ periodic having Fourier expansion
\[
\gamma(t) = \sum_{k \in \mathbb{Z}} a_{0,k} e^{i k \omega t},
\]
where $a_{0,k} \in \mathbb{C}^n$ and $\omega = 2 \pi/T$. 
Here, the zero in the subscript denotes the fact that the periodic orbit
is the zero-th order term in stable/unstable manifold expansion
described in Section \ref{sec:intro}.

In application problems we are interested in real valued  $\gamma$, 
so that $a_{0,k} = \mbox{conj}(a_{0,-k})$ for all $k \in \mathbb{Z}$. 
Moreover $\gamma$ is analytic (as $f$ is analytic in a neighborhood of $\Gamma = \{ \gamma(t) : t \in [0,T] \}$), 
and the Fourier coefficients decay exponentially fast, and
there is a $\nu > 1$ so that $\{a_k\}_{k \in \mathbb{Z}} \in (\ell_\nu^1)^n$.

Noting that 
\[
\frac{d}{dt} \gamma(t) = \sum_{k \in \mathbb{Z}} i \omega k a_{0,k} e^{i \omega k t},
\]
and letting
\[
f(\gamma(t)) = \sum_{k \in \mathbb{Z}} q_k e^{i \omega k t},
\]
where the $q_k$ depend on the $a_{0,k}$, we have that a periodic orbit can be 
viewed as a solution of the equation
\[
\sum_{k \in \mathbb{Z}} i \omega k a_{0,k} e^{i \omega k t} = 
\sum_{k \in \mathbb{Z}} q_k e^{i \omega k t}.
\]
Since two smooth periodic functions are equal if and only if the 
coefficients of their Fourier expansions are equals,
matching like-terms leads to the infinite system of algebraic equations
\[
 i \omega k a_{0,k} e^{i \omega k t} -  q_k(a_0) = 0,
 \quad \quad \quad \quad k \in \mathbb{Z},
\]
where $a_0 = \{a_{0,k} \}_{k \in \mathbb{Z}}$. 

%When $f$ is a polynomial vector field, the functions $q_k$ are given by discrete convolutions
%%denoted by $*:\ellnu \times \ellnu \to \ellnu$ and such 
%as defined in Section \ref{sec:BanachSpaces}.

The problem requires the addition of a phase condition to remove infinitesimal 
rotations and isolate a unique solution in the space of Fourier coefficients. 
In this work, we choose a relaxed Poincar\'{e} type condition by defining a
co-dimension one plane in $\mathbb{R}^n$ and requiring that $\gamma(0)$
is in this plane.  
More precisely, we choose $\bar \gamma_0 \in \mathbb{R}^3$ and 
$\bar \gamma_1= f(\bar\gamma_0)$, defining 
\begin{equation}\label{eq:PoincareOrbit}
F_0^0(a_0)= \bar\gamma_1 -\sum_{i=1}^n \left( \bar\gamma_0^i \cdot \sum_{|k|<K_0} a_{0,k}^i \right).
\end{equation}
Taking $K_0 \in \mathbb{N}$ yields a finite dimensional condition in Fourier space.

%
%
%
% Fourier operator for periodic orbit
%
%
%

\begin{example}[Periodic solutions of the Lorenz system in Fourier space]\label{Ex:Orbit}
Finding a periodic orbit for the Lorenz system is equivalent finding a zero 
$(\omega, a_0^1, a_0^2, a_0^3) \in \mathbb{R} \times \ell_\nu^1 \times \ell_\nu^1 \times \ell_\nu^1$
 of the map $F_0 = (F_0^0, F_0^1, F_0^2, F_0^3)$ whose components are given by 
\begin{equation}\label{eq:OperatorOrbit}
F_0(\omega, a_0^1, a_0^2, a_0^3) =
\begin{pmatrix} 
F_0^0(a_0^1, a_0^2, a_0^3) \\
\left\{ - i \omega k a_{0,k}^1 +\sigma a_{0,k}^2 -\sigma a_{0,k}^1 : k \in \mathbb{Z} \right\} \\
\left\{ - i \omega k a_{0,k}^2 +\rho a_{0,k}^1 -a_{0,k}^2 - (a_0^1 \ast a_0^3)_k : k \in \mathbb{Z} \right\} \\
\left\{ - i \omega k a_{0,k}^3 -\beta a_{0,k}^3 +(a_0^1 \ast a_0^2)_k : k \in \mathbb{Z} \right\}
\end{pmatrix},
\end{equation}
where $F_0^0$ is as given in \eqref{eq:PoincareOrbit} with $n=3$. The equation $F = 0$ is studied in detail 
in \cite{HLM}, where a number of computer aided proofs for periodic orbits in the Lorenz system are 
implemented. In particular the truncated problem, the application of Theorem \ref{thm:contr}, 
and the operators $A$ and $A^{\dagger}$ introduced to compute the polynomial bound $Z(r)$ 
are discussed in detail. We employ these techniques in the present work as well.
\end{example}

\begin{example}[Periodic solutions of Hill's three-body problem in Fourier space]
In the case of Hill's three-body problem, we require two additional scalar phase conditions to isolate a unique solution. 
First, we impose the scalar constraint from Equation \eqref{eq:polyConstraint}, in order to guarantee
equivalence between the polynomial and non-polynomial problems.  Second, 
due to the fact that periodic solutions occur in one parameter families
parameterized by energy/frequency, we fix the value of the Jacobi integral to some 
$C \in \mathbb{R}$. Of course, to balance the resulting system, we must 
include two additional unfolding parameters into our zero finding problem.  
The unfolding parameters associated with the energy level and initial conditions
are discussed in detail in \cite{MR4208440,MR4576879}, and we use the 
same ideas below.

More precisely, set
\[
A_0^i \bydef \sum_{k \in\mathbb{Z}} a_{0,k}^i, \quad 1 \leq i \leq 5.
\]
Fixing $\bar{\gamma}_1, \bar{\gamma}_0^i \in \mathbb{R}^n$, for $i = 1, \ldots, 5$, 
and $C \in \mathbb{R}$ defining the desired energy level set,
the three phase conditions are encoded in the map $F_0^0$ defined by
\[
F_0^0(a_0^1,a_0^2,a_0^3,a_0^4,a_0^5) = \begin{pmatrix}
\bar\gamma_1 -\displaystyle \sum_{i=1}^5 \left( \bar\gamma_0^i \cdot \sum_{|k|<K_0} a_{0,k}^i \right) \\
(A_0^5)^2 \left[ \left(A_0^1 \right)^2 +\left( A_0^2 \right)^2 \right] - 1 \\
\lambda_2\cdot\left(A_0^1\right)^2 +\lambda_1\cdot\left(A_0^2\right)^2 -\left(A_0^3 \right)^2 -\left(A_0^4 \right)^2 +2\cdot A_0^5 -C
\end{pmatrix}.
\]
The additional scalar variables are included into the system through the map  
\[
G_0(\beta_1,\beta_2, a_0) = \begin{pmatrix}
0 \\
0 \\
0 \\
\left\{ \beta_1 a_{0,k}^1 : k \in \mathbb{Z} \right\} \\
\left\{ \beta_1 a_{0,k}^2 : k \in \mathbb{Z} \right\} \\
\left\{ \beta_1 a_{0,k}^3 : k \in \mathbb{Z} \right\} \\
\left\{ \beta_1 a_{0,k}^4 : k \in \mathbb{Z} \right\} \\
\left\{ \beta_1 a_{0,k}^5 +\beta_2(a_0^5 \ast a_0^5 \ast a_0^5)_k  : k \in \mathbb{Z} \right\}
\end{pmatrix}.
\]
Now our goal is to find $(\omega,\beta_1,\beta_2, a_0 ) \in \mathbb{R} \times \mathbb{R} \times \mathbb{R} \times (\ellnu)^5$ 
so that $F(\omega, \beta_1, \beta_2, a_0) = F_0(\omega, a_0) +G_0(\beta_1,\beta_2, a_0) = 0$, where $F_0$ is given by 
\begin{equation}\label{eq:OperatorOrbit}
F_0(\omega, a_0) =
\begin{pmatrix} 
F_0^0(a_0^1, a_0^2, a_0^3,a_0^4,a_0^5) \\
\left\{ - i \omega k a_{0,k}^1 +a_{0,k}^3 : k \in \mathbb{Z} \right\} \\
\left\{ - i \omega k a_{0,k}^2 +a_{0,k}^4 : k \in \mathbb{Z} \right\} \\
\left\{ - i \omega k a_{0,k}^3 +2a_{0,k}^4 +\lambda_2 a_{0,k}^1 -(a_0^1 \ast a_0^5 \ast a_0^5 \ast a_0^5)_k : k \in \mathbb{Z} \right\} \\
\left\{ - i \omega k a_{0,k}^4 -2a_{0,k}^3 +\lambda_1 a_{0,k}^2 -(a_0^2 \ast a_0^5 \ast a_0^5 \ast a_0^5)_k : k \in \mathbb{Z} \right\} \\
\left\{ - i \omega k a_{0,k}^5 -(a_0^1 \ast a_0^3 \ast a_0^5 \ast a_0^5 \ast a_0^5)_k -(a_0^2 \ast a_0^4 \ast a_0^5 \ast a_0^5 \ast a_0^5)_k : k \in \mathbb{Z} \right\}
\end{pmatrix}
\end{equation}
Moreover if $(\omega,\beta_1,\beta_2, a_0 )$ is a zero of $F$, then 
$\beta_1 = \beta_2 = 0$, so that we are actually solving Equation \eqref{eq:HillPoly}.
This can be shown precisely as in \cite{MR4208440,MR4576879}.
\end{example}

Next, we study the linear stability of the periodic orbit $\gamma(t)$ by finding an
invariant (stable/unstable) vector bundle $v(t)$ and associated Floquet exponent 
$\lambda \in \mathbb{C}$ for
$\gamma$.  The pair $(\lambda,v)$ solves the linear equation
\begin{equation}\label{eq:eigprob}
\dot{v}(t) + \lambda v(t) - A(t)v(t)=0,
\end{equation}
where $A(t) = Df(\gamma(t))$ is the differential of the vector field evaluated along the periodic orbit. 

In the present work we study periodic orbits with only one stable or unstable exponent.
Then the stable/unstable bundle provides a linear approximation of the local stable/unstable manifold 
attached to the periodic orbit $\gamma$. In the problems studied in this work, the stable/unstable
 bundles are periodic with period $T$. (This is the orientable case.  In the non-orientable case
 the bundle has period $2T$, a scenario not treated here but which would require only 
 a minor modification to the setting presented in the current section).

%
%
%
% The derivative of the vector field at gamma
%
%

\begin{example}[Normal bundles in the example system]\label{Ex:Bundle}
In the case of the Lorenz system, we have that 
\begin{equation*}
A(t)=Df(\gamma(t)) =
\left( \begin{array}{c c c}
-\sigma & \sigma & 0 \\
\rho -\gamma^{(3)}(t) & -1 & - \gamma^{(1)}(t) \\
\gamma^{(2)}(t) & \gamma^{(1)}(t) & -\beta 
\end{array} \right).
\end{equation*}
We note that, after expanding each periodic function as a Fourier series, the coordinates with products can be written with the use of Cauchy-Convolution products, such as introduced in Definition \ref{def:CCproduct}. It follows that the Fourier coefficients $a_1= (a_1^1,a_1^2,a_1^3)$ of $v(t)$ satisfying \eqref{eq:eigprob} are a zero of
\[
F_1(\lambda,a_1)= \begin{pmatrix} 
F_1^0(a_1) \\
\left\{ - i \omega k a_{1,k}^1 +\sigma a_{1,k}^2 -\sigma a_{1,k}^1 : k \in \mathbb{Z} \right\} \\
\left\{ - i \omega k a_{1,k}^2 +\rho a_{1,k}^1 -a_{1,k}^2 - (a^1 \star a^3)_{1,k} : k \in \mathbb{Z} \right\} \\
\left\{ - i \omega k a_{1,k}^3 -\beta a_{1,k}^3 +(a^1 \star a^2)_{1,k} : k \in \mathbb{Z} \right\}
\end{pmatrix}.
\]
We note that the presentation use $a_0$, the Fourier coefficients of the periodic orbit, to present the operator with Cauchy-Convolution product. But the coefficients $a_0$ are fully determined by solving the previous operator introduced in Example \ref{Ex:Orbit}. The first coordinate, $F_1^0(a_1)$, is a scalar equation fixing the magnitude of the tangent bundle at $t=0$ to guarantee that the desired solution is well isolated. This scalar restriction also has the effect of balancing the unknown eigenvalue $\lambda$. This equation can be relaxed to a finite dimensional restriction. In this work, we take
\[
F_1^0(a_1) = \sum_{i=1}^3 \left( \sum_{|k|<k_0} a_{1,k}^i \right)^2 -L,
\]
where $L$ is the desired magnitude of the initial value of $v$. The case of Hill's three-body problem will be presented in the form $A(t)h(t)$ for some $h:[0,T] \to \mathbb{R}^5$ to simplify the presentation. The first four coordinates are 
\begin{equation*}
\bigg(A(t)h(t)\bigg)^i =
\begin{cases}
h^3(t), & \mbox{if}~ i=1, \\
h^4(t), & \mbox{if}~ i=2, \\
\lambda_2 h^1(t) +2h^4(t) -h^1(t)\gamma^5(t)\gamma^5(t)\gamma^5(t) -3\gamma^{1}(t)\gamma^5(t)\gamma^5(t)h^5(t),& \mbox{if}~ i=3, \\
\lambda_1 h^2(t) -2h^3(t) -h^2(t)\gamma^5(t)\gamma^5(t)\gamma^5(t) -3\gamma^{2}(t)\gamma^5(t)\gamma^5(t)h^5(t),& \mbox{if}~ i=4, \\
\end{cases}
\end{equation*}
and the last coordinate is given by
\begin{align*}
\bigg(A(t)h(t)\bigg)^5 = & -h^1(t)\gamma^{3}(t)\gamma^5(t)\gamma^5(t)\gamma^5(t) -h^2(t)\gamma^{4}(t)\gamma^5(t)\gamma^5(t)\gamma^5(t) \\
                         & -h^3(t)\gamma^{1}(t)\gamma^5(t)\gamma^5(t)\gamma^5(t) -h^4(t)\gamma^{2}(t)\gamma^5(t)\gamma^5(t)\gamma^5(t) \\
                         &-3\gamma^1(t)\gamma^{3}(t)\gamma^5(t)\gamma^5(t)h^5(t) -3\gamma^2(t)\gamma^4(t)\gamma^5(t)\gamma^5(t)h^5(t). \\
\end{align*}
So that the Fourier coefficients $a_1= (a_1^1,a_1^2,a_1^3,a_1^4,a_1^5)$ of the tangent bundle for a periodic solution of Hill's three-body problem with Fourier coefficients $a_0$, previously computed using Example \ref{Ex:Orbit}, will be a zero of
\begin{equation}
F_1(\lambda, a_1) =
\begin{pmatrix} 
F_1^0(a_1) \\
\left\{ - i \omega k a_{1,k}^1 +a_{1,k}^3 : k \in \mathbb{Z} \right\} \\
\left\{ - i \omega k a_{1,k}^2 +a_{1,k}^4 : k \in \mathbb{Z} \right\} \\
\left\{ - i \omega k a_{1,k}^3 +2a_{1,k}^4 +\lambda_2 a_{1,k}^1 -(a^1 \star a^5 \star a^5 \star a^5)_k : k \in \mathbb{Z} \right\} \\
\left\{ - i \omega k a_{1,k}^4 -2a_{1,k}^3 +\lambda_1 a_{1,k}^2 -(a^2 \star a^5 \star a^5 \star a^5)_k : k \in \mathbb{Z} \right\} \\
\left\{ - i \omega k a_{1,k}^5 -(a^1 \star a^3 \star a^5 \star a^5 \star a^5)_{1,k} -(a^2 \star a^4 \star a^5 \star a^5 \star a^5)_{1,k} : k \in \mathbb{Z} \right\}
\end{pmatrix}.
\end{equation}
The scalar equation $F_1^0(a_1)$ is similar to what was presented in the case of Lorenz and has the same effect on the problem.
\end{example}

The next step is to compute local stable/unstable manifold parameterizations via the Parameterization 
method of \cite{MR3118249,MR3467671,MR3304254}, as discussed in Section \ref{sec:intro}.
That is, we seek
$P: \mathbb{T}_T \times [-1,1]\to\mathbb{R}^n$, where $\mathbb{T}_T = \mathbb{R}\backslash [0,T]$ and $n=3$ or $n=5$ for the two examples of this work, 
a smooth function with 
\begin{equation*}
  P(\theta,0) = \gamma(\theta), \quad \theta \in \mathbb{T}_\tau,
\end{equation*}
with
\begin{equation}\label{eq:eigenvector}
 \frac{\partial}{\partial \sigma} P(\theta,0) = v(\theta), \quad \theta \in \mathbb{T}_\tau,
\end{equation}
solving the the invariance equation of Equation \eqref{eq:parmMethod}.
%\begin{equation}\label{eq:invariance}
% \frac{\partial}{\partial \theta}P(\theta,\sigma) + \lambda\sigma \frac{\partial}{\partial \sigma}P(\theta,\sigma) = f(P(\theta,\sigma)), \quad (\theta,\sigma) \in \mathbb{T}_\tau \times [-1,1].
%\end{equation}
%The solution $P$ parameterizes the stable/unstable manifold of the periodic 
%orbit $\gamma$. 
%Higher dimensional cases are treated theoretically in 
%\cite{MR2177465,MR3118249}.  Implementations and applications of the parameterization method for 
%stable/unstable manifolds attached to periodic orbits are found in 
%\cite{MR3118249,MR4030405,MR4130403,MR3754682}
%and for higher dimensional manifolds 
% (more than one stable or unstable Floquet multiplier) in 
%\cite{MR3304254,MR4128684}.
%See also to the paper of \cite{MR4658475} 
%for computer assisted proofs involving higher dimensional 
%stable/unstable manifolds based on these ideas.  

We develop the solution of \eqref{eq:parmMethod} as as Taylor expansion
\begin{equation}\label{eq:TaylorExpansion}
 P(\theta,\sigma) = \sum_{\alpha=0}^\infty A_\alpha(\theta)\sigma^\alpha,
\end{equation}
where each coefficient $A_\alpha$ is a periodic function with period $T$. 
Moreover, we take $P$ defined for $\sigma \in [-1,1]$.
This is done by choosing the value $L$, introduced in Example \ref{Ex:Bundle}, 
to be small or large enough, as this choice
directly effects both the size of the image of $[-1,1]$ 
and also the decay rate of the coefficients.  

Heuristically, 
choosing $L$ so that the coefficients decay in such a way that the 
last coefficient has norm on the order of machine precision guarantees
that  $[-1,1]$ will make a useful domain.  This is because, 
we typically find that the error of the parameterization on $[-1,1]$
is of the same order as the size of the last coefficient computed.
Again, these are only heuristics which must be validated 
via the a-posteriori analysis to follow.

Now, each Taylor coefficient is expanded as a Fourier series
\[
A_\alpha(\theta)= \sum_{k \in \mathbb{Z}} a_{\alpha,k}e^{i\omega k\theta},
\]
and, summarizing what has been done so far, we have that 
\begin{equation*}
 P(\theta,\sigma) = \sum_{\alpha=0}^\infty A_\alpha(\theta)\sigma^\alpha = \sum_{\alpha=0}^\infty\sum_{k\in \mathbb{Z}} a_{\alpha,k} e^{\frac{2\pi \im}{T}k\theta}\sigma^\alpha = \sum_{\alpha=0}^\infty\sum_{k\in \mathbb{Z}} a_{\alpha,k} e^{\im \omega k\theta}\sigma^\alpha,
\end{equation*}
where $a_{\alpha,k} \in \mathbb{C}^n$ for all $\alpha,k$, and 
\begin{align*}
\|P\|_\infty &= \sup_{(t,\sigma)\in \mathbb{T} \times [-1,1]} \left| \sum_{\alpha=0}^\infty \sum_{k\in \mathbb{Z}} a_{\alpha,k} e^{i\omega k\theta} \sigma^\alpha \right| \\
&\leq \sum_{\alpha=0}^\infty \|a_\alpha\|_{1,\nu} < \infty. 
\end{align*}
We also remark that the coefficients $a_{0,k}$ and $a_{1,k}$ are the Fourier coefficients of the periodic 
solution and the tangent bundle respectively. Both of which can be computed by following Examples 
\ref{Ex:Orbit} and \ref{Ex:Bundle} in the problems of interest. We introduce the following notation, which 
is used throughout the remaining presentation.

%
%
% some notation 
%
%
\begin{definition}
We set $a= (a^1,a^2,\hdots,a^n)$ to represent the Fourier-Taylor coefficients of a parameterized 
manifold. In the case treated in this work, the 
parameterized manifold will be analytic. The expression 
$a_\alpha$ will denote the $\alpha$-th order coefficient of $P$, so that $a_\alpha \in (\ellnu)^n$ 
for all $\alpha \geq 0$ and $a^i$ denotes one coordinate of the Taylor coefficients. We have that $a^i \in X_\nu$ for all $1\leq i \leq n$, with $X_\nu$ as presented in Definition \ref{def:FourierTaylorSpace}.
\end{definition}

Again, in the applications considered in the present work,
 the image of this parameterization is real and the coefficients must satisfy
\begin{equation}\label{eq:SymmetryManidols}
	a_{\alpha,-k} = \mbox{conj}\left(a_{\alpha,k}\right), \quad \mbox{for all} \; \alpha\geq 2 \; \mbox{and} \; k \geq 0.
\end{equation}
We remark that the case $k=0$ provides that $a_{\alpha,0} \in \mathbb{R}$ for all $\alpha$. 
The vector field of interest being polynomial, the power series for $P$ from Equation \eqref{eq:TaylorExpansion} 
can be substituted into the invariance equation \eqref{eq:parmMethod}.
It is then possible to rearrange the problem 
and solve order by order in the space of Taylor coefficients.  
This procedure leads to the \textit{homological equation} describing 
the $\alpha$-th Taylor coefficient
 for all $\alpha \geq 2$, $A_\alpha(\theta)$, and given by the 
 following ordinary differential equation with constant coefficients
%
%
% Invariance higher Taylor coefficients
%
%
\begin{align}\label{eq:coefficients}
 \frac{d A_\alpha (\theta)}{d \theta} + \alpha \lambda A_\alpha (\theta) & = (f \circ P)_\alpha (\theta) \\
 &= Df(a_0) A_\alpha + R(A)_\alpha,
\end{align}
which, since the nonlinear remainder $R(a)_\alpha$ will not depend on $A_\alpha$,
is a linear equation for $A_\alpha$.
It is instructive to see how this equation appears in examples.
%
%
% Example
%
%
\begin{example}[The Fourier operator for Taylor coefficients of higher order]\label{Ex:Higher}
In the case of the Lorenz system, we have that
\begin{align*}
(f \circ P)_\alpha (\theta) &= \begin{pmatrix} 
\sigma A_\alpha^2(\theta) - \sigma A_\alpha^1(\theta) \\
\rho A_\alpha^1(\theta) - A_\alpha^2(\theta) -\displaystyle \sum_{\beta=0}^\alpha A_\beta^1(\theta) A_{\alpha-\beta}^3(\theta)\\
- \beta A_\alpha^3(\theta) +\displaystyle \sum_{\beta=0}^\alpha A_\beta^1(\theta) A_{\alpha-\beta}^2(\theta) \\
\end{pmatrix} \\
&= DF(A_0(\theta))A_\alpha(\theta) +\begin{pmatrix} 
0 \\
-\displaystyle \sum_{\beta=1}^{\alpha-1} A_\beta^1(\theta) A_{\alpha-\beta}^3(\theta)\\
 \displaystyle \sum_{\beta=1}^{\alpha-1} A_\beta^1(\theta) A_{\alpha-\beta}^2(\theta) \\
\end{pmatrix}.
\end{align*}
This shows that, for any $\alpha$, it is only required to know the coefficients of lower order in order to solve the differential equation. Hence, we can solve the system exactly up to any given order $N$.

Again, we can write the problem as an infinite system of algebraic 
equations in Fourier space, whose zero is the unique sequence of Fourier-Taylor 
coefficients of $P$ up to a chosen order $N$. For each $\alpha\geq 2$, the Fourier coefficients $a_\alpha$ of $A_\alpha(\theta)$ are a zero of $F_\alpha : (\ellnu)^3 \to (\ell_{\nu'}^1)^3$ given by
\begin{equation}\label{eq:OperatorHigher}
F_\alpha(a_\alpha) =
\begin{pmatrix} 
\left\{ (-i \omega k -\alpha\lambda) a_{\alpha,k}^1 + \sigma a_{\alpha,k}^2 - \sigma a_{\alpha,k}^1 : k \in \mathbb{Z} \right\} \\
\left\{ (-i \omega k -\alpha\lambda) a_{\alpha,k}^2 + \rho a_{\alpha,k}^1 - a_{\alpha,k}^2 - (a^1 \star a^3)_{\alpha,k} : k \in \mathbb{Z} \right\} \\
\left\{ (-i \omega k -\alpha\lambda) a_{\alpha,k}^3 - \beta a_{\alpha,k}^3 + (a^1 \star a^2)_{\alpha,k} : k \in \mathbb{Z}\right\}
\end{pmatrix}.
\end{equation}
We note that this system is fully determined by the choice made in
 Examples \ref{Ex:Orbit} and \ref{Ex:Bundle}, so that no phase variable or 
 scalar equation is required. For the Hill's three-body problem, 
 $F_\alpha : (\ellnu)^5 \to (\ell_{\nu'}^1)^5$ is defined as
\begin{equation*}
F_\alpha(a_\alpha) =
\begin{pmatrix} 
\left\{ (-i \omega k -\alpha\lambda) a_{\alpha,k}^1 +a_{\alpha,k}^3 : k \in \mathbb{Z} \right\} \\
\left\{ (-i \omega k -\alpha\lambda) a_{\alpha,k}^2 +a_{\alpha,k}^4 : k \in \mathbb{Z} \right\} \\
\left\{ (-i \omega k -\alpha\lambda) a_{\alpha,k}^3 +2a_{\alpha,k}^4 +\lambda_2 a_{\alpha,k}^1 -(a^1 \star a^5 \star a^5 \star a^5)_{\alpha,k} : k \in \mathbb{Z} \right\} \\
\left\{ (-i \omega k -\alpha\lambda) a_{\alpha,k}^4 -2a_{\alpha,k}^3 +\lambda_1 a_{\alpha,k}^2 -(a^2 \star a^5 \star a^5 \star a^5)_{\alpha,k} : k \in \mathbb{Z} \right\} \\
\left\{ (-i \omega k -\alpha\lambda) a_{\alpha,k}^5 -(a^1 \star a^3 \star a^5 \star a^5 \star a^5)_{\alpha,k} -(a^2 \star a^4 \star a^5 \star a^5 \star a^5)_{\alpha,k} : k \in \mathbb{Z} \right\}
\end{pmatrix}.
\end{equation*}
\end{example}

After the series is computed to Taylor order $N$, 
the truncated approximation is equipped with validated error bounds
 using a fixed point argument, which will require some additional notation.
 \begin{definition}
Fix $N$ and define $X_\nu^N$ as
\[
X_\nu^N = \left\{ x \in X_\nu : \left\|x_\alpha\right\|_{1,\nu} = 0, \forall \alpha \geq N \right\}.
\]
We note that this defines a subspace of $X_\nu$.
\end{definition}
While the subspace $X_\nu^N$ just introduced is not closed under the 
Cauchy-Convolution product $\star$, we can truncate the products and
define polynomial operations from $X_\nu^N$ into itself. 
We set $F: \mathbb{R}^k \times (X_\nu^N)^n \to \mathbb{R}^k \times (X_{\nu'}^N)^n$ as
\[
F= \left( F_0, F_1, \hdots, F_{N-1},0,0,\hdots\right)
\]
using each coefficient $F_\alpha$ as defined in Examples \ref{Ex:Orbit}, \ref{Ex:Bundle}, 
and \ref{Ex:Higher} for $\alpha=0$, $\alpha=1$, and $2 \leq \alpha < N $ respectively. 
The number of scalar variables $k$ depends on the phase condition chosen to compute the 
periodic orbit and the tangent bundle. So that $k=2$ for Lorenz and $k=4$ for Hill's three-body 
problem.  We compute $\bar a^N$ a finite dimensional approximation of each of the 
$n\cdot N$ Fourier sequence and use the Radii polynomial approach, presented in Section \ref{Radii} ,
to obtain a constant $r^N >0$ such that 
\begin{align}\label{eq:BoundonCandX}
 \sum_{\alpha=0}^{N-1} \| a_\alpha - \bar a_\alpha \|_{1,\nu} \leq r^N.
\end{align}
Since the $\ellnu$ norm provides an upper bound on the $C^0$ norm, if follows that
\begin{align}
\|P - P^N\|_\infty &\leq \sum_{\alpha=0}^{N-1} \|A_\alpha - \bar A_\alpha\|_\infty + \left\| \sum_{\alpha=N}^\infty  A_\alpha \right\| \\
&\leq r^N + \left\| \sum_{\alpha=N}^\infty  A_\alpha \right\|. \label{eq:BoundTailUnknown}
\end{align}

For further detail regarding the application of the Radii polynomial approach, we refer the interested reader to \cite{MR3612178} and its application to similar examples in \cite{HLM,MR4658475,MR3545977}. Now, we aim to compute an upper bound for the remaining sum.

%
%
%
% Tail argument
%
%
%
\subsection{Contraction Argument for the tail of the Fourier-Taylor expansion} \label{sec:radPolyParmMethod}

We first rewrite the invariance equation as a fixed-point problem
for $a_\alpha = F_\alpha(a)$ for all $\alpha \geq N$. We will see that, 
if $N \in \mathbb{N}$ is large enough, the resulting fixed point 
problem is a contraction near the zero tail.  It follows that there exists
a unique small tail so that the approximate solution plus this tail is the 
true invariant manifold parameterization.

Recall that the coefficients $a_\alpha$ 
satisfy the homological Equation \eqref{eq:coefficients}, which can be rewritten as
\begin{align}\label{eq:InvarianceFourier}
\mathcal{L}_\alpha(a_\alpha) +DF(a_0)a_\alpha = R_\alpha(a)
\end{align}
where $R_\alpha$ depends on the vector field of choice, 
and $\mathcal{L}_\alpha: (\ellnu)^n \to (\ell_{\nu'}^1)^n$ is given by
\[
\mathcal{L}_\alpha(a_\alpha)=  \bigg\{ (-i \omega k -\alpha\lambda)a_{\alpha,k} : k \in \mathbb{Z} \bigg\}, ~\forall \alpha.
\]
We introduce some definitions which simplify the presentation. The explicit 
definition of $R$ in the case of the Lorenz system and Hill's three body problem 
are given below.

\begin{definition}
We employ the decomposition 
$X_\nu= X_\nu^N + X_\nu^\infty$ where $X_\nu^N$ is as defined before, and 
\[
X_\nu^\infty = \left\{ a \in X_\nu : ~a_\alpha = 0, ~\forall \alpha < N \right\}.
\]
\end{definition}

\begin{definition}
For $x \in X_\nu^\infty$, define the norm
\[
\|x\|_{X_\nu^\infty} = \sum_{\alpha=N}^\infty \|x_\alpha\|_{1,\nu}.
\]
For $x \in (X_\nu^\infty)^n = (X_\nu^\infty,X_\nu^\infty,\hdots, X_\nu^\infty)$, we take
\[
\|x\|_{\left(X_\nu^\infty \right)^n} = \max_{1\leq i \leq n}  \|x^i\|_{X_\nu^\infty}.
\]
\end{definition}

%
%
% Example R explicit
%
%
\begin{example}
Recall the form of $f\circ P$ in Example \ref{Ex:Higher}, so that
$R_\alpha : X_\nu^3 \to (\ellnu)^3$, the right-hand side of 
\eqref{eq:InvarianceFourier} in the case of the Lorenz system, has
\begin{align}
R_\alpha(a) = \begin{pmatrix}
0 \\
\displaystyle\sum_{\substack{\alpha_1 + \alpha_2 = \alpha\\ \alpha_1,\alpha_2 > 0}} \sum_{\substack{k_1+k_2 = k\\ k_1,k_2 \in \mathbb{Z}}} a_{\alpha_1,k_1}^{1}a_{\alpha_2,k_2}^{3} \\
-\displaystyle \sum_{\substack{\alpha_1 + \alpha_2 = \alpha\\ \alpha_1,\alpha_2 > 0}} \sum_{\substack{k_1+k_2 = k\\ k_1,k_2 \in \mathbb{Z}}} a_{\alpha_1,k_1}^{1}a_{\alpha_2,k_2}^{2}
\end{pmatrix} \bydef \begin{pmatrix}
0 \\
0 \\
(a^1 \hat \star a^3)_\alpha \\
-(a^1 \hat \star a^2)_\alpha
\end{pmatrix}.
\end{align}
Note that each summation is the Cauchy-Convolution product from which the zeroth order terms have been 
removed and will be denoted $\hat \star$ to follow Definition \eqref{def:CCproduct}. In the case of Hill's three body problem, we have 
$R_\alpha : X_\nu^5 \to (\ellnu)^5$ and
\begin{align}
R_\alpha(a) =  -\begin{pmatrix}
0 \\
0 \\
(a^1 \hat \star a^5 \hat \star a^5 \hat \star a^5)_\alpha \\
(a^2 \hat \star a^5 \hat \star a^5 \hat \star a^5)_\alpha \\
(a^1 \hat \star a^3 \hat \star a^5 \hat \star a^5 \hat \star a^5)_\alpha 
+ (a^2 \hat \star a^4 \hat \star a^5 \hat \star a^5 \hat \star a^5)_\alpha
\end{pmatrix}.
\end{align}
\end{example}

Now, we note that $\mathcal{L}_\alpha$ is invertible for all $\alpha$, and that the 
formal inverse is 
\[
\mathcal{L}_\alpha^{-1}(a_\alpha)= 
\left\{ \frac{a_{\alpha,k}}{(-i \omega k -\alpha\lambda)} : k \in \mathbb{Z} \right\}.
\]
Let $I_{(\ellnu)^n}$ denote the identity operator on $(\ellnu)^n$. Applying 
$\mathcal{L}_\alpha^{-1}$ to both sides of \eqref{eq:InvarianceFourier} yields
%
%
% labeled equation
%
%
\begin{align}\label{eq:InvarianceSmoothed}
\left[ I_{(\ellnu)^n} + \mathcal{L}_\alpha^{-1} \circ DF(a_0) \right] a_\alpha = \mathcal{L}_\alpha^{-1} \circ R_\alpha(a).
\end{align}
This suggests that the inverse of the left-hand side can be expressed as a 
Neumann series. Indeed, this works for large values of $\alpha$, and the finitely 
many cases where it fails are treated by further splitting the problem and employing
a numerical approximate inverse. This argument is completed in the remainder of this section
and the results are summarized in the following theorem.
%
%
% G invertible theorem
%
%
\begin{theorem}\label{Thm:InverseG}
For $\alpha \geq N$, consider
\[
\left[ I_{(\ellnu)^n} +\mathcal{L}_\alpha^{-1} D\mathcal{F}_0(a_0)\right] a_\alpha = \mathcal{L}_\alpha^{-1}R_\alpha(a) .
\]
Then, for all $\alpha \geq N$, there exists a invertible and linear operator $\mathcal{G}_{\alpha}: (\ellnu)^n \to (\ellnu)^n$ such that  $a_\alpha$ satisfies
\[
a_\alpha  = \mathcal{G}_\alpha^{-1} R_\alpha(a),~\mbox{for all}~ \alpha \geq N,
\]
and the operator $\mathcal{G}_\alpha^{-1} R_\alpha :  (\ellnu)^n \to (\ellnu)^n $ is Fr\'{e}chet differentiable for each $\alpha$.
\end{theorem}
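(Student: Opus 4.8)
The plan is to reduce the statement to the invertibility, for each $\alpha \geq N$, of the bounded operator $\mathcal{H}_\alpha := I_{(\ellnu)^n} + \mathcal{L}_\alpha^{-1} D\mathcal{F}_0(a_0)$ on $(\ellnu)^n$, and then to read off both $\mathcal{G}_\alpha$ and the differentiability claim. Indeed, if $\mathcal{H}_\alpha$ is boundedly invertible then $\mathcal{G}_\alpha := \mathcal{L}_\alpha + D\mathcal{F}_0(a_0) = \mathcal{L}_\alpha \mathcal{H}_\alpha$ has the bounded (though $\mathcal{G}_\alpha$ itself is unbounded, being densely defined on $(\ellnu)^n$) inverse $\mathcal{G}_\alpha^{-1} = \mathcal{H}_\alpha^{-1} \mathcal{L}_\alpha^{-1}$ — here one uses that $\mathcal{L}_\alpha^{-1}$, being a diagonal Fourier multiplier that only divides, is itself bounded on $(\ellnu)^n$ — and left–composing Equation \eqref{eq:InvarianceSmoothed} with $\mathcal{H}_\alpha^{-1}$ yields exactly $a_\alpha = \mathcal{G}_\alpha^{-1} R_\alpha(a)$. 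Fr\'{e}chet differentiability is then automatic: for fixed $\alpha$, $R_\alpha$ is a finite sum of truncated Cauchy–convolution products of the coordinate sequences (see the examples above and Definition \ref{def:CCproduct}), hence a bounded homogeneous polynomial map on $(\ellnu)^n$ by the Banach algebra property of $\ellnu$, so it is real analytic; post–composing with the bounded linear operator $\mathcal{G}_\alpha^{-1}$ preserves this.

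The workhorse estimate is a bound on $\mathcal{L}_\alpha^{-1}$. Since $\lambda$ has nonzero real part, for every $k \in \mathbb{Z}$,
\[
\left| -i\omega k - \alpha\lambda \right| \;\ge\; \bigl|\operatorname{Re}(-i\omega k - \alpha\lambda)\bigr| \;=\; \alpha\,|\operatorname{Re}\lambda|,
\]
and because $\mathcal{L}_\alpha^{-1}$ acts diagonally, its operator norm on $(\ellnu)^n$ is the supremum over $k$ of $|{-i\omega k - \alpha\lambda}|^{-1}$, hence at most $(\alpha\,|\operatorname{Re}\lambda|)^{-1}$. The operator $D\mathcal{F}_0(a_0)$ is bounded on $(\ellnu)^n$ — it consists of constant-coefficient linear terms together with convolutions against the fixed sequences $a_0$ — so write $\|D\mathcal{F}_0(a_0)\| \leq M$, a single constant independent of $\alpha$. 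Then $\|\mathcal{L}_\alpha^{-1} D\mathcal{F}_0(a_0)\| \leq M/(\alpha\,|\operatorname{Re}\lambda|)$, which is strictly less than one as soon as $\alpha > \alpha_* := M/|\operatorname{Re}\lambda|$. For such $\alpha$ the Neumann series for $\mathcal{H}_\alpha^{-1}$ converges in operator norm, giving invertibility together with $\|\mathcal{H}_\alpha^{-1}\| \leq (1 - M/(\alpha|\operatorname{Re}\lambda|))^{-1}$; note this bound is uniform in $\alpha$ and in fact decreasing, which is what the tail contraction argument of Section \ref{sec:radPolyParmMethod} will ultimately need.

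It remains to treat the finitely many indices $\alpha$ with $N \leq \alpha \leq \alpha_*$ (a nonempty set only if $N$ is not chosen past $\alpha_*$). For each such $\alpha$, I would argue as in the radii–polynomial framework of Section \ref{Radii}, mirroring the use of the operators $A$ and $A^\dagger$ in Example \ref{Ex:Orbit}: split $(\ellnu)^n$ by a finite Fourier truncation at a mode $K_\alpha$ chosen large enough that $\omega K_\alpha$ dominates $M$, so that on the tail block $\mathcal{H}_\alpha$ is handled by the same Neumann estimate as above; on the finite head block, compute with interval arithmetic a numerical approximate inverse $A_\alpha^\dagger$ of the truncated operator; assemble the block operator $A_\alpha$ (equal to $A_\alpha^\dagger$ on the head and to the Neumann inverse on the tail) and verify the single inequality $\|I_{(\ellnu)^n} - A_\alpha \mathcal{H}_\alpha\| < 1$. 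A successful verification gives invertibility of $\mathcal{H}_\alpha$, hence of $\mathcal{G}_\alpha$, with $\mathcal{G}_\alpha^{-1} = (A_\alpha\mathcal{H}_\alpha)^{-1}A_\alpha\mathcal{L}_\alpha^{-1}$. (As a consistency check: $\mathcal{L}_\alpha^{-1}$ is a diagonal multiplier tending to zero, hence compact, so $\mathcal{L}_\alpha^{-1}D\mathcal{F}_0(a_0)$ is compact and $\mathcal{H}_\alpha$ is Fredholm of index zero, i.e. invertible iff injective — and the computation above certifies precisely this.) Since only finitely many $\alpha$ are involved, the proof is complete.

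I expect the main obstacle to lie in that last step. For the finitely many near-resonant indices, invertibility is not an a priori analytic fact but a genuinely computer-assisted statement, so one must actually carry out the interval-arithmetic inversion of the head block, and — more delicately — choose the truncation dimensions $K_\alpha$ so that the analytic tail estimate and the numerical head inversion close simultaneously. A secondary, purely bookkeeping annoyance is keeping the domain and codomain spaces straight ($\ellnu$ versus $\ell_{\nu'}^1$) through the various compositions with $\mathcal{L}_\alpha$ and $\mathcal{L}_\alpha^{-1}$, and confirming that the composite $\mathcal{G}_\alpha^{-1}R_\alpha$ indeed maps $(\ellnu)^n$ back into itself, as the statement asserts.
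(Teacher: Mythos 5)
Your proposal is correct and follows essentially the same route as the paper: the same Neumann-series bound $\|\mathcal{L}_\alpha^{-1} DF(a_0)\| \leq K/(\alpha|\mathrm{Re}(\lambda)|) < 1$ for all sufficiently large $\alpha$, the same approximate-inverse-plus-certified-defect treatment of the finitely many exceptional indices in $\mathcal{A}$ (the paper writes this as the splitting $D_\alpha^m + D_\alpha^\infty$ with numerical inverse $D_\alpha^\dagger$ and the condition $\epsilon_{\alpha,2} + \|D_\alpha^\dagger\|\,\epsilon_{\alpha,1} < 1$, which is your $\|I - A_\alpha\mathcal{H}_\alpha\| < 1$ in slightly different clothing), and the same observation that differentiability of $\mathcal{G}_\alpha^{-1}R_\alpha$ follows from linearity of $\mathcal{G}_\alpha^{-1}$ and the polynomial (Banach-algebra) structure of $R_\alpha$. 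Your Fredholm/compactness consistency check is a harmless addition not present in the paper.
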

%
%
%
% start of proof
%
%
%
The differentiability of the composition follows directly from the 
linearity of $\mathcal{G}_\alpha^{-1}$, and the definition of $R_\alpha$. 
The desired result follows directly whenever 
\[
\left\|  \mathcal{L}_\alpha^{-1} DF(a_0) \right\|_{B(\left(\ellnu \right)^n)}<1,
\] 
in which case $\mathcal{G}_\alpha = I_{(\ellnu)^n} +\mathcal{L}_\alpha^{-1} D\mathcal{F}_0(a_0)$,
 and we invert using a Neumann series argument. Both operators $\mathcal{L}_\alpha^{-1}$ 
 and $DF(a_0)$ can be bound explicitly, and the condition is verified with computer
 assistance. After a bound is obtained for both operators, we can locate
 the first value $\alpha$ for which the Neumann condition is satisfied. 
Note that
\[
\left\|  \mathcal{L}_\alpha^{-1} \right\|_{B(\left(\ellnu \right)^n)}
\leq \frac{1}{\left|\alpha\mbox{Re}(\lambda)\right|},
\]
and there exist $K>0$ so that $\left\| DF(a_0) \right\|_{B(\left(\ellnu \right)^n)}  < K$.
Then for all $\alpha > \frac{K}{\left|\mbox{Re}(\lambda)\right|}$, 
the left hand side of equation \eqref{eq:InvarianceSmoothed} is inverted using a Neumann series
as desired. In this case, it follows that
\[
\left\| \mathcal{G}_\alpha^{-1} \right\|_{B(\left(\ellnu \right)^3)} < \frac{1}{1-\frac{K}{\left|\alpha \mbox{Re}( \lambda)\right|}} < \frac{1}{1-\frac{K}{M\left|\mbox{Re}( \lambda)\right|}},
\]
where $M$ denotes the ceiling of $\frac{K}{\left| \mbox{Re}(\lambda) \right|}$, 
the first value for which the condition is met. 

While this argument yields the desired conclusion 
for infinitely many values of $\alpha$, it might not work for all the values of interest. Indeed, 
it is possible that $M>N$. Let the set 
\[
\mathcal{A} = \left\{ \alpha \in \mathbb{Z} ~\vert~  N \leq \alpha < M  \right\},
\] 
collect the multi-indices of each Taylor coefficients in the tail so that the previous argument fails. 
If this set is empty, we simply rewrite the invariance equation into the form 
$a_\alpha= F_\alpha(a)$ for all $\alpha \geq N$ and no further work is required to 
prove Theorem \ref{Thm:InverseG}. 

If the set is non-empty, an alternative argument is introduced to justify that the invariance
 equation can always take an equivalent form $a_\alpha= \mathcal{G}_\alpha^{-1}R_\alpha(a)$.  
To this end, for each order in $\mathcal{A}$, we use the numerically computed
approximate inverse to obtain a bound on the norm of the inverse in a case-by-case manor. 
First, we use the numerical approximation to write $DF(a_0)$ as a sum. 
We set $a_0^\infty = a_0 - \bar a_0$, so that
\[
\| a_0^\infty \|_{1,\nu} \leq r^N.
\]
Note that $\bar a_0$ contains non-zero terms only up to some finite order, 
say $m$. We take advantage of this, and the fact that $F$ is polynomial to 
construct $D_\alpha^m,D_\alpha^\infty :(\ellnu)^n \to (\ellnu)^n$ an eventually zero 
operator that approximates $\mathcal{L}_\alpha^{-1}DF(\bar a_0)$ 
up to order $m$ and $D_\alpha^\infty $ its complement. Then, 
we choose an $\epsilon_{\alpha,1}$ having
\[
\left\| D_\alpha^\infty \right\|_{B((\ellnu)^n)} \leq  \epsilon_{\alpha,1}. 
\]
The following example illustrates the idea in the case of the Lorenz system. 
%
%
%
% Lorenz splitting of DF example
%
%
%
%
\begin{example}
Since we seek
\[
 \mathcal{L}_\alpha^{-1} \circ DF(a_0)h = D_\alpha^mh +D_\alpha^\infty h, ~\forall h \in \left( \ellnu\right)^3,
\]
let us write $h=(h^1,h^2,h^3)$, so that
\[
\left(\mathcal{L}_\alpha^{-1} \circ DF( a_0)h \right)_k = \begin{pmatrix}
\frac{\sigma}{(-i \omega k -\alpha\lambda)} (h_k^2-h_k^1) \\
\frac{1}{(-i \omega k -\alpha\lambda)}\left[ \rho h_k^1 -h_k^2 -(h^1 * a_0^3 )_k +(h^3 * a_0^1)_k \right] \\
\frac{1}{(-i \omega k -\alpha\lambda)}\left[ -\beta h_k^3 +(h^1 * a_0^2)_k +(h^2 * a_0^1)_k \right]
\end{pmatrix}.
\] 
Let $\bar h$  represent the truncation of $h$ to the same order as $\bar a_0$, 
and $\tilde h = h - \bar h$. Define
\begin{align*}
\left( D_\alpha^m h \right)_k = \begin{pmatrix}
\frac{\sigma}{(-i \omega k -\alpha\lambda)} (\bar h_k^2- \bar h_k^1) \\
\frac{1}{(-i \omega k -\alpha\lambda)}\left[ \rho \bar h_k^1 - \bar h_k^2 -( \bar h^1 * \bar a_0^3 )_k +( \bar h^3 * \bar a_0^1)_k \right] \\
\frac{1}{(-i \omega k -\alpha\lambda)}\left[ -\beta \bar h_k^3 +( \bar h^1 * \bar a_0^2)_k +(\bar h^2 *\bar a_0^1)_k \right]
\end{pmatrix}.
\end{align*}
We note that, by construction, $\left( D_\alpha^m h \right)_k = 0$ if $|k|>2m$. Moreover
\begin{align*}
\left( D_\alpha^\infty h \right)_k = &\begin{pmatrix}
\frac{\sigma}{(-i \omega k -\alpha\lambda)} ( \tilde h_k^2- \tilde h_k^1) \\
\frac{1}{(-i \omega k -\alpha\lambda)}\left[ \rho \tilde h_k^1 
- \tilde h_k^2 -( \tilde h^1 * \bar a_0^3 )_k +( \tilde h^3 * \bar a_0^1)_k \right] \\
\frac{1}{(-i \omega k -\alpha\lambda)}\left[ -\beta \tilde h_k^3 
+( \tilde h^1 * \bar a_0^2)_k +(\tilde h^2 *\bar a_0^1)_k \right]
\end{pmatrix} \\
&+ \begin{pmatrix}
0 \\
\frac{1}{(-i \omega k -\alpha\lambda)}\left[ -( \tilde h^1 * (a_0^\infty)^3 )_k
 +( \tilde h^3 * (a_0^\infty)^1)_k \right] \\
\frac{1}{(-i \omega k -\alpha\lambda)}\left[ ( \tilde h^1 * (a_0^\infty)^2)_k 
+( \tilde h^2 * (a_0^\infty)^1)_k \right]
\end{pmatrix}.
\end{align*}
The first term needs careful consideration for terms of lower order, 
while the term on the second line is bounded using Banach algebra estimates.

So, let 
\[
d_\alpha^\infty = \sup_{|k| \geq m} \left| \frac{1}{(-i \omega k -\alpha\lambda)}  \right|, ~\mbox{and}~ d_\alpha = \sup_{k \in \mathbb{Z}} \left| \frac{1}{(-i \omega k -\alpha\lambda)}  \right|.
\]
We use the dual estimates from Corollary 3 of Section 4 of \cite{HLM}
to compute $v_k$, bounding the $k-$th entry of each convolution product of order $|k|<m$. 
That is, it satisfies
\begin{align*}
\left| ( \tilde h^1 * \bar a_0^3 )_k \right| + \left|( \tilde h^3 * \bar a_0^1)_k\right| &\leq v_k,~\mbox{and} \\
\left| ( \tilde h^1 * \bar a_0^2)_k\right| + \left|(\tilde h^2 *\bar a_0^1)_k\right| &\leq v_k.
\end{align*}
This allows to bound the norm of the first term up to finite order $m$.
The tails of the summation are then bounded using
\[
\bar K \bydef \max \left\{ 2\sigma,\rho +1 
+\|\bar a_0^3\|_{1,\nu} +\|\bar a_0^1\|_{1,\nu}, \beta 
+\|\bar a_0^2\|_{1,\nu} +\|\bar a_0^1\|_{1,\nu} \right\}. 
\]
Finally, gathering the estimates together, we have that
\begin{align*}
\left\| D_\alpha^\infty \right\|_{B((\ellnu)^3} &\leq  d_\alpha^\infty\cdot \bar K
 +d_\alpha \cdot 2r^N + \sum_{|k|<m} \frac{v_k}{\left| -i \omega k -\alpha\lambda \right|} \nu^{|k|}  \\
&= \epsilon_{\alpha,1}.
\end{align*}

We remark that the bound on the first term is a bound on $DF(\bar a_0)$, 
and that this generates a bound similar to the
$\mathcal{Z}_1$ for the computer-assisted validation of the periodic orbit. 
This approach applies to the Hill three body problem, 
however the higher degree of the nonlinearities leads to even lengthier estimates,
which we suppress. 
\end{example}

The term $D_\alpha^mh$ is finite dimensional, so that $I_{(\ellnu)^3} + D_\alpha^m$ 
is eventually diagonal and its invertibility depends only on the invertibility of the finite part, 
which can be verified with computer assistance. 
Let $D_\alpha^\dagger$ denote the numerical approximation of the desired inverse, 
so that it is possible to use interval arithmetic to find a positive 
constant $\epsilon_{\alpha,2}$ such that
\[
\left\| \mathcal{E}_\alpha \right\|_{B((\ellnu)^n)} < \epsilon_{\alpha,2},
\]
where $\mathcal{E}_\alpha : (\ellnu)^n \to (\ellnu)^n$ is the error arising from
 the numerical inverse, defined by
\[
\mathcal{E}_\alpha= I_{(\ellnu)^n} - D_\alpha^\dagger \left( I_{(\ellnu)^n} +D_\alpha^m \right).
\]

Rewriting equation \eqref{eq:InvarianceSmoothed}, we have that
\begin{align*}
D_\alpha^\dagger \circ \mathcal{L}_\alpha^{-1} \circ R_\alpha(a) &= D_\alpha^\dagger\left[ I_{(\ellnu)^3} +\mathcal{L}_\alpha^{-1} DF(a_0) \right] a_\alpha \\
&= D_\alpha^\dagger\left[ I_{(\ellnu)^3} + D_\alpha^m +D_\alpha^\infty \right] a_\alpha \\
&= \left[ D_\alpha^\dagger\left( I_{(\ellnu)^3} + D_\alpha^m \right) +D_\alpha^\dagger D_\alpha^\infty \right] a_\alpha \\
&= \left[ I_{(\ellnu)^3} -\mathcal{E}_\alpha  +D_\alpha^\dagger D_\alpha^\infty \right] a_\alpha \\
&= \left[ I_{(\ellnu)^3} - \left( \mathcal{E}_\alpha  - D_\alpha^\dagger D_\alpha^\infty \right) \right] a_\alpha \\
\end{align*}
This formulation provides a condition which is verified using the estimates above, and 
completing the proof of Theorem \ref{Thm:InverseG}. We note that
\[
\left\| \mathcal{E}_\alpha  - D_\alpha^\dagger D_\alpha^\infty \right\|_{B((\ellnu)^n)} 
\leq \epsilon_{\alpha,2} + \epsilon_{\alpha,1} \cdot \left\| D_\alpha^\dagger \right\|_{B((\ellnu)^n)},
\]
but $D_\alpha^\dagger $ is eventually diagonal, so that its norm is bound via Corollary \ref{corollary:BoundEventuallyDiagonal}, and the condition
\begin{align}\label{eq:CriteriaFinite}
\epsilon_{\alpha,2} + \left\| D_\alpha^\dagger \right\|_{B((\ellnu)^3)}
\cdot \epsilon_{\alpha,1} < 1,
\end{align}
is verified with computer assistance. It follows that 
\[
\left[ I_{(\ellnu)^3} - \left( \mathcal{E}_\alpha  - 
D_\alpha^\dagger D_\alpha^\infty \right) \right] a_\alpha = 
D_\alpha^\dagger \circ \mathcal{L}_\alpha^{-1} \circ R_\alpha(a),
\]
can be rewritten in the form $a_\alpha = F_\alpha(a)$ for all $\alpha \in \mathcal{A}$ if \eqref{eq:CriteriaFinite} is satisfied. Hence, for all $\alpha \in \mathcal{A}$,
 if \eqref{eq:CriteriaFinite} holds then
\[
I_{(\ellnu)^3} - \left( \mathcal{E}_\alpha  - D_\alpha^\dagger D_\alpha^\infty \right)
\]
is invertible and
\[
\left\| \left[ I_{(\ellnu)^3} - \left( \mathcal{E}_\alpha  - 
D_\alpha^\dagger D_\alpha^\infty \right)\right]^{-1} \right\|_{B((\ellnu)^3)}
 < \frac{1}{1-\epsilon_{\alpha,2} - \left\| D_\alpha^\dagger 
 \right\|_{B((\ellnu)^3)} \cdot \epsilon_{\alpha,1}}.
\]

If this argument succeeds for all values $\alpha \in \mathcal{A}$, then the proof of Theorem \ref{Thm:InverseG} is complete with 
\[
\mathcal{G}_\alpha^{-1} = \left[ I_{(\ellnu)^3} - \left( \mathcal{E}_\alpha  
- D_\alpha^\dagger D_\alpha^\infty \right)\right]^{-1} 
D_\alpha^\dagger \mathcal{L}_\alpha^{-1},
\]
for $\alpha \in \mathcal{A}$. We note that the proof provides
\begin{align}\label{eq:BoundInverse}
\left\| \mathcal{G}_\alpha^{-1} \right\|_{B((\ellnu)^n)} \leq 
\max \left( \frac{1}{M|\lambda|-K} , \max_{\alpha \in \mathcal{A}}
 \left[ \frac{\left\| D_\alpha^\dagger \right\|_{B((\ellnu)^3)}}{|\alpha\lambda| 
 \left( 1-\epsilon_{\alpha,2} - \left\| D_\alpha^\dagger \right\|_{B((\ellnu)^3)} 
 \cdot \epsilon_{\alpha,1} \right)}  \right]\right) \bydef B_g,  
\end{align}
for all $|\alpha| > N$. 

We now use Theorem \ref{Thm:InverseG} to formulate a fixed 
point argument and complete the computation of the truncation error associated to 
a finite dimensional truncation of the parameterization $P$. 
The centerpiece of the argument is to use Theorem \ref{Radii} in the 
case $\bar a = 0$.  This application combines Theorems \ref{thm:contr}  and \ref{Thm:InverseG}.

\begin{corollary}\label{cor:ResultTail}
Let $a^N \in X^N$ denote the Fourier-Taylor coefficients of the solution of equation \eqref{eq:InvarianceFourier} for all $\alpha < N$, and 
define $T: X_\nu^\infty \to X_\nu^\infty$ by
\[
T(x)= \left\{ \mathcal{G}_\alpha R_\alpha (a^N +x) : |\alpha| \geq N \right\}, \forall x \in X_\nu^\infty.
\] 
Assume that $Y$ is a positive constant and $Z:(0,r_*) \to [0,\infty)$ is a non-negative function satisfying
\begin{align*}
\sum_{\alpha=N}^\infty \left\|R_\alpha(0) \right\|_{(\ellnu)^n}&\leq Y \\
\sup_{x \in \overline{B_r(0)}} \sum_{\alpha=N}^\infty \left\|DR_\alpha(x) 
\right\|_{\mathcal{B}((\ellnu)^n)} &\leq Z(r), \quad \mbox{for all} ~r \in (0,r_*) \\
\end{align*}
If there exists $r^\infty \in (0,r_*)$ such that $ B_G\left( Y +Z(r^\infty)r^\infty \right) <r^\infty$, 
then there exists $a^\infty \in B_{r^\infty}(0) $ such that $T(a^\infty) = a^\infty$.
\end{corollary}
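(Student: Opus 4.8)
The plan is to apply the abstract contraction mapping / radii-polynomial result already invoked in the excerpt (Theorems~\ref{thm:contr} and~\ref{Radii}, specialized to the base point $\bar a = 0$) to the operator $T$ on the Banach space $X_\nu^\infty$. First I would verify that $T$ is well-defined as a map $X_\nu^\infty \to X_\nu^\infty$: for $x \in X_\nu^\infty$ we have $a^N + x \in X_\nu$, so each $R_\alpha(a^N+x)$ makes sense (it is a finite sum of truncated Cauchy–Convolution products of coordinates of $a^N+x$), and by Theorem~\ref{Thm:InverseG} each $\mathcal{G}_\alpha^{-1}$ is a bounded linear operator on $(\ellnu)^n$ with $\|\mathcal{G}_\alpha^{-1}\|_{B((\ellnu)^n)} \leq B_g$ uniformly in $\alpha \geq N$. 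Thus $T(x)$ has only components of order $\geq N$, i.e.\ lands in $X_\nu^\infty$; one must also check the sum $\sum_{\alpha \geq N}\|\mathcal{G}_\alpha^{-1}R_\alpha(a^N+x)\|_{1,\nu}$ converges, which follows from the analyticity/decay of the true parameterization together with the uniform bound $B_g$ and the polynomial structure of $R$.

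Next I would check the two hypotheses that make $T$ a contraction on $\overline{B_{r^\infty}(0)}$. The key observation is that $T = \mathcal{G}^{-1}\circ R \circ (a^N + \cdot)$ with $\mathcal{G}^{-1}$ linear of operator norm $\le B_g$ (here I abuse notation, writing $\mathcal{G}^{-1}$ for the block-diagonal operator with blocks $\mathcal{G}_\alpha^{-1}$). Therefore for the \emph{self-map} property: for $x \in \overline{B_{r^\infty}(0)}$,
\[
\|T(x)\|_{X_\nu^\infty} \leq B_g \sum_{\alpha=N}^\infty \|R_\alpha(a^N+x)\|_{(\ellnu)^n}
\leq B_g\left( \|R(a^N+0)\| + \sup_{\|y\|\le r^\infty}\|DR(y)\|\, r^\infty \right)
\leq B_g\left( Y + Z(r^\infty) r^\infty\right) < r^\infty,
\]
where the middle inequality is the mean value inequality applied coordinate-wise to $R$ along the segment from $x=0$ to $x$ (using that $R_\alpha$ is Fréchet differentiable, as recorded in Theorem~\ref{Thm:InverseG}, and that $a^N$ is fixed so $DR_\alpha$ at $a^N+y$ is what the hypothesis bounds), and the last inequality is the assumed radii inequality $B_G(Y + Z(r^\infty) r^\infty) < r^\infty$. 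For the \emph{contraction} property: for $x_1, x_2 \in \overline{B_{r^\infty}(0)}$,
\[
\|T(x_1) - T(x_2)\|_{X_\nu^\infty} \leq B_g \sup_{\|y\|\le r^\infty}\sum_{\alpha=N}^\infty\|DR_\alpha(a^N+y)\|_{\mathcal{B}((\ellnu)^n)}\, \|x_1 - x_2\|_{X_\nu^\infty} \leq B_g Z(r^\infty) \|x_1 - x_2\|_{X_\nu^\infty},
\]
and $B_g Z(r^\infty) \le B_G(Y + Z(r^\infty)r^\infty)/r^\infty < 1$ since $Y \geq 0$. Hence $T$ is a contraction of the complete metric space $\overline{B_{r^\infty}(0)} \subset X_\nu^\infty$ into itself, and the Banach fixed point theorem yields a unique $a^\infty \in B_{r^\infty}(0)$ with $T(a^\infty) = a^\infty$.

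The main obstacle I anticipate is not any single estimate but the bookkeeping needed to justify interchanging the uniform operator bound $B_g$ with the infinite sum over $\alpha$, and in particular confirming that the quantities $Y$ and $Z(r)$ in the hypotheses are finite — that is, that $\sum_{\alpha \ge N}\|R_\alpha(0)\|$ and $\sup_{\overline{B_r(0)}}\sum_{\alpha\ge N}\|DR_\alpha(x)\|$ genuinely converge. This is where the polynomial (finite-degree) structure of the vector field and the geometrically-weighted $\ell^1_\nu$ norm do the real work: a degree-$p$ term contributes an $\alpha$-th Taylor coefficient that is a finite convolution of lower-order coefficients, and the Banach-algebra property of $\|\cdot\|_{1,\nu}$ under $\star$ together with the decay of $a^N$ (it is eventually zero) and of $x$ (bounded in $X_\nu^\infty$) forces these sums to converge — indeed this convergence is exactly the content of the discussion preceding the corollary and of Appendix~\ref{sec:CauchyBound}. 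Once finiteness of $Y$ and $Z$ is granted, the argument above is a direct specialization of Theorem~\ref{Radii} with $\bar a = 0$, so the remaining steps are routine.
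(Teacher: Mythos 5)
Your proposal is correct and follows essentially the same route as the paper: pull the uniform bound $B_g$ on $\|\mathcal{G}_\alpha^{-1}\|$ out of the sums to reduce the estimates on $T$ and $DT$ to the hypothesized bounds $Y$ and $Z(r)$, then conclude by the contraction/radii-polynomial theorem with base point $\bar a = 0$ (you merely unfold that theorem into an explicit Banach fixed-point argument, which is what the paper cites). The only differences are cosmetic: you correctly read $\mathcal{G}_\alpha$ in the statement as $\mathcal{G}_\alpha^{-1}$ and $B_G$ as $B_g$, consistent with the surrounding text.
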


Note that the estimates in Corollary \ref{cor:ResultTail} follows directly 
from the fact (established above) that for all $b,c \in X^\infty$
\begin{align*}
\|T(b)\|_{X_\nu^\infty} = \sum_{\alpha=N}^\infty \left\| \mathcal{G}_\alpha^{-1} 
R_\alpha(b) \right\|_{(\ellnu)^n} \leq B_g \cdot 
\sum_{\alpha=N}^\infty \left\|R_\alpha(b) \right\|_{(\ellnu)^n},
\end{align*}
and
\begin{align*}
\|DT(b)c\|_{X_\nu^\infty} = \sum_{\alpha=N}^\infty \left\|
 \mathcal{G}_\alpha^{-1} DR_\alpha(b)c \right\|_{(\ellnu)^n} 
 \leq B_g \cdot \sum_{\alpha=N}^\infty \left\|DR_\alpha(b)c \right\|_{(\ellnu)^n}.
\end{align*}
The remainder of the proof is a direct application of Theorem \ref{thm:contr}. 
Finally, we note that $a^\infty$ is the Fourier-Taylor expansion of the tail for the 
exact solution of \eqref{eq:InvarianceFourier}. This allows us to complete the 
estimate in equation \eqref{eq:BoundTailUnknown} and obtain
\begin{align}
\|P - P^N\|_\infty &\leq \sum_{\alpha=0}^{N-1}
 \|A_\alpha - \bar A_\alpha\|_\infty + \left\| \sum_{\alpha=N}^\infty  A_\alpha \right\| \\
&\leq r^N + \left\| \sum_{\alpha=N}^\infty  A_\alpha \right\| \\
&\leq r^N + r^\infty.
\end{align}
We return to the examples of the Lorenz and 
Hill systems to exhibit the construction of the bounds $Y$ and $Z(r)$
in practice.

\begin{example}[The $Y$ and $Z(r)$ bounds]
In the case of the Lorenz system, there are only two convolution products. 
Let $\bar a^N = (\bar a^1,\bar a^2, \bar a^3) \in \left( X_\nu^N\right)^3$, and $e=(e^1,e^2,e^3) \in \left( X_\nu^N\right)^3$ a point in the ball of radius $r^N$ about the numerical approximation. 
We have that
\begin{align*}
\sum_{\alpha = N}^\infty \left\| (a^1)^N \ast (a^3)^N \right\|_{1,\nu} &= \sum_{\alpha = N}^{2N} \left\| \left([\bar a^1 + e^1] \hat \ast [\bar a^3 +e^3] \right)_\alpha \right\|_{1,\nu} \\
&\leq \sum_{\alpha = N}^{2N} \left\| (\bar a^1 \hat\ast \bar a^3)_\alpha \right\|_{1,\nu} +\left\| (\bar a^1 \hat\ast \bar e^3)_\alpha \right\|_{1,\nu} +\left\| (\bar e^1 \hat\ast \bar a^3)_\alpha \right\|_{1,\nu} +\left\| (e^1 \hat\ast e^3)_\alpha \right\|_{1,\nu} \\
&\leq  \sum_{\alpha = N}^{2N} \left\| (\bar a^1 \hat\ast \bar a^3)_\alpha \right\|_{1,\nu} +\left\| (\bar a^1 \hat\ast \bar e^3)_\alpha \right\|_{1,\nu} +\left\| (\bar e^1 \hat\ast \bar a^3)_\alpha \right\|_{1,\nu} +\left\| (e^1 \hat\ast e^3)_\alpha \right\|_{1,\nu} \\
&\leq r^N\left( \left\| \bar a^1 \right\|_{X^N} +\left\| \bar a^3 \right\|_{X^N} +r^N\right)+ \sum_{\alpha = N}^{2N} \left\| (\bar a^1 \hat\ast \bar a^3)_\alpha \right\|_{1,\nu}, \\
\end{align*}
and note that the remaining finite sum is bound using interval arithmetics. 
The same computation for the convolution product in the third component
leads to 
\[
Y= \max \begin{pmatrix}
r^N\left( \left\| \bar a^1 \right\|_{X_\nu^N} +\left\| \bar a^3 \right\|_{X_\nu^N} +r^N\right)+\displaystyle \sum_{\alpha = N}^{2N} \left\| (\bar a^1 \hat\ast \bar a^3)_\alpha \right\|_{1,\nu}, \\
r^N\left( \left\| \bar a^1 \right\|_{X_\nu^N} +\left\| \bar a^2 \right\|_{X_\nu^N} +r^N\right)+\displaystyle \sum_{\alpha = N}^{2N} \left\| (\bar a^1 \hat\ast \bar a^2)_\alpha \right\|_{1,\nu},
\end{pmatrix}
\]
which satisfies the required inequality. 
Note that the result is that the nonlinearity is applied to the numerical data and 
that derivative of the nonlinearity, evaluated at the numerical data, 
''feels'' the perturbation.  This is a general fact which allows us to 
extend the bounds easily to the Hill and other polynomial problems.   

To compute the polynomial bound $Z(r)$, let $c \in X^\infty$ denote two
elements with norm one. We seek a uniform bound on 
$\left\| DR(x)c\right\|_{\left(X^\infty\right)^3}$, where $x$ is an element in the 
ball of radius $r$ about the solution. 
Again, we will study only one of the non-zero component, the other component can 
be bounded similarly. 

We have that
\begin{align*}
\left\| \left[DR(x)c\right]^2\right\|_{X_\nu^\infty} &= \sum_{\alpha=N}^\infty \left\| \left[((a^1)^N +x^1) \hat\ast c^3 \right]_\alpha +\left[c^1 \hat\ast ((a^3)^N +x^3) \right]_\alpha  \right\|_{1,\nu} \\
&\leq \sum_{\alpha=N}^\infty \left\| ((a^1)^N \hat\ast c^3 )_\alpha +(x^1 \hat\ast c^3 )_\alpha +(c^1 \hat\ast (a^3)^N)_\alpha +(c^1 \hat\ast x^3)_\alpha  \right\|_{1,\nu} \\
& \leq \left\| (a^1)^N  \right\|_{X_\nu^N} +\left\| (a^3)^N  \right\|_{X_\nu^N} +2r,
\end{align*}
so that
\[
Z(r)= \max \begin{pmatrix}
\left\| (a^1)^N  \right\|_{X_\nu^N} +\left\| (a^3)^N  \right\|_{X_\nu^N} \\
\left\| (a^1)^N  \right\|_{X_\nu^N} +\left\| (a^2)^N  \right\|_{X_\nu^N}
\end{pmatrix} +2r
\]
This calculation also generalizes to the case of Hill's three body problem.
\end{example}

\begin{figure}\label{figure:ABtoAABmanifolds}
\centering
\includegraphics[width=\textwidth]{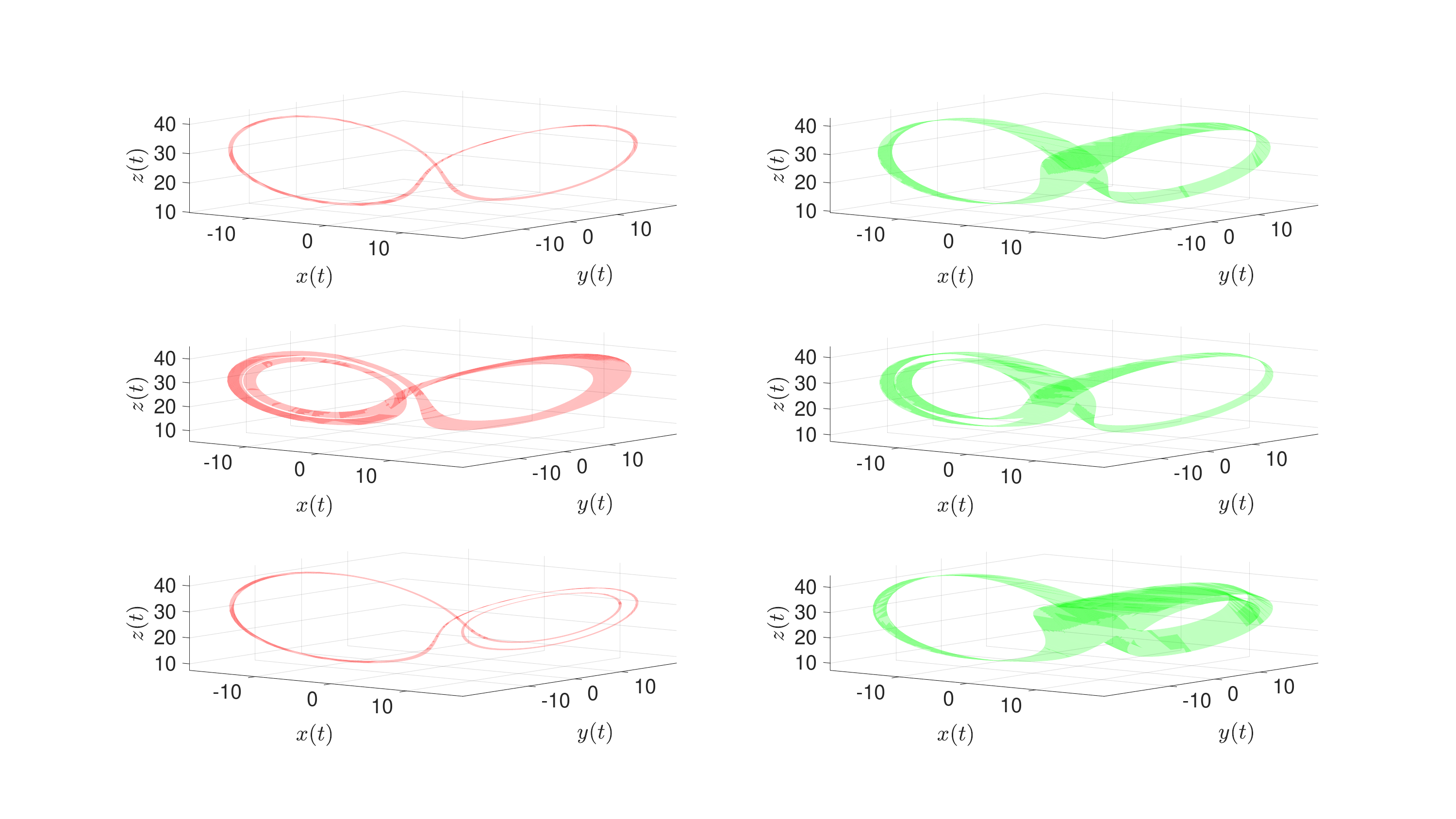}

\caption{Stable and unstable manifolds attached to the 
three shortest periodic solutions for the Lorenz equation at the classical parameters 
$\sigma=10, \rho=28, \beta=\frac{8}{3}$.  Recall that the periodic orbits were illustrated
in Figure \ref{fig:LorenzAttractor}.
The stable manifold attached to each of the three periodic solutions
are illustrated on the left (green), and the unstable 
manifolds are illustrated on the right (red). 
All parameterizations are computed with $k=75$ Fourier coefficients 
and $N=8$ Taylor coefficients. }
\end{figure}

\begin{figure}
\centering
\includegraphics[width=\textwidth]{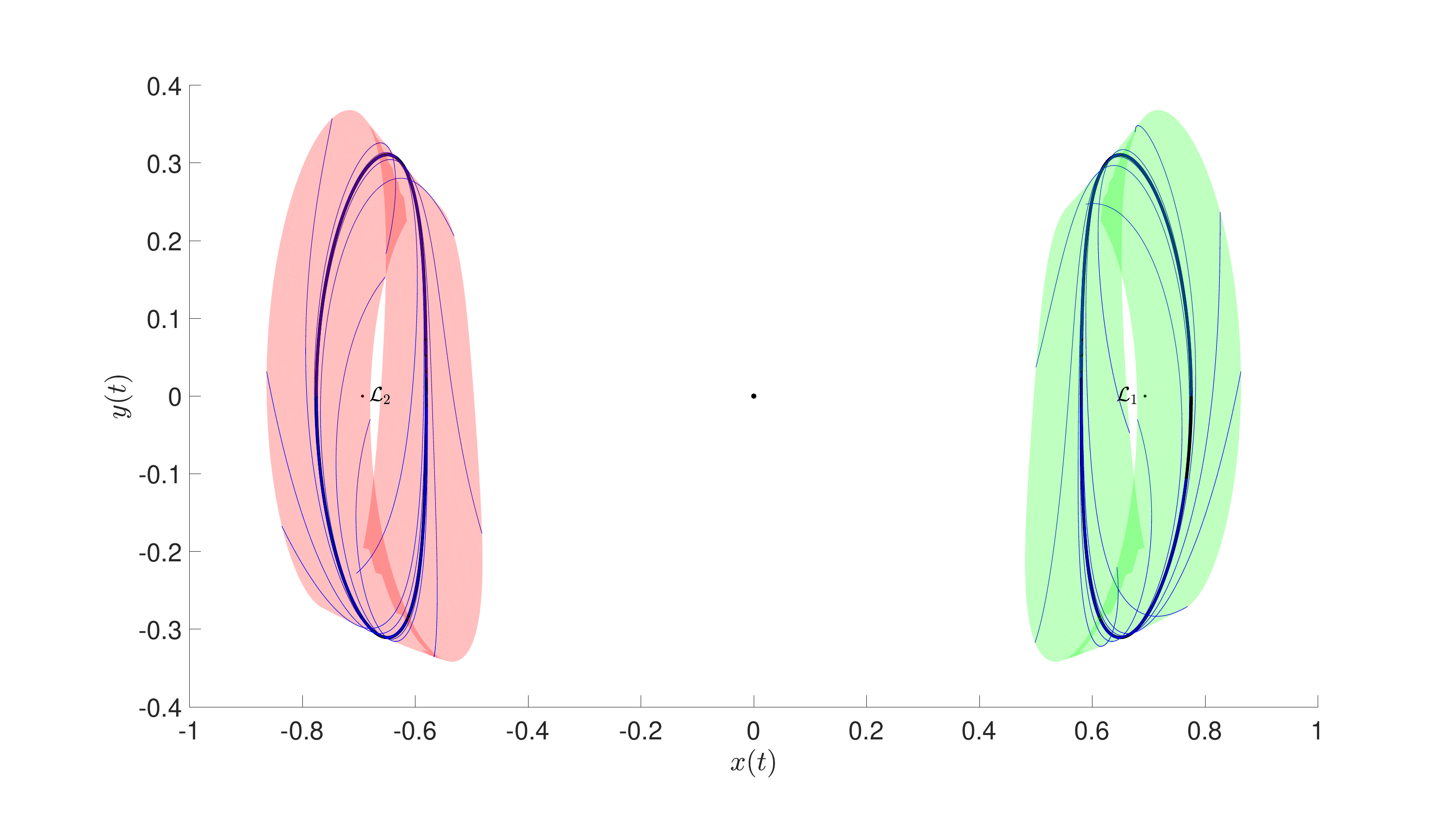}
\caption{A pair of periodic solutions to the HRFBP at $\mu= 0.00095$ and Jacobi constant of $C=4$. The Lyapunov orbit at $\mathcal{L}_1$ is displayed with a parameterization of its stable manifold (green) The Lyapunov orbit at $\mathcal{L}_2$ is displayed with its unstable manifold (red). Both manifold are computed with $N=8$ Taylor coefficients and $m=30$ Fourier coefficient. To illustrate the dynamic on each manifold, we apply the conjugacy relation \eqref{eq:parmMethod} to points evenly distributed on the boundary $|\sigma|=1$. The resulting trajectories are displayed in blue and accumulate, in forward time for the stable case and backward time for the unstable case, to the periodic orbit at the center of the cylinder.  } \label{fig:BVPschematic}
\end{figure}

%
%
%
%
% BVP example (heteroclinic)
%
%

%
\section{Connections between periodic orbits}\label{Section:Orbit}
Assume that for a given periodic solution, its
stable/unstable manifold is known using the approach described in 
Section \ref{Section:Manifold}.  Assume also that for each parameterized manifold 
we have the representations 
$P, \bar P:[0,T_1]\times [-1,1] \to \mathbb{R}^n$, 
$Q, \bar Q:[0,T_2]\times [-1,1] \to \mathbb{R}^n$, and $r^P,r^Q < \infty$ with 
\begin{enumerate}
\item The trajectory $\gamma_1(t)= P(t,0)$ and $\gamma_2(t)=Q(t,0)$ are periodic solutions of the ODE with periods $T_1$ and $T_2$ respectively.
\item For $\lambda_u$, the unstable Floquet multiplier of the periodic solution $\gamma_1$, then the trajectory $x_u(t)= P(t,x_0 e^{\lambda_u t} )$ is a solution of the ODE generated by $f$ for any choice of $x_0 \in [-1,1]$. Moreover, $x_u(t)$ accumulates to the periodic solution $\gamma_1$ in backward time.
\item For $\lambda_s$, the stable Floquet multiplier of the periodic solution $\gamma_2$, then the trajectory $x_s(t)= P(t,x_1 e^{\lambda_s t} )$ is a solution of the ODE generated by $f$ for any choice of $x_1 \in [-1,1]$. Moreover, $x_s(t)$ accumulates to the periodic solution $\gamma_2$ in forward time.
\item $\bar P, \bar Q$ are finite-dimensional Fourier-Taylor approximations of $P$ and $Q$ respectively, with 
\[
\| P - \bar P \|_\infty < r^{P},~\mbox{and}~ \| Q - \bar Q \|_\infty < r^{Q}.
\] 
We remark that the approximations do not need to have the same 
Fourier/Taylor dimension orders.
\end{enumerate}

If a trajectory $x(t)$, with the scalar unknowns 
$T>0$, $t_u \in [0,T_1]$, $t_s \in [0,T_2]$, 
and $ \sigma_u,\sigma_s \in [-1,1]$ solves 
the boundary value problem
\begin{align}\label{eq:BVPinitial}
\begin{cases}
x'(t)= f(x(t)), &\forall t \in (0,T), \\
x(0)= P(t_u,\sigma_u), & \\
x(T)= Q(t_s,\sigma_s), &
\end{cases}
\end{align} 
then $x(t)$ is a heteroclinic connecting orbit segment beginning on 
the unstable manifold of the periodic orbit $\gamma_1$ and
terminating on the stable manifold of the periodic orbit $\gamma_2$. 
Note that the flight time $T$ is finite, but unknown. 

Since the system of interest $f$ does not depend on time, 
a further rescaling of time allows us to remove the unknown 
from the definition of the domain of $x$. We set $L = \frac{T}{2}$, and 
seek to solve the problem
\begin{align}\label{eq:BVPChebyshevReady}
\begin{cases}
x'(t)= Lf(x(t)), &\forall t \in (-1,1), \\
x(-1)= P(t_u,\sigma_u), & \\
x(1)= Q(t_s,\sigma_s), &
\end{cases}.
\end{align} 
The problem \eqref{eq:BVPChebyshevReady} is equivalent to \eqref{eq:BVPinitial}, 
but with the addition that the unknown $x$ is such that
 $x: [-1,1] \to \mathbb{R}^n$. 
% Figure \ref{fig:BVPschematic} illustrates a solution of the 
% BVP in the case of the Hill's three-body problem.

%\begin{remark}[Heteroclinic connections in Hill's three-body problem]
%Figure \ref{fig:BVPschematic} was computed under the assumption that both periodic orbits are at the same energy level. This assumption cannot be removed, as discussed in Section \ref{sec:remarks}. We note that this criteria will be easily satisfied with the appropriate choice of phase condition in the computation of both periodic solution. We refer back to the problem defined in Example \ref{Ex:Orbit}. 
%\end{remark}

It follows from our previous assumptions about $f$ that the trajectory is real 
analytic, so that it can be expressed as a Chebyshev series whose 
coefficients decay exponentially. We recall that a real analytic 
function $h:[-1,1] \to \mathbb{R}$, which extends analytically to a 
Bernstein ellipse, can be uniquely expressed as 
\begin{align}\label{eq:ChebyExpansion}
	h(t)= y_0 + 2 \sum_{k=1}^\infty y_k T_k(t),
\end{align}
where $T_k(t)= \cos(k \arccos t )$ is the $k-$th Chebyshev polynomial. 
Chebyshev polynomials also satisfy the recurrence relation 
$T_{k+1}(t)= 2t T_k(t) -T_{k-1}(t)$ and have a well known antiderivative 
that allows a straighforward rewriting of the problem into the space of 
coefficients. It follows from the definition that we can set 
$y_{-k} = y_k$ to define a sequence of coefficients 
$y \in \ellnu$ for some $\nu > 1$ and show that
\[ 
	\| h \|_\infty \leq \|y \|_{1,\nu}.
\] 

Moreover, the product of two functions defined on $[-1,1]$ and expressed using 
Chebyshev expansion will be given by the discrete convolution product of their 
Chebyshev coefficient sequences. Hence, it will be possible to use the 
same approach as in the case of parameterized manifold to validate finite 
dimensional approximations. This approach and the rewriting of the problem into a zero 
finding operator is discussed in-depth in \cite{MR3148084}. We remark that 
solution arcs can be broken down into sub-arcs, all of which are computed using 
a corresponding Chebyshev expansion. This can be done to improve the accuracy 
of the interpolation and we refer to \cite{MR3754682} for a full discussion of this 
idea.  

In the present work, we assume that $M$ Chebyshev arcs are used to 
express the homoclinic orbit segment $x(t)$, 
solving \eqref{eq:BVPChebyshevReady}. 
That is, we take $\alpha_1,\alpha_2,\hdots,\alpha_M$, with 
\[
\sum_{i=1}^M \alpha_i = 1,
\]
to represent the flight time of the $i$-th arc as a proportion of the 
full flight time of the arc $x(t)$. For $1\leq i \leq M$, the half-frequency 
of the $i-$th Chebyshev arc is given by $L_i$ defined as
\[
L_i= \alpha_i L.
\] 
Each arc requires its own initial condition to determine the Chebyshev coefficients, 
and this condition will simply enforce the continuity of the solution arc $x(t)$.
That is, each arc's starting point in space will be the same point
as the previous arc's ending point. The first and last arcs are required to satisfy the 
appropriate boundary conditions from the initial problem \eqref{eq:BVPChebyshevReady}.
It follows that the $M$ arcs $x_i(t)$ interpolating $x(t)$ will satisfy the following $M$ boundary value problems
\begin{align*}
\begin{cases}
x_1'(t)= L_1 f(x(t)), &\forall t \in (-1,1), \\
x_1(-1)= P(t_u,\sigma_u), & \\
\end{cases},
\end{align*} 
in the first case, while the cases $2\leq i \leq M-1$ are given by
\begin{align*}
\begin{cases}
x_i'(t)= L_i f(x(t)), &\forall t \in (-1,1), \\
x_i(-1)= x_{i-1}(1), & \\
\end{cases},
\end{align*} 
and the last case $i=M$ is
\begin{align*}
\begin{cases}
x_M'(t)= L_m f(x(t)), &\forall t \in (-1,1), \\
x_M(-1)= x_{M-1}(1), & \\
x_M(1)= Q(t_s,\sigma_s). &
\end{cases}
\end{align*} 
More details are given in Remark \ref{rem:ChebyshevDecomposition}.

For $1\leq i \leq M$, let $y^i \in (\ellnu)^n$ represent the Chebyshev expansion of $x_i$, 
so that $y_{-k}^i= y_k^i$ for all $k\geq 0$ and $y= (y^1,y^2,\hdots,y^M) \in (\ellnu)^{Mn}$ 
represents the sequence of Chebyshev parameterization representing $x(t)$, 
the solution of the global BVP \eqref{eq:BVPChebyshevReady}. 
We recast the sequence of BVP into a problem in the space of Chebyshev coefficients. 
So that, for $1\leq i \leq M$, they satisfy $G^i(y)=0$ for some $G^i: (\ellnu)^n \to (\ellnu)^n$. 
Each $G^i$ is given by

%
%
%% Problem in general
%
%
\begin{align*}
\left(G^i(y)  \right)_k = \begin{cases}
\displaystyle \left( y_0^i + 2 \sum_{j=1}^\infty y_j^i (-1)^j \right) -x_0^i(y), & \mbox{for}~ k=0, \\
2k y_k^{i} + L_i \Lambda_k \left(f(y^i)\right), & \mbox{for}~ k \geq 1,
\end{cases}
\end{align*}
where $\Lambda_k$ is the linear operator $\Lambda_k: \ellnu \to \ellnu$ such that
\[
 \Lambda_k \left(f(y^i)\right) =  \left(f(y^i)\right)_{k+1} -  \left(f(y^i)\right)_{k_1}, ~\forall k \geq 1,
\] 
the initial condition $x_0^i$ is
\begin{align*}
x_0^i(y) = \begin{cases}
P(t_u,\sigma_u), & \mbox{if}~ i=1, \\
y_0^{i-1} + 2\displaystyle \sum_{j=1}^\infty y_j^{i-1} (-1)^j , & \mbox{if}~ i>1, \\
\end{cases}
\end{align*}
and $f(y^j)$ is the evaluation of the vector field at the Chebyshev expansion $y^j$. 
This expression is identical to the Fourier case presented in 
Examples \ref{Ex:Orbit} and \ref{Ex:Bundle}. We note that each problem 
uses the initial condition in its definition but the final condition $y^M(1)=Q(t_s,\sigma_s)$ 
is unused. This condition can be written as a solution to 
\[
\eta(y,t_s)= \displaystyle \left( y_0^M + 2 \sum_{j=1}^\infty y_j^M (-1)^j \right) -Q(t_s,\sigma_s) =0,
\]
and this scalar equation balances the scalar variable corresponding to the flight time, 
and the evaluation of both parameterization. The following example shows the operator
 and an approximate solution in the case of the two problems of interest.

\begin{remark}[Transversality] \label{rem:transverse}
{\em
Our computer assisted analysis of  Equation \eqref{eq:BVPChebyshevReady}
uses a Newton-Kantorovich argument to establish the existence of a 
true zero of the system of equations in a small neighborhood of a 
good enough numerically computed approximate solution.
As a byproduct, it is a standard consequence of the Newton-Kantorovich
argument that this zero is also non-degenerate.  
Transversality of the intersection now follows 
from non-degeneracy exactly as in Theorem 7 of 
\cite{MR3207723}.  See also \cite{MR2821596}.
}
\end{remark}

\begin{example}[Connecting orbits in Lorenz]\label{Ex:ConnectingLorenz}
We present the operator $G^i$ in the case $i=1$, so that the initial condition 
requires the evaluation of the unstable parameterized manifold. $G^1$ will have 
three component $G^{1,1}$, $G^{1,2}$, and $G^{1,3}$ given by
\begin{align*}
G^{1,1}_k(L,\theta_u,\sigma_u, y) &= \begin{cases}
\left( y_0^{1,1} + 2 \displaystyle \sum_{j=1}^\infty y_j^{1,1}(-1)^j \right) - P^1(\theta_u,\sigma_u), & \mbox{if}\quad k=0, \\
2k y_k^{1,1} +\alpha_1 L\Lambda_k \left( \sigma( y^{1,2}-y^{1,1}) \right), & \mbox{if}\quad k>0, \\
\end{cases} \\
G^{1,2}_k(L,\theta_u,\sigma_u, y) &= \begin{cases}
y_0^{1,2} + 2 \displaystyle \sum_{j=1}^\infty y_j^{1,2} - P^{2}(\theta_u,\sigma_u), & \mbox{if}\quad k=0, \\
2k y_k^{1,2} +\alpha_1 L \Lambda_k \left(\rho y^{1,1}  -y^{1,2}  -(y^{1,1}\ast y^{1,3}) \right), & \mbox{if}\quad k>0, \\
\end{cases} \\
G^{1,3}_k(L,\theta_u,\sigma_u, y) &= \begin{cases}
y_0^{1,3} + 2 \displaystyle \sum_{j=1}^\infty y_j^{1,3} - P^{3}(\theta_s,\sigma_s), & \mbox{if}\quad k=0, \\
2k y_k^{1,2} +\alpha_1 L \Lambda_k \left( -\beta y^{1,3} + (y^{1,1}\ast y^{1,2}) \right), & \mbox{if} \quad k>0. \\
\end{cases} 
\end{align*}
The scalars $\theta_s,\sigma_s$ are also variable in this problem, but they are omitted from the 
left hand side of the previous equations, as they are only involved in the definition of the 
scalar condition $\eta$. Define $G:\mathbb{R}^3 \times \left( \ellnu \right)^{3M} \to \mathbb{R}^3 \times \left( \ell_{\nu'}^1 \right)^{3M} $ for some $\nu' < \nu$ as
\begin{align}
G(L,\theta_u,\theta_s,y) = \left( \eta(L,\theta_s,y), G^1(L,\theta_u,y), G^2(L,y), \hdots, G^M(L,y) \right). \label{eq:ChebyshevProblemLorenz}
\end{align}
%A solution of this problem is shown in Figure \ref{figureABtoAABorbit}.
\end{example}

\begin{remark}[On the domain decomposition of the Chebyshev Arc]\label{rem:ChebyshevDecomposition}
{\em
Note that the choice of the partition values $\alpha_i$ will affect the length of the 
respective arc, and therefore the decay of its Chebyshev coefficients. This choice 
does not introduce new variables to the system, as the partition is fixed and the global 
integration time $L$ remains the only unknown. This choice is made by considering 
the properties of the numerical solution, before 
any attempt at validation. This allows to take Chebyshev sequence of same dimension for all $M$ arcs, thus simplifying the writing of the numerical computation and its validation. The work of
\cite{MR4292534} provides a scheme to generate the optimal partition,
leading to better numerical stability. Moreover, 
the number of subdomains $M$ is chosen so that the last coefficients for 
each truncated Chebyshev expansion are below a chosen threshold 
(near machine precision). }
\end{remark}

\begin{remark}[On the choice of scalar variables]
{\em
In equation \eqref{eq:ChebyshevProblemLorenz}, we removed $\sigma_u$ and $\sigma_s$ from the variables of the problem in order to keep 
the system balanced. This choice also guarantees that each parameterization will be
 evaluated within their respective domain of definition, as both parameterized manifolds are 
 periodic in the remaining variables. Note however that it is also possible to fix $t_u$ and $t_s$, 
 but that this would require extra attention to ensure that the solution does not fall outside the 
 domain of $P$ and $Q$. This scenario might be beneficial to use despite the added constraint, 
 as it sometimes leads to a better conditioned numerical inverse.}
\end{remark}

 The set of variables will be
\begin{equation*}
	(L,\theta_u,\theta_s,y^{(1,1)},\hdots,y^{(M,n)}) \in \mathbb{R}^3 \times \left( \ellnu \right)^{Mn}  \bydef Y.
\end{equation*}
The space $Y$ just defined is a Banach space with norm 
\[
\|y \|_Y = \max\lbrace |L|,|\theta_u|, |\theta_s| ,\|y^{(1,1)}\|_{1,\nu} , \hdots, \|y^{(T,3)}\|_{1,\nu}  \rbrace.
\] 
An approximated solution is again validated using Theorem \ref{Radii}. 
The computation of the $Y$ and $Z(r)$ bounds are 
similar to the Fourier-Taylor case discussed above, and are
 omitted from this work. The bounds are however quite similar to the previous 
 cases treated in \cite{PaperBridge}.

%
%\begin{example}[Connecting orbits in Hill's problem]\label{Ex:ConnectingHills}
%...
%\end{example}
%

\section{Examples} \label{sec:examples}

\subsection{Cycle-to-cycle connections in the Lorenz equations} \label{sec:lorenzResults}
Recalling the convention that the letters $A$ and $B$ denote
the turning of a periodic orbit in the Lorenz system about the 
left and right ``eyes'' respectively, we use the parameterization 
method described above to prove the following.  

\begin{theorem}
For the periodic orbit $\gamma_{AB}$ (resp.  $\gamma_{AAB}$, or $\gamma_{ABB}$), 
there exists a pair of two-dimensional stable and unstable manifold 
$P_{AB}$, $Q_{AB}$ (resp. $P_{AAB}$, $Q_{AAB}$ or $P_{ABB}$, $Q_{ABB}$), 
solution to \eqref{eq:parmMethod}. The finite dimensional approximation satisfies
\begin{align*}
\left\| P_{AB} - \bar P_{AB} \right\|_\infty < 1.2027 \cdot 10^{-11},
 \quad &\left\| Q_{AB} - \bar Q_{AB} \right\|_\infty < 2.9500 \cdot 10^{-10}, \\
\left\| P_{AAB} - \bar P_{AAB} \right\|_\infty < 1.1685 \cdot 10^{-10}, 
\quad &\left\| Q_{AAB} - \bar Q_{AAB} \right\|_\infty < 4.0663 \cdot 10^{-10}, \\
\left\| P_{ABB} - \bar P_{ABB} \right\|_\infty < 5.9731 \cdot 10^{-10}, 
\quad &\left\| Q_{ABB} - \bar Q_{ABB} \right\|_\infty < 1.0012 \cdot 10^{-9}, \\
\end{align*}
Each computation is performed with $m=70$ Fourier modes, 
$N=8$ Taylor modes, and $\nu=1.03$.
\end{theorem}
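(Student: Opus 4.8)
The plan is to run the computer-assisted scheme of Section~\ref{Section:Manifold} once for each of the six parameterizations $P_{AB}, Q_{AB}, P_{AAB}, Q_{AAB}, P_{ABB}, Q_{ABB}$, producing in each case a validated finite-order radius $r^N$ and a validated tail radius $r^\infty$, and then reading off the stated $C^0$ estimates from $\|P - \bar P\|_\infty \le r^N + r^\infty$ (and likewise for $Q$). Throughout, $n = 3$, $m = 70$, $N = 8$, and $\nu = 1.03$.

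For a fixed symbol word such as $AB$, the first step is to compute and validate the zeroth- and first-order data. Truncating $F_0$ of Example~\ref{Ex:Orbit} to $|k| < m$ and running Newton's method yields $(\bar\omega, \bar a_0)$; applying the radii-polynomial form of Theorem~\ref{thm:contr} with the operators $A, A^\dagger$ as in \cite{HLM} certifies a true zero $(\omega, a_0)$ in a small ball, giving the periodic orbit $\gamma_{AB}$. With $\bar a_0$ in hand, solving $F_1 = 0$ of Example~\ref{Ex:Bundle} produces the unstable (resp. stable) Floquet exponent $\lambda$ and normal-bundle coefficients $\bar a_1$, validated by the same contraction theorem; the scalar equation $F_1^0$ fixes the eigenfunction scaling $L$, chosen so that the Taylor coefficients decay to roughly machine precision by order $N = 8$, so that $[-1,1]$ is a legitimate domain for $\sigma$. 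One then solves the homological equations $F_\alpha = 0$ of Example~\ref{Ex:Higher} recursively for $2 \le \alpha < N$ to obtain $\bar a^N \in (X_\nu^N)^3$, and a single application of Theorem~\ref{thm:contr} to the truncated map $F = (F_0, \dots, F_{N-1}, 0, \dots)$ certifies $\sum_{\alpha=0}^{N-1} \|a_\alpha - \bar a_\alpha\|_{1,\nu} \le r^N$.

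The tail $\{a_\alpha\}_{\alpha \ge N}$ is controlled exactly as in Subsection~\ref{sec:radPolyParmMethod}: compute $K$ with $\|DF(a_0)\|_{B((\ellnu)^3)} < K$, set $M = \lceil K/|\mathrm{Re}(\lambda)| \rceil$, and --- if $\mathcal{A} = \{N \le \alpha < M\}$ is nonempty --- produce the numerical inverses $D_\alpha^\dagger$ and constants $\epsilon_{\alpha,1}, \epsilon_{\alpha,2}$ verifying~\eqref{eq:CriteriaFinite} for each $\alpha \in \mathcal{A}$, which gives Theorem~\ref{Thm:InverseG} and the uniform operator bound $B_g$ of~\eqref{eq:BoundInverse}. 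Feeding the Lorenz $Y$ and $Z(r)$ bounds from the final example of that subsection into Corollary~\ref{cor:ResultTail} and locating an $r^\infty$ with $B_g(Y + Z(r^\infty) r^\infty) < r^\infty$ certifies the tail, and $\|P - \bar P\|_\infty \le r^N + r^\infty$ delivers the six displayed numbers. That the validated coefficient sequence actually corresponds to an analytic solution of the invariance equation~\eqref{eq:parmMethod} is automatic: $\nu > 1$ forces the Fourier-Taylor series to converge on $\mathbb{T}_T \times [-1,1]$, and matching like terms was precisely the construction of $F$.

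The main obstacle is quantitative rather than conceptual. The scheme closes only if the numerical data is accurate enough and the parameters $\nu$, $L$, $m$, $N$ are chosen so that simultaneously (i) the periodic-orbit and bundle contraction inequalities hold, (ii) $M$ is not much larger than $N$, so that each case-by-case inverse on $\mathcal{A}$ is well conditioned and~\eqref{eq:CriteriaFinite} holds, and (iii) the Corollary~\ref{cor:ResultTail} inequality admits a small solution $r^\infty$. Getting all three cycles to pass with the same $m = 70$, $N = 8$, $\nu = 1.03$ is the delicate part --- the longer-period orbits $\gamma_{AAB}$ and $\gamma_{ABB}$ have slower Fourier decay, and their stable manifolds $Q$ inherit the larger errors visible in the statement --- and this is where the parameter tuning is spent; once the data is in place, the interval-arithmetic evaluation of the convolution bounds and of $\|DF(a_0)\|$ is routine.
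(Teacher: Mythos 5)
Your proposal follows exactly the pipeline the paper uses for this theorem: validate the periodic orbit and bundle via the operators of Examples \ref{Ex:Orbit} and \ref{Ex:Bundle}, solve the homological equations of Example \ref{Ex:Higher} up to order $N$, certify the finite part with the radii polynomial argument to get $r^N$, control the tail via Theorem \ref{Thm:InverseG} and Corollary \ref{cor:ResultTail} to get $r^\infty$, and conclude with $\|P-\bar P\|_\infty \le r^N + r^\infty$. This matches the paper's (implicit, computation-backed) proof, so no further comment is needed.
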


Having in hand the periodic orbits, their bundles, and their invariant manifolds, 
we compute heteroclinic orbits as solutions of Equations \eqref{eq:BVPinitial}. 
There are six possible heteroclinic pairs which we labeled $I$ to $VI$ using the ordering;
\begin{enumerate}
 \item[I]: $P_{AB}$ to $Q_{AAB}$,
 \item[II]: $P_{AB}$ to $Q_{ABB}$, 
 \item[III]: $P_{AAB}$ to $Q_{AB}$, 
 \item[IV]: $P_{AAB}$ to $Q_{ABB}$, 
 \item[V]: $P_{ABB}$ to $Q_{AB}$, and 
 \item[VI]: $P_{ABB}$ to $Q_{AAB}$.
\end{enumerate}

\begin{theorem}
Let $x_i$ denote the numerically computed heteroclinic
orbits segments illustrated in Figure \ref{fig:lorenzConnections}
for the six connecting orbit problems above. Then, the
true solutions of \eqref{eq:BVPinitial} have 
\begin{align*}
\left\| x_{I} - \bar x_{I} \right\|_{\infty} < 4.0072 \cdot 10^{-8}, \\
\left\| x_{II} - \bar x_{II} \right\|_{\infty} < 6.2914 \cdot 10^{-8}, \\
\left\| x_{III} - \bar x_{III} \right\|_{\infty} < 1.5951 \cdot 10^{-8}, \\
\left\| x_{IV} - \bar x_{IV} \right\|_{\infty} < 5.8614 \cdot 10^{-8}, \\
\left\| x_{V} - \bar x_{V} \right\|_{\infty} < 5.8690 \cdot 10^{-8}, \\
\left\| x_{VI} - \bar x_{VI} \right\|_{\infty} < 4.7930 \cdot 10^{-8}. \\
\end{align*}
Each connecting orbit segment is computed using
$4$ Chebyshev arcs and a uniform meshing of time. 
Each Chebyshev step is approximated using $100$ modes. 
\end{theorem}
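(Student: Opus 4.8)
The plan is to recast each of the six connecting-orbit problems as a zero-finding problem for the Chebyshev operator $G$ of Equation \eqref{eq:ChebyshevProblemLorenz} and then apply the Newton--Kantorovich / radii-polynomial scheme of Theorem \ref{Radii}, exactly in the spirit of the tail argument of Section \ref{sec:radPolyParmMethod}. Concretely, for a fixed pair of already-validated parameterizations (say $P_{AB}$, $Q_{AAB}$ for case I) I work in the Banach space $Y = \mathbb{R}^3 \times (\ellnu)^{Mn}$ with $M=4$ arcs, $n=3$ and $\nu$ as in the manifold computations. The unknown is $(L,\theta_u,\theta_s, y)$, where $y$ collects the Chebyshev coefficients of the four arcs and the amplitudes $\sigma_u,\sigma_s$ are held fixed (as explained after Example \ref{Ex:ConnectingLorenz}) so that the system is square and the manifold evaluations land inside $[-1,1]$. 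First I would compute, by Newton's method on a finite truncation of $G$, the numerical approximate zero $\bar x_i$ displayed in Figure \ref{fig:lorenzConnections}, together with a numerical approximate inverse $A$ of $DG$ at that point --- finite-dimensional in the low modes and equal to the explicit diagonal inverse of the leading $2k\,\mathrm{Id}$ part in the tail.

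Next I would assemble the bounds required by Theorem \ref{Radii}: a defect bound $Y$ with $\|A\,G(\bar x_i)\|_Y \le Y$, and a polynomial bound $Z(r)$ with $\sup_{\|v\|_Y \le r}\|\,\mathrm{Id} - A\,DG(\bar x_i + v)\,\|_{B(Y)} \le Z(r)$. The interior rows $2k y_k^{i} + L_i \Lambda_k(f(y^i))$ are quadratic in $y$ (the Lorenz nonlinearity contributes one Chebyshev convolution in the second and third components each), so the $Z(r)$ estimate reduces to Banach-algebra estimates on $\ellnu$ convolutions together with the truncation tail of $\Lambda_k$, entirely analogous to the $Y$, $Z(r)$ computation of Section \ref{sec:radPolyParmMethod}; the unfolding of $L$ into the per-arc half-frequencies $L_i$ is routine bookkeeping. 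The genuinely new ingredient is the coupling to the manifolds: the $k=0$ rows of $G^1$ and the scalar equation $\eta$ contain $P(\theta_u,\sigma_u)$ and $Q(\theta_s,\sigma_s)$, which are not known exactly. Here I use that $P$, $Q$ are analytic with the validated $C^0$ enclosures $r^P$, $r^Q$ from the previous theorem, and in addition I need explicit bounds on $\partial_\theta P$, $\partial_\theta Q$ obtained by term-by-term differentiation of the Fourier--Taylor series plus Cauchy estimates; the contributions of $r^P$, $r^Q$ enter additively into $Y$ and those of the derivative enclosures into $Z(r)$. I would then verify with interval arithmetic that $Y + Z(r)\,r < r$ holds for some $r=r_i$ strictly below the stated tolerance, and apply Theorem \ref{thm:contr} to conclude the existence of a true zero $(L^\star,\theta_u^\star,\theta_s^\star,y^\star)$ of $G$ within distance $r_i$; since $\|\cdot\|_\infty \le \|\cdot\|_{1,\nu}$ on Chebyshev series this gives $\|x_i - \bar x_i\|_\infty < r_i$, and by construction of the BVP \eqref{eq:BVPinitial} this $x_i$ is a heteroclinic orbit segment joining the prescribed periodic orbits. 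Transversality of the intersection then comes for free: the Newton--Kantorovich argument certifies that $DG$ at the true zero is boundedly invertible, hence the zero is non-degenerate, and transversality follows as in Remark \ref{rem:transverse}. The procedure is run independently for each of the six pairs I--VI with its own numerical data, $4$ Chebyshev arcs, $100$ modes per arc, and uniform time mesh.

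I expect the main obstacle to be the quality of the approximate inverse $A$, i.e.\ the size of $Z(0)$: the connecting orbits are long relative to the characteristic time of the Lorenz flow, so $DG(\bar x_i)$ can be badly conditioned, and using a uniform mesh rather than the optimal partition of Remark \ref{rem:ChebyshevDecomposition} makes this worse; keeping $Z(0)<1$ with enough margin --- which is what ultimately controls the $10^{-8}$-level tolerances --- is the delicate point, and may force more Chebyshev modes, more arcs, or re-meshing. A secondary technical point is propagating the manifold errors $r^P$, $r^Q$ and the $\partial_\theta P$, $\partial_\theta Q$ enclosures through the block structure of $G$ without overestimating, so that the connecting-orbit tolerance does not degrade unnecessarily relative to the input manifold accuracy.
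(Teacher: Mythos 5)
Your proposal follows exactly the route the paper takes: recast each connection as a zero of the Chebyshev operator $G$ from Equation \eqref{eq:ChebyshevProblemLorenz}, validate the numerical solution via the radii polynomial framework of Appendix \ref{Radii} with $Y$ and $Z(r)$ bounds built from Banach-algebra estimates and the already-validated manifold enclosures, and read off the $C^0$ bound from $\|\cdot\|_\infty \le \|\cdot\|_{1,\nu}$. The paper in fact omits the explicit $Y$ and $Z(r)$ computations (referring to prior work), so your write-up, including the treatment of the $P(\theta_u,\sigma_u)$, $Q(\theta_s,\sigma_s)$ boundary terms and their derivative enclosures, is a faithful and somewhat more detailed account of the same argument.
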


\begin{center}
\begin{figure}
\subfigure{{\includegraphics[width=0.5\textwidth]{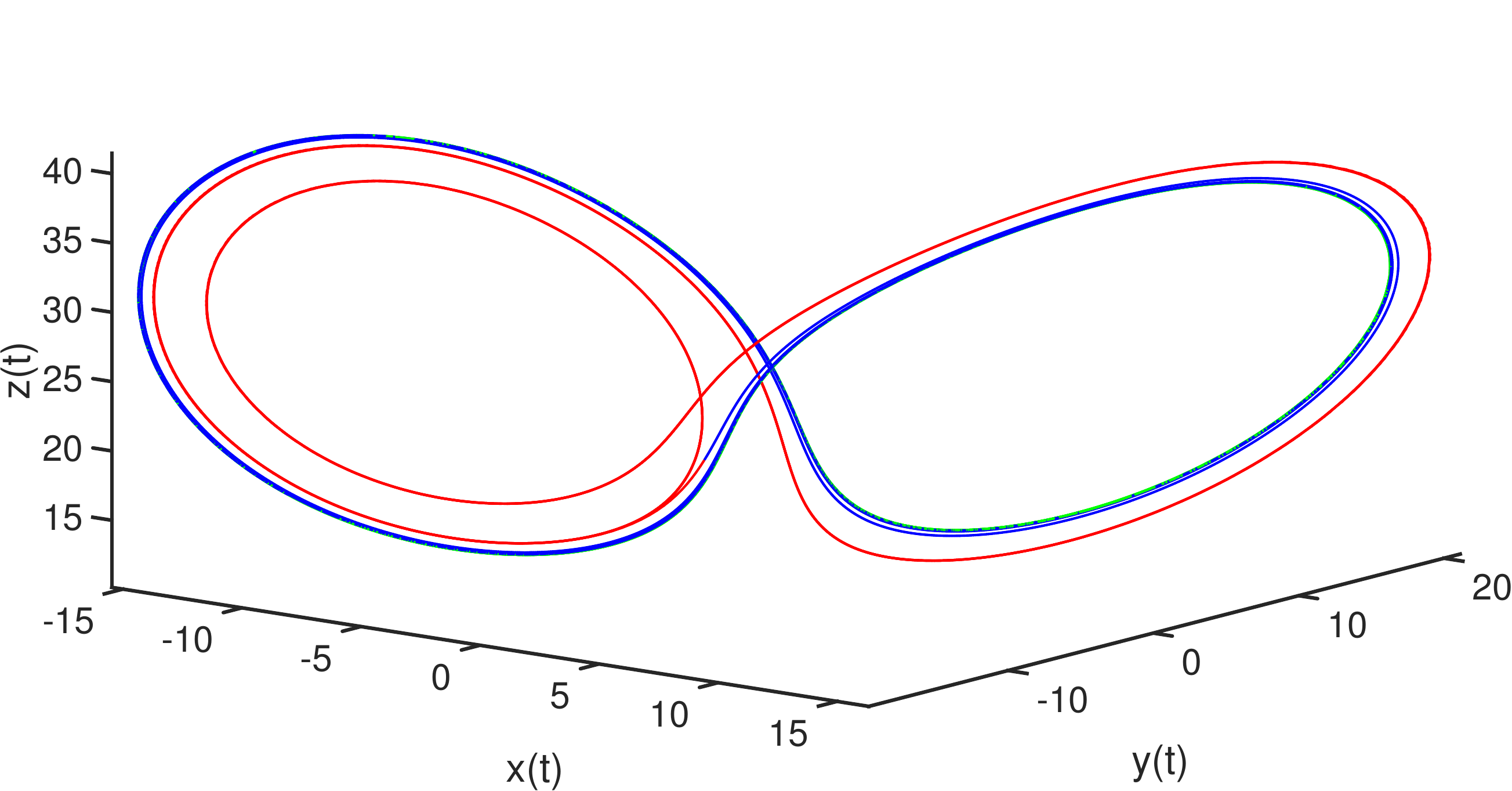}}}
\subfigure{{\includegraphics[width=0.5\textwidth]{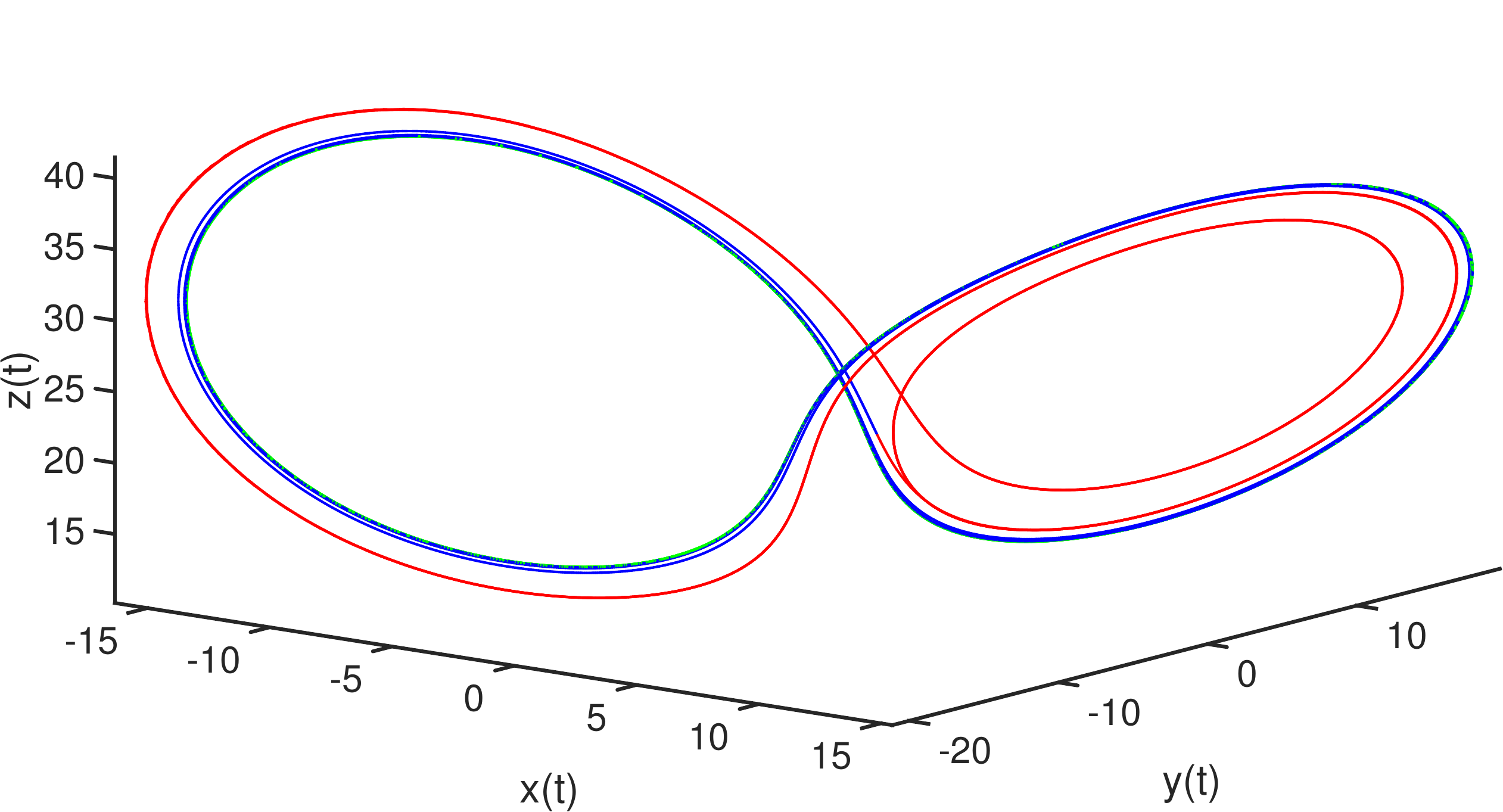}}} \\
\subfigure{{\includegraphics[width=0.5\textwidth]{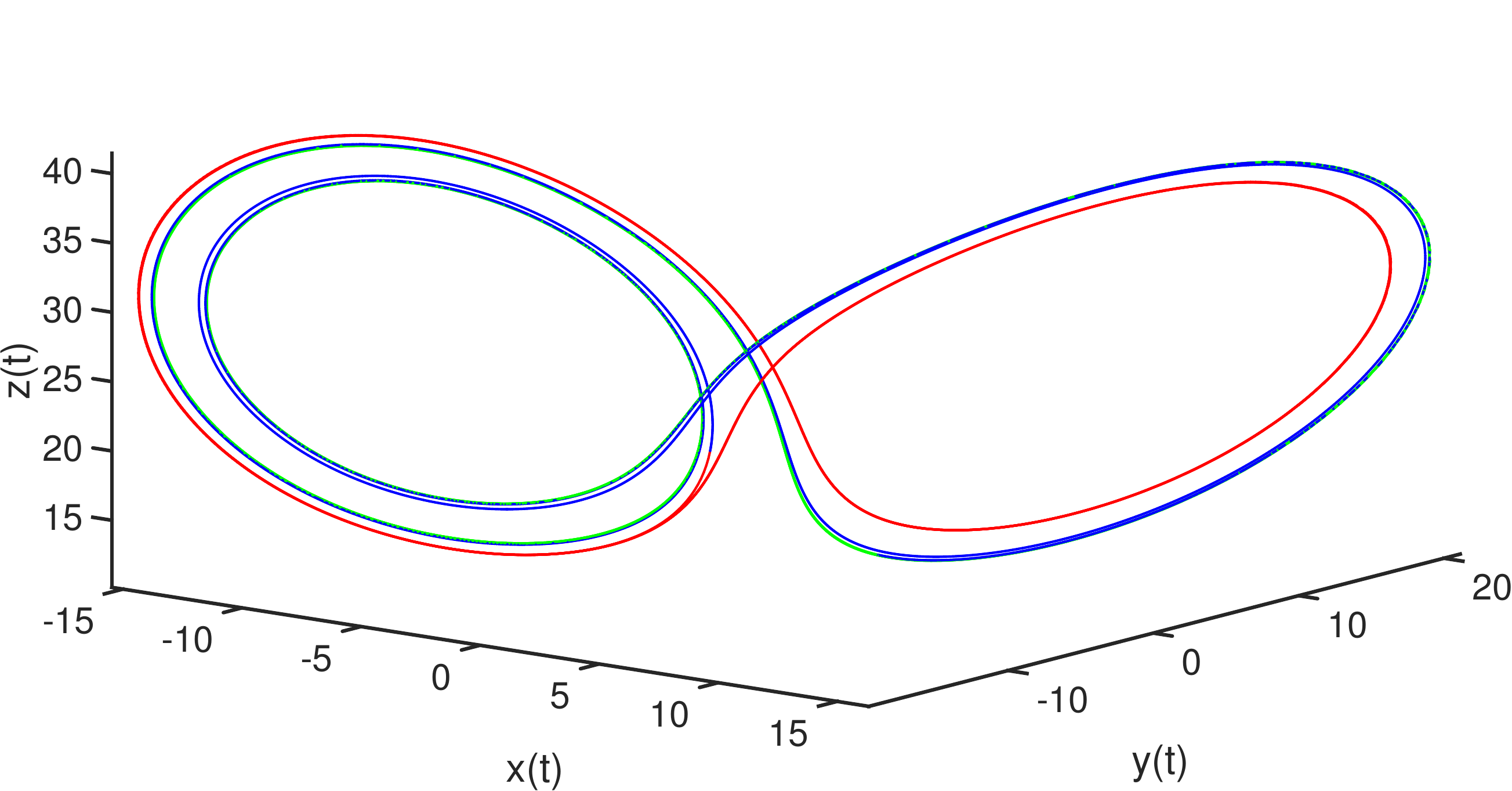}}}
\subfigure{{\includegraphics[width=0.5\textwidth]{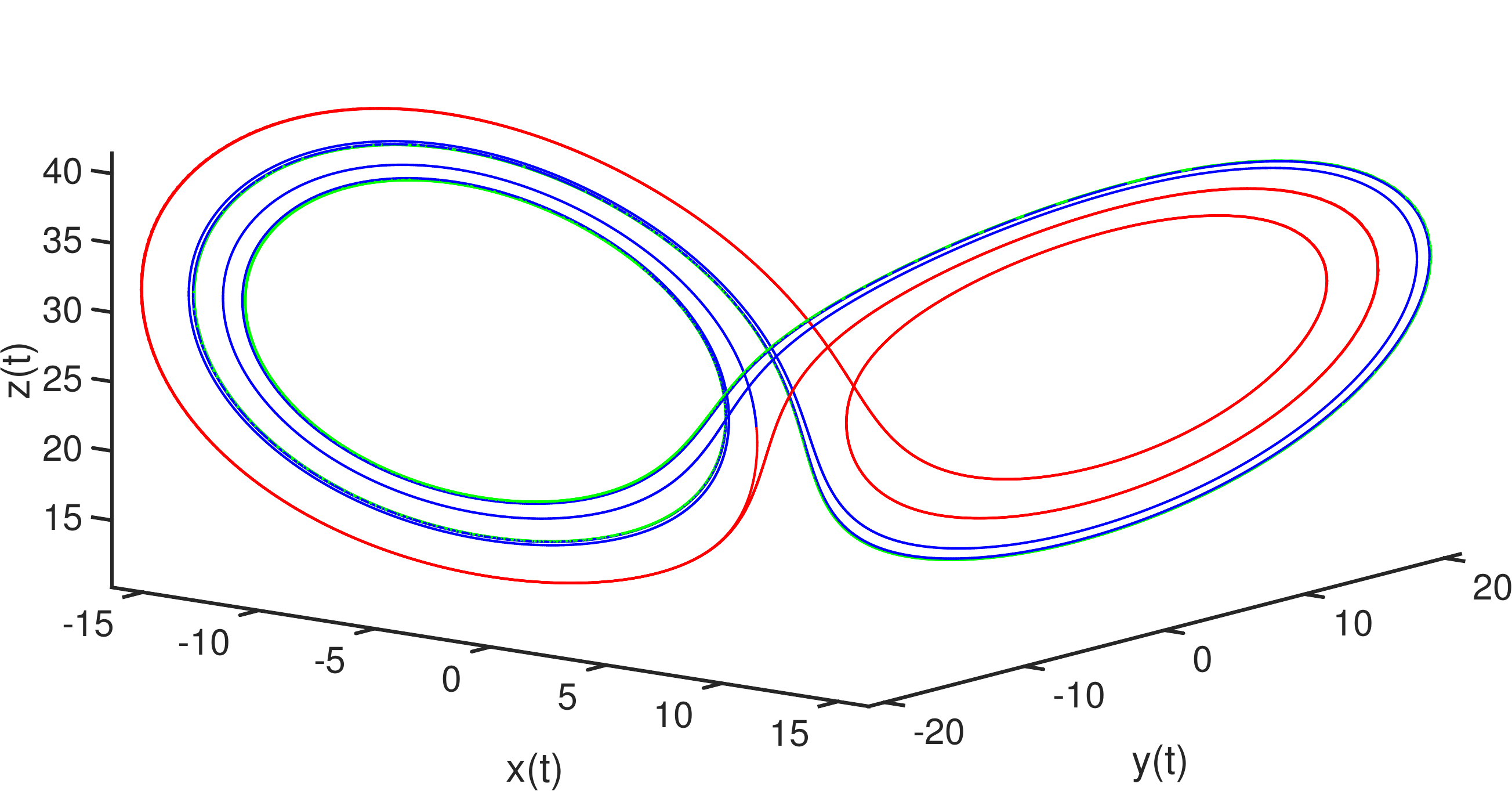}}} \\
\subfigure{{\includegraphics[width=0.5\textwidth]{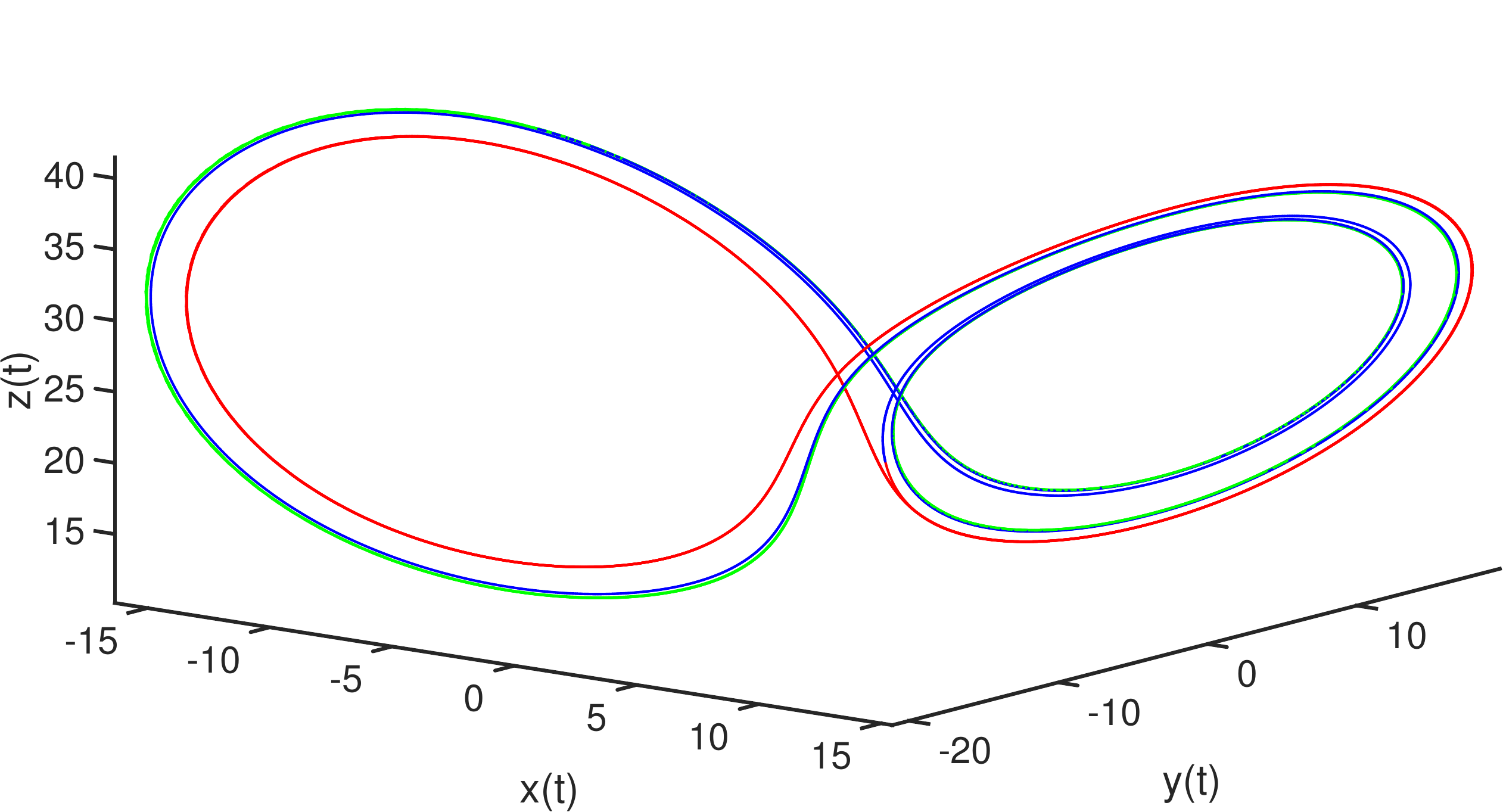}}}
\subfigure{{\includegraphics[width=0.5\textwidth]{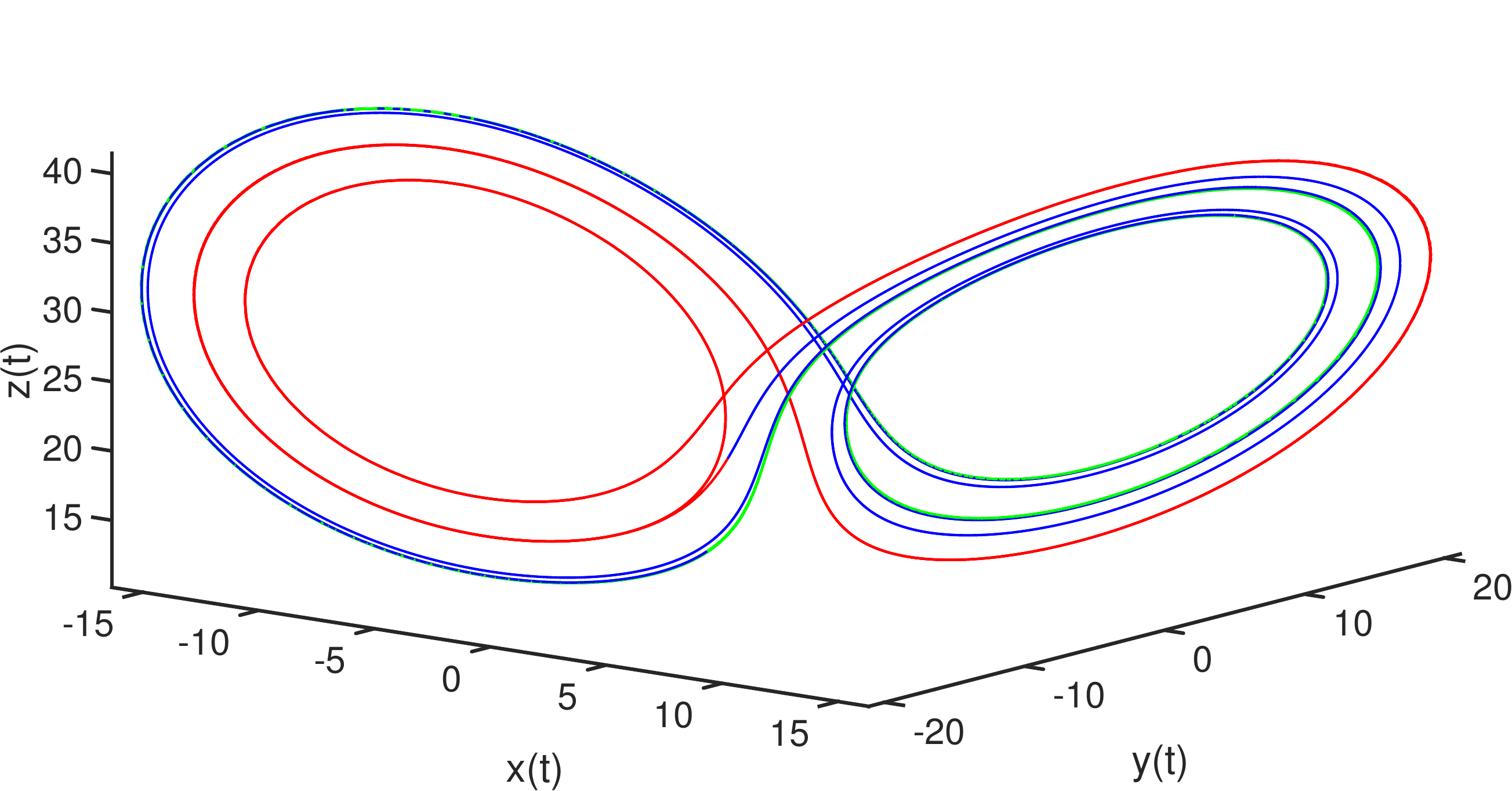}}}
\caption{ Six solutions to \eqref{eq:BVPinitial}. The first row represents the connecting orbit $I$ (left) and $II$ (right). Both connecting orbits accumulate to the periodic solution $AB$ at $-\infty$, connection $I$ accumulates to $AAB$ at $+\infty$ while $II$ accumulates to $ABB$. The Chebyshev arcs are represented in blue, while the arcs in red and green are integration-free (that is, these portions of the orbit are on the parameterized 
stable/unstable manifolds). and are calculated via the appropriate conjugacy relation. 
The second row represents the connecting orbit $III$ (left, between $AAB$ and $AB$) 
and $IV$ (right, between $AAB$ and $ABB$). The third row represents the connecting orbit
 $V$ (left, between $ABB$ and $AB$) and $VI$ (right, between $ABB$ and $AAB$).     
 } \label{fig:lorenzConnections}
\end{figure}
\end{center}

\begin{center}
\begin{figure}
\subfigure{{\includegraphics[width=0.5\textwidth]{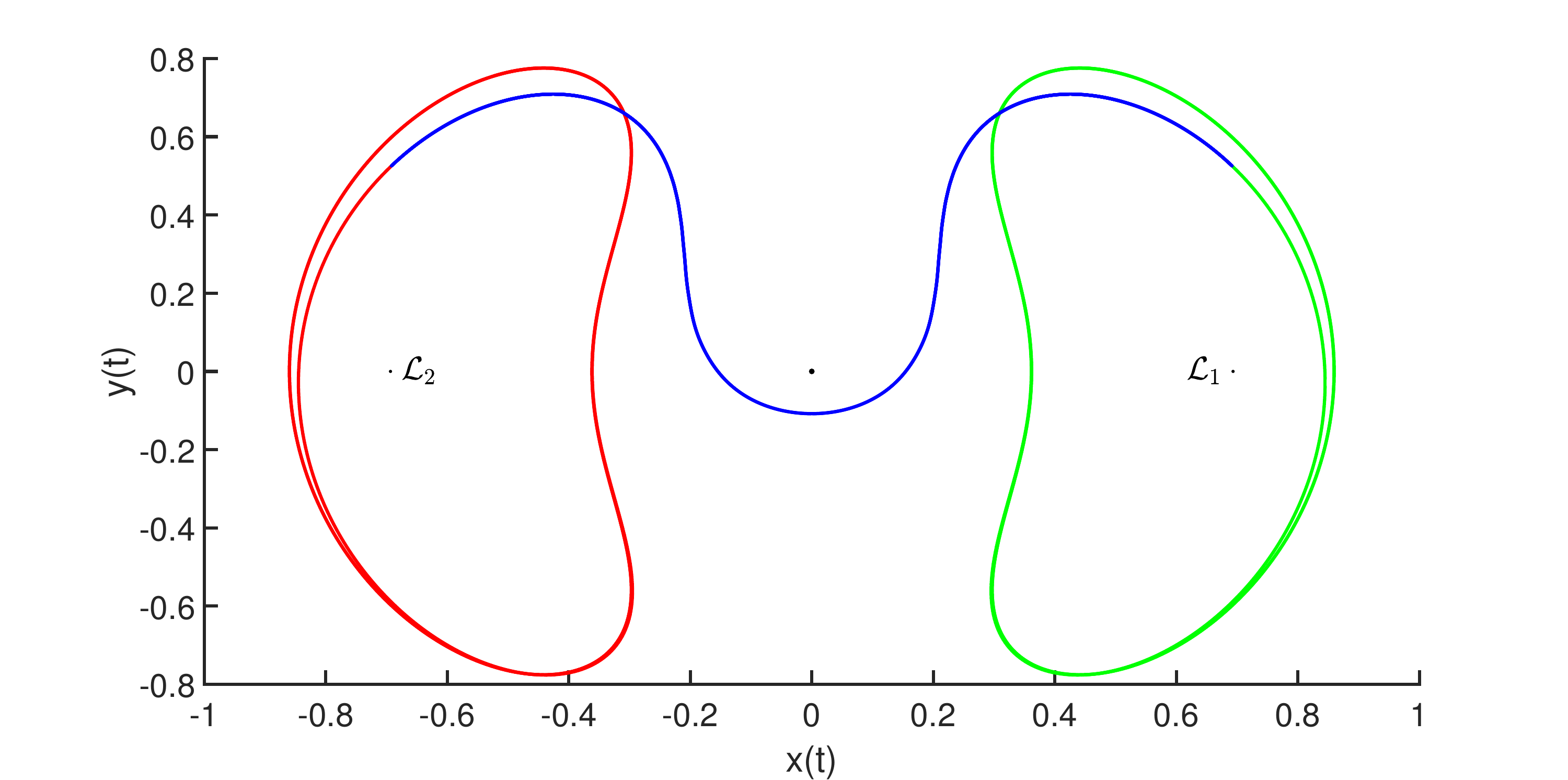}}}
\subfigure{{\includegraphics[width=0.5\textwidth]{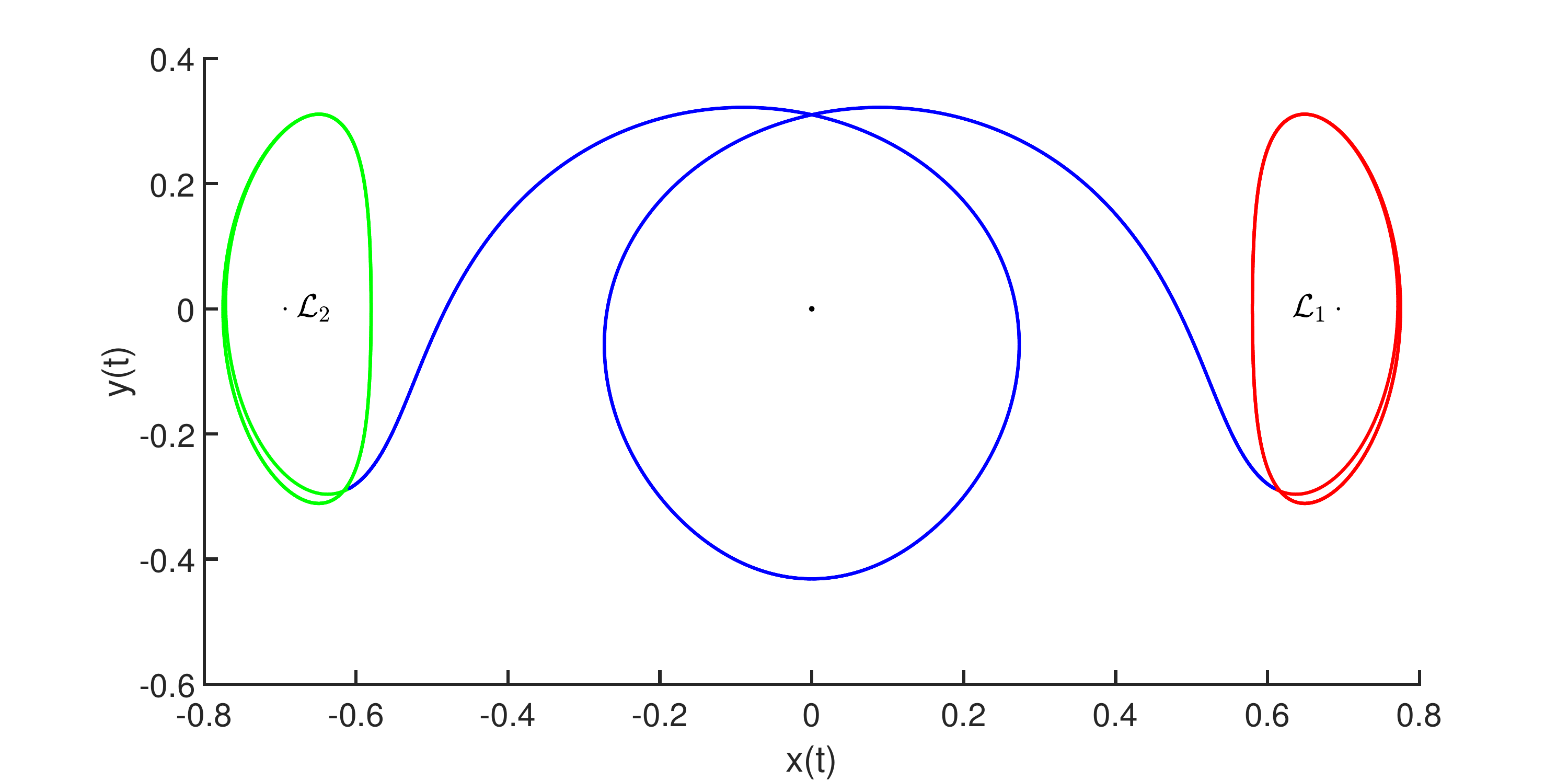}}}
\caption{ Heteroclinic connections between $\mathcal{L}_1$ and
 $\mathcal{L}_2$ planar Lyapunov families. 
 For both figures, the solution to the BVP is interpolated using 
 $3$ Chebyshev subdomains (blue) while green/red
 depicts the orbit segments on the
 the parameterized stable/unstable manifold. 
The connection on the left is for fixed energy level of $C = 2.5$ and has
no winding about the origin. The right frame illustrates 
a connection in the $C = 4$ energy level
with one winding about the origin. 
Both of these solutions are in the case $\mu=0$, 
that is, the classical Hill's lunar problem.  Then, due to the reversible 
symmetry of Hill's problem, there are symmetric connections running 
the other way (flipped about the x-axis) and hence a transverse heteroclinic 
cycle, implying the existence of Smale horse shoes. } \label{figure:HillsMu0}
\end{figure}
\end{center}

\begin{center}
\begin{figure}
\subfigure{{\includegraphics[width=0.48\textwidth]{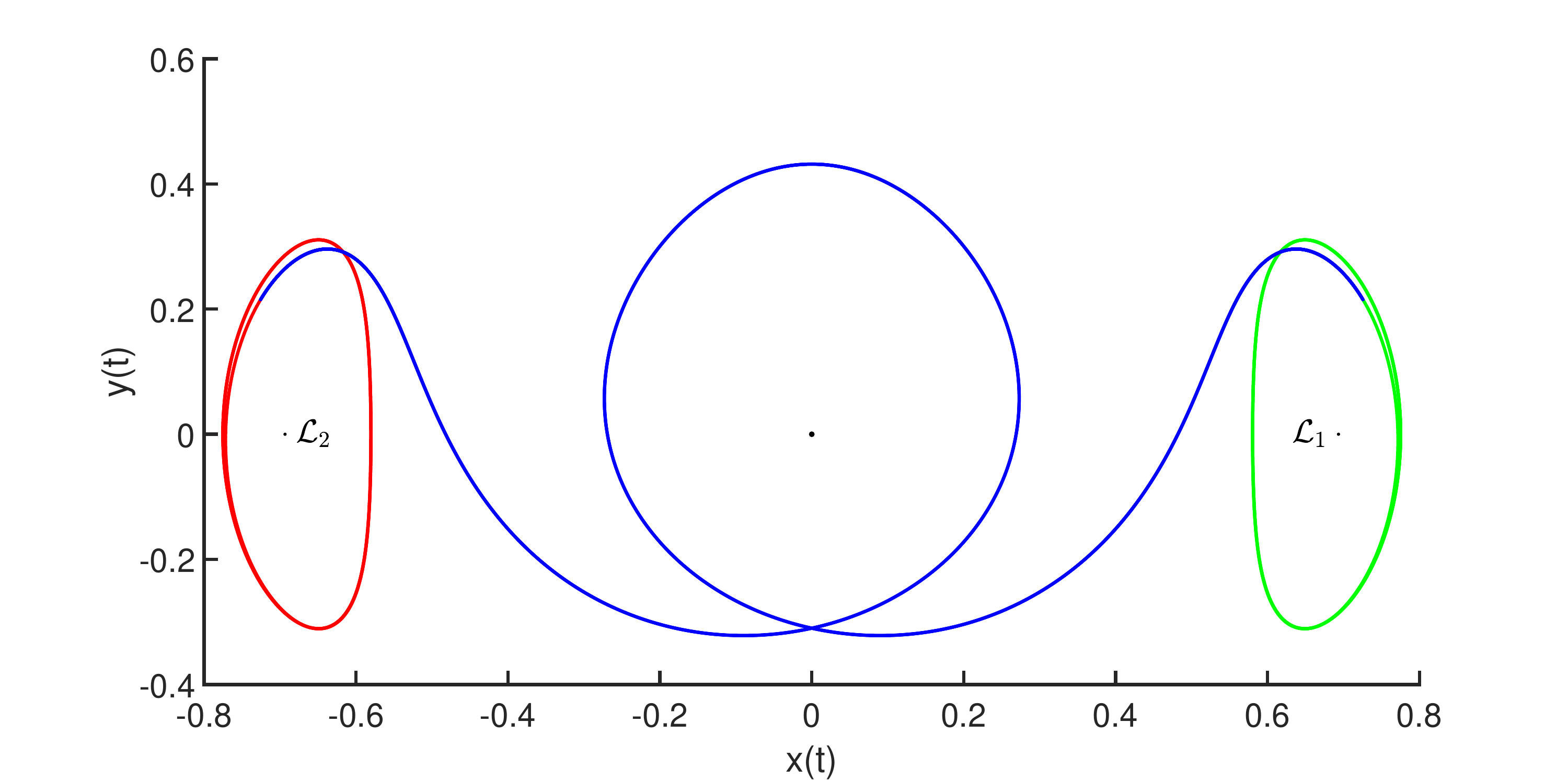}}}
\subfigure{{\includegraphics[width=0.48\textwidth]{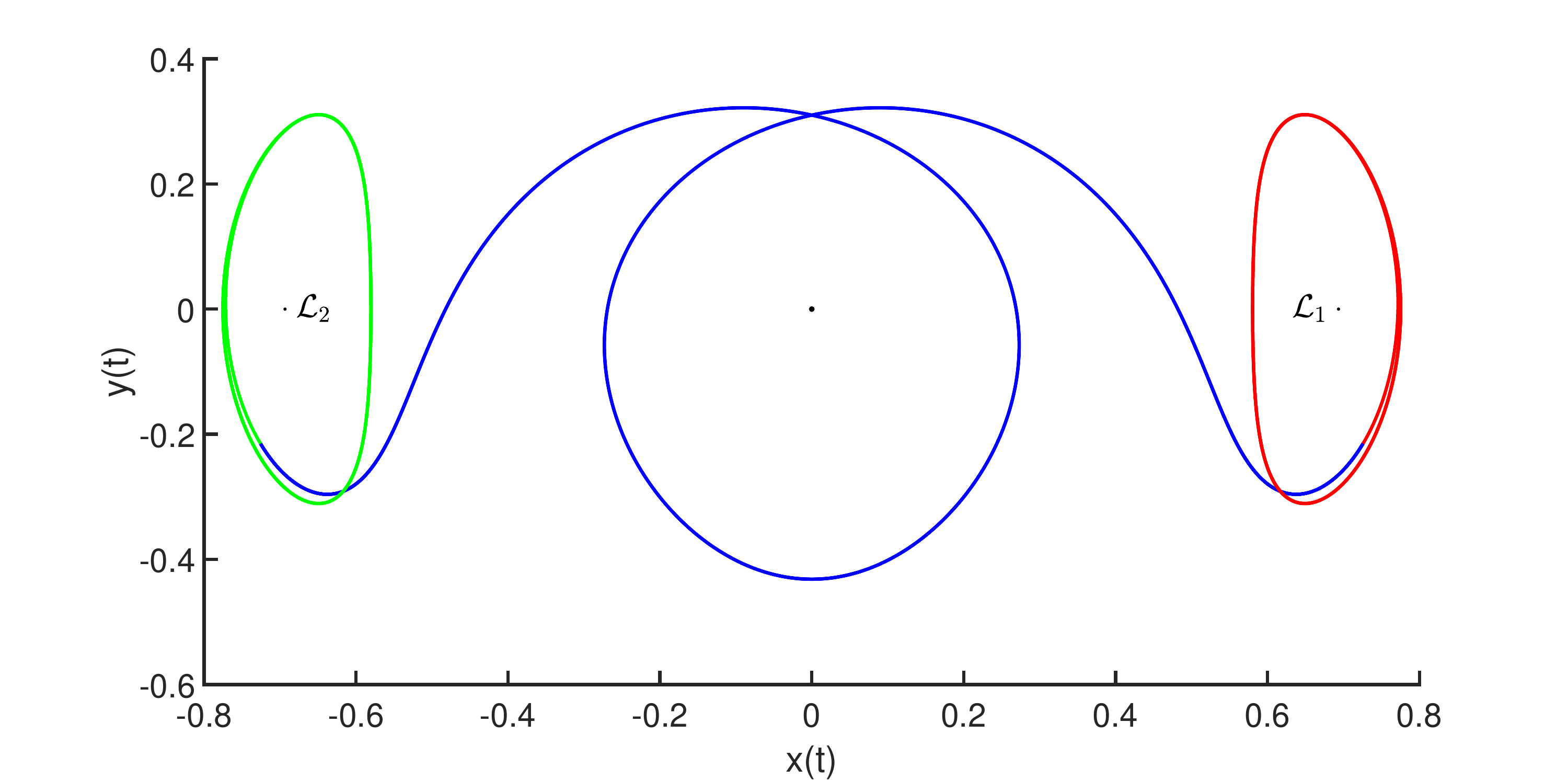}}}
\caption{ Heteroclinic connections between 
$\mathcal{L}_1$ and $\mathcal{L}_2$ planar Lyapunov orbits
at energy level of $C=4$ in the case $\mu=0.00095$. This setting corresponds to the 
mass ratio of the Jupiter-Sun system. We note that the periodic orbits and the pair 
of connections joining 
them are no longer symmetric with respect to the $y-$axis (though this 
asymmetry is difficult to detect visually).  For this reason we 
establish the existence of transverse connections in both directions (left and right 
frames).  Again, the heteroclinic cycle implies the existence of chaotic dynamics
in the energy level.} \label{figure:HillsMuJove}
\end{figure}
\end{center}

\subsection{Cycle-to-cycle connections in Hill's lunar problem} \label{sec:HillResults}
The following theorem, established using the techniques described above, 
gives the existence of a transverse heteroclinic cycle in 
Hill's Lunar problem (Hill restricted four body problem with $\mu = 0$)
with equal masses, when the Jacobi 
constant is $2.5$ or $4$.

\begin{theorem}\label{thm:ValidationE=4}
The lunar Hill's problem
admits transverse heteroclinic cycles between the $\mathcal{L}_1$
and $\mathcal{L}_2$ planar Lyapunov families for both $C = 2.5$ and 
$C = 4$.  
\end{theorem}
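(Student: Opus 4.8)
The plan is to assemble the cycle from three separately validated ingredients --- the Lyapunov periodic orbits at $\mathcal{L}_1$ and $\mathcal{L}_2$, their attached local stable/unstable manifolds, and a connecting orbit segment between them --- and then to use the reversibility of the $\mu = 0$ problem to promote a one-way connection into a full heteroclinic cycle. First I would fix each energy level $C \in \{2.5, 4\}$ and compute numerically a Lyapunov orbit at $\mathcal{L}_1$ and one at $\mathcal{L}_2$. Following the Hill-problem version of Example \ref{Ex:Orbit}, each such orbit is a zero of the Fourier-space operator $F = F_0 + G_0$ carrying the two unfolding parameters $\beta_1, \beta_2$ and the three phase/constraint conditions (the relaxed Poincar\'e plane, the polynomial constraint \eqref{eq:polyConstraint}, and the fixed Jacobi value $C$); the radii-polynomial scheme (Theorems \ref{Radii} and \ref{thm:contr}) then yields a true periodic orbit near the numerical one, together with the a-posteriori fact that $\beta_1 = \beta_2 = 0$, so the validated orbit genuinely solves \eqref{eq:HillPoly} and hence \eqref{eq:Hill}. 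Since $L_{1,2}$ are saddle-centers, for energies near those of $L_{1,2}$ these Lyapunov orbits carry exactly one stable and one unstable Floquet multiplier, and I would validate the associated exponent $\lambda$ and normal bundle $v(t)$ as a zero of the operator $F_1$ of Example \ref{Ex:Bundle}.

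Next I would construct the local unstable manifold $P$ of the $\mathcal{L}_1$ orbit and the local stable manifold $Q$ of the $\mathcal{L}_2$ orbit via the parameterization method: solve the homological equations \eqref{eq:coefficients} order by order in Fourier--Taylor space up to order $N = 8$, and close the tail with the contraction argument of Section \ref{sec:radPolyParmMethod}. Concretely this means verifying the Neumann hypothesis of Theorem \ref{Thm:InverseG} for all sufficiently large $\alpha$, dispatching the finitely many exceptional orders $\alpha \in \mathcal{A}$ by the numerical-inverse criterion \eqref{eq:CriteriaFinite}, and then applying Corollary \ref{cor:ResultTail} with the $Y$ and $Z(r)$ bounds (the Hill analogues of the Lorenz estimates displayed above) to obtain finite $r^P, r^Q$ with $\|P - \bar P\|_\infty < r^P$ and $\|Q - \bar Q\|_\infty < r^Q$, using $m = 30$ Fourier modes. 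The normalization $L$ of the normal bundle introduced in Example \ref{Ex:Bundle} is chosen so that $[-1,1]$ is a genuine domain for $P$ and $Q$, a fact which must itself be confirmed by the a-posteriori bounds.

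With the parameterizations in hand I would solve the connecting-orbit BVP. For each energy level I set up \eqref{eq:BVPChebyshevReady} for a segment from $P(t_u, \sigma_u)$ on the unstable manifold of the $\mathcal{L}_1$ orbit to $Q(t_s, \sigma_s)$ on the stable manifold of the $\mathcal{L}_2$ orbit, decompose it into $M = 3$ Chebyshev arcs as in Remark \ref{rem:ChebyshevDecomposition}, and recast the whole system --- including the scalar closing equation $\eta(y, t_s) = 0$ which balances the unknown flight time --- as a zero of an operator $G$ of the form \eqref{eq:ChebyshevProblemLorenz} adapted to the degree-five polynomial field \eqref{eq:HillPoly}. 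Validating this zero with Theorem \ref{Radii} gives a true heteroclinic segment in a small explicit neighborhood of the numerical one, and by Remark \ref{rem:transverse} the Newton--Kantorovich argument simultaneously delivers non-degeneracy and hence transversality of the intersection exactly as in \cite{MR3207723}. Finally, since $\mu = 0$ makes Hill's problem reversible under reflection in the $x$-axis, the mirror image of each validated $\mathcal{L}_1 \to \mathcal{L}_2$ connection is a transverse $\mathcal{L}_2 \to \mathcal{L}_1$ connection at the same energy; the two together are the claimed transverse heteroclinic cycle, for both $C = 2.5$ and $C = 4$.

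I expect the main obstacle to be the bookkeeping at the interface between the parameterized manifolds and the Chebyshev BVP: the boundary terms $P(t_u, \sigma_u)$ and $Q(t_s, \sigma_s)$ appearing in $G$ carry both the validated manifold errors $r^P, r^Q$ and the truncation error of evaluating a Fourier--Taylor series at a point, and all of these must propagate consistently through the derivative bounds $Z(r)$ so that the contraction still closes with a small radius. A secondary difficulty, particular to the HRFBP, is that the embedded field \eqref{eq:HillPoly} has degree-five nonlinearities coming from $z = (x^2 + y^2)^{-1/2}$, which inflates every $Y$ and $Z(r)$ estimate and forces the numerical orbits, bundles, manifolds, and arcs to be computed accurately enough that the resulting constants remain below one; keeping every validated object bounded away from collision $x = y = 0$, so that the polynomial embedding remains equivalent to \eqref{eq:Hill}, is part of the same verification.
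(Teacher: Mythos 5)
Your proposal is correct and follows essentially the same route as the paper: validate the two Lyapunov orbits (with the unfolding parameters and the energy/collision constraints), their bundles and Fourier--Taylor parameterizations via Theorem \ref{Thm:InverseG} and Corollary \ref{cor:ResultTail}, solve the multi-arc Chebyshev BVP of Section \ref{Section:Orbit} with $M=3$ arcs, obtain transversality from the Newton--Kantorovich non-degeneracy as in Remark \ref{rem:transverse}, and close the cycle using the reversing reflection symmetry of the $\mu=0$ problem. The only discrepancies are cosmetic (the paper validates the connection from the unstable manifold of the $\mathcal{L}_2$ orbit to the stable manifold of the $\mathcal{L}_1$ orbit, and uses $4$ rather than $8$ Taylor modes for this example).
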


Note that there is nothing special about the energy levels $C= 2.5$ and $C= 4$.
Many similar theorems could be proven using the same methods.  

For this theorem, we prove first the 
existence a pair of periodic orbits in the $\mathcal{L}_1$ and $\mathcal{L}_2$ 
Lyapunov families for both $C = 2.5$ and $C=4$. The unstable manifold 
$P_{\mathcal{L}_2}$, solving Equation \eqref{eq:parmMethod}
is approximated by $\bar P_{\mathcal{L}_2}$ with $30$ Fourier modes and $4$ Taylor modes,
so that
\[
\left\| P_{\mathcal{L}_2} -\bar P_{\mathcal{L}_2} \right\|_\infty \leq 1.5175\cdot 10^{-11}.
\] 
Similarly, the stable manifold $Q_{\mathcal{L}_1}$, attached to the periodic solution
around $\mathcal{L}_1$.
 We then show that there exists $x_i$, a solution to \eqref{eq:BVPinitial}
 for each energy level. The connecting orbit segment uses 
 $3$ Chebyshev arcs each approximated using $75$ modes, and satisfying
\begin{align*}
\left\| x - \bar x \right\|_{\infty} < 5.0909 \cdot 10^{-7}. \\
\end{align*}
The results are illustrated in Figure \ref{figure:HillsMu0}.

A second set of results, proved similarly, are illustrated in Figure 
\ref{figure:HillsMuJove}, for the Hill Restricted Four Body Problem 
with $\mu = 0.00095$ (the Sun-Jupiter value). Here, we prove the 
existence of transverse heteroclinic connections running both
directions between the $\mathcal{L}_1$ and $\mathcal{L}_2$
planar Lyapunov orbits in the $C = 4$ energy level.  The 
non-zero value of $\mu$ breaks the symmetry of the problem, and 
it is necessary for us to prove both connecting orbits (we cannot
infer one from the other).  This done, we conclude that there is 
an asymmetric heteroclinic cycle, and hence chaotic dynamics 
between the two cycles.  

\appendix
{\tiny

\section{A-posteriori analysis: radii polynomials for nonlinear operators between Banach spaces} \label{Radii}

The goal of the Radii polynomials is to prove that a given operator $T:E \to E$ is a uniform contraction over a subset of $E=E^{(1)}\times \hdots \times E^{(n)}$, a Banach space. The subset provided by this method is a small ball around the numerical approximation of the solution to the problem $T(x)=x$. To do so, we have to define $Y$ and $Z$ such as
\begin{align}\label{eq:Bounds}	 
	\left\| (T(\overa)-\overa)^{(i)} \right\| \leq Y^{(i)} \hspace{0.5cm} &\mbox{and} \hspace{0.5cm} \displaystyle \sup_{b\in B(r)} \left\|\left( DT(\overa +b) \right)^{(i)}\right\|_{\mathcal{B}(E)}  \leq Z^{(i)}(r),
\end{align}
for all $i=1,\hdots,n$. The norm used in theses bounds will depend on the space $E^{(i)}$ and might not be the same for all $i=1,\hdots,n$. Appropriates norms will be explicitly shown for every step of the proof in the corresponding sections. The following theorem is using theses bounds to provide the needed result about $T$.

\begin{theorem}\label{thm:contr}
Let $T$ an operator satisfying the bounds defined in \eqref{eq:Bounds} for a given $\overa$. If \\ \noindent $\left\| Y + Z(r) \right\|_\infty<r$, then for $B:= B_{\overa}(r) $, we have that $T:B \to B$ is a contraction.
\end{theorem}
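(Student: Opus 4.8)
The plan is to verify the two properties that together constitute the assertion that $T$ is a contraction on the closed ball $B = B_{\overa}(r)$: namely that $T$ maps $B$ into itself, and that $T$ is Lipschitz on $B$ with constant strictly less than one. The only analytic input needed is the mean value inequality for the (Fr\'echet differentiable) map $T$, applied componentwise, together with the two bounds recorded in \eqref{eq:Bounds}.

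For the self-map property, fix $x \in B$ and write $x = \overa + b$ with $\|b\| \le r$. By the fundamental theorem of calculus along the segment joining $\overa$ to $x$,
\[
T(x) - \overa = (T(\overa) - \overa) + \int_0^1 DT(\overa + s b)\, b \; ds,
\]
and since $\overa + s b \in B$ for all $s \in [0,1]$, the $E^{(i)}$ norm of the $i$-th component of the integral is at most $\sup_{s \in [0,1]} \|(DT(\overa + sb))^{(i)}\|_{\mathcal{B}(E)} \cdot \|b\| \le Z^{(i)}(r)\, r \le Z^{(i)}(r)$, using $r < 1$. Combined with $\|(T(\overa) - \overa)^{(i)}\| \le Y^{(i)}$ and the triangle inequality in $E^{(i)}$, this gives $\|(T(x) - \overa)^{(i)}\| \le Y^{(i)} + Z^{(i)}(r)$ for every $i$; taking the maximum over $i$ and using $\|Y + Z(r)\|_\infty < r$ shows $\|T(x) - \overa\| < r$, i.e.\ $T(x) \in B$.

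For the Lipschitz estimate, take $x, y \in B$. The segment joining them lies in $B$ by convexity of the ball, so
\[
T(x) - T(y) = \int_0^1 DT(y + s(x - y))\,(x - y) \; ds,
\]
whence $\|(T(x) - T(y))^{(i)}\| \le Z^{(i)}(r)\, \|x - y\|$ componentwise, and therefore $\|T(x) - T(y)\| \le \|Z(r)\|_\infty\, \|x - y\|$. Since $\|Z(r)\|_\infty \le \|Y + Z(r)\|_\infty < r < 1$, the restriction of $T$ to $B$ is Lipschitz with constant strictly below one, and together with the self-map property this completes the argument.

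There is no genuine obstacle here — the statement is a standard Newton--Kantorovich / radii-polynomial lemma. The only points requiring attention are that the norms on the factors $E^{(i)}$ may differ from one another while the norm on $E$ is their maximum, so that every supremum, integral, and triangle inequality above must be read in the appropriate factor; and that the integral representations of $T(x) - \overa$ and $T(x) - T(y)$ presuppose differentiability of $T$ along segments inside $B$, which in all of our applications is immediate because $T$ is assembled from finite polynomial Cauchy--convolution products together with explicit Neumann-series (or numerically validated) inverses.
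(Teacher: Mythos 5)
Your argument is the standard mean-value-inequality proof of the radii polynomial lemma, and it is the intended one: the paper does not prove Theorem \ref{thm:contr} itself but defers to \cite{HLM}, where exactly this argument appears. Both halves of your proof are set up correctly, including the care about the componentwise norms and the differentiability needed for the integral representations.

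There is, however, one step you should not pass over silently: you invoke $r<1$ twice (once to get $Z^{(i)}(r)\,r\le Z^{(i)}(r)$ in the self-map estimate, and once to conclude $\left\|Z(r)\right\|_\infty<r<1$ for the Lipschitz constant), but $r<1$ is not a hypothesis of the theorem. With $Z^{(i)}(r)$ defined as in \eqref{eq:Bounds} as a bound on the \emph{operator norm} of $DT$ over the ball, the condition $\left\|Y+Z(r)\right\|_\infty<r$ alone does not yield a contraction when $r\ge 1$: take $T(x)=\overa+\lambda(x-\overa)$ with $\lambda=2$, $Y=0$, $Z(r)=2$, and $r=4$; then $Y+Z(r)=2<4=r$, yet $T$ has Lipschitz constant $2$ and does not map $B_{\overa}(4)$ into itself. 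So either the theorem must be read with the implicit hypothesis $r\le 1$ (harmless in practice, since the validated radii here are of order $10^{-7}$ or smaller), or $Z(r)$ must be understood to already carry a factor of $r$, i.e.\ to bound $\sup_{b,c\in B(r)}\|DT(\overa+b)c\|$ --- note that Corollary \ref{cor:ResultTail} uses the former convention explicitly by writing the condition as $B_G\left(Y+Z(r^\infty)\,r^\infty\right)<r^\infty$. Your proof is correct once you state which of these readings you adopt; as written, the inequality $Z^{(i)}(r)\,r\le Z^{(i)}(r)$ is the one step that would fail without it.
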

This theorem provides the existence of a fixed point of the operator $T$ considered for the computation of the bounds \eqref{eq:Bounds}. In order to apply this result, every problem must be written as a Newton like operator. The step to do so depending on the object and the system, it will not be introduced in the general case. Though, it will be explicitly done in every section. \\
For a given operator well defined on some Banach space $E$, the bounds \eqref{eq:Bounds} can be computed analytically for an arbitrary $r$. The goal is now to find a way to obtain $r$ such as the theorem is verified. This is why we introduce the radii polynomials. 
\begin{definition}\label{PolRayon}
Let $Y$ and $Z$ bounds on an operator $T$ as given by \eqref{eq:Bounds}, we define the Radii polynomials:
\[
	p^{(i)}(r):= Y^{(i)} + Z^{(i)}(r) -r, \; i=1,\hdots,n
\]
\end{definition}
One can see that the Radii polynomials depend on $Y$ and $Z$, which are not unique. But the smaller these bounds are, the easier it will be to prove that the operator $T$ is a contraction over a ball around the approximation. 
The following result will show how the Radii polynomials gives us the value of $r$ for which we can apply theorem \eqref{thm:contr}.

\begin{proposition}\label{prop:Radii}
Let $T:E\to E$ and $\overa$ an approximation of its fixed point. Consider $p^{(i)}(r)$, the radii polynomials defined by bounds satisfying \eqref{eq:Bounds} and let
\[
	\mathcal{I}= \lbrace r>0: p^{(i)}(r)<0, \hspace{0.3cm} \forall i \rbrace.
\]
If $\mathcal{I} \neq \emptyset$, then $T:B_{\overa}(r) \to B_{\overa}(r)$ is a contraction for every $r \in \mathcal{I}$.
\end{proposition}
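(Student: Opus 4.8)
The plan is to reduce the statement directly to Theorem \ref{thm:contr}, since the radii polynomials are nothing more than a repackaging of the scalar inequality appearing there. First I would fix an arbitrary $r \in \mathcal{I}$. By the definition of $\mathcal{I}$ this means $p^{(i)}(r) < 0$ for every $i = 1, \ldots, n$, and unpacking Definition \ref{PolRayon} this is in turn equivalent to $Y^{(i)} + Z^{(i)}(r) < r$ for each such $i$.

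Next I would pass to the maximum over $i$. Since these finitely many strict inequalities hold simultaneously, their maximum satisfies $\max_{1 \le i \le n}\left(Y^{(i)} + Z^{(i)}(r)\right) < r$, i.e. $\left\| Y + Z(r) \right\|_\infty < r$ in the notation of Theorem \ref{thm:contr}. With this choice of $r$ the hypothesis of Theorem \ref{thm:contr} is therefore met, and that theorem yields that $T \colon B_{\bar a}(r) \to B_{\bar a}(r)$ is a contraction. Since $r \in \mathcal{I}$ was arbitrary, the conclusion holds for every $r \in \mathcal{I}$, which is exactly the claim.

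There is essentially no real obstacle in this argument: it is a bookkeeping translation between the componentwise sign condition on the $p^{(i)}$ and the $\infty$-norm condition consumed by Theorem \ref{thm:contr}. The only point deserving a moment's care is that the bounds $Y^{(i)}$ and $Z^{(i)}(r)$ from \eqref{eq:Bounds} must be valid for the chosen $r$, so that $\overline{B_{\bar a}(r)}$ lies inside the set on which those estimates were derived; this is precisely what is presupposed by $r \in \mathcal{I}$. In practice each $p^{(i)}$ is polynomial (typically quadratic, for polynomial nonlinearities) in $r$ with $p^{(i)}(0) = Y^{(i)} \ge 0$, so $\mathcal{I}$ is a finite intersection of open subsets of $(0,\infty)$ and verifying $\mathcal{I} \ne \emptyset$ together with exhibiting an admissible $r$ is a finite, computer-assisted check.
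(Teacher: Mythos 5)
Your proof is correct and follows exactly the intended route: the paper itself gives no written proof (it defers to the cited reference \cite{HLM}), because the proposition is precisely this immediate reduction of the componentwise condition $p^{(i)}(r)<0$ to the hypothesis $\left\| Y + Z(r)\right\|_\infty < r$ of Theorem \ref{thm:contr}. Nothing further is needed.
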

Proofs and further explanation about the background, and basics examples can be found in \cite{HLM}. In the the following section, the background will be applied in order to define the radii polynomials granting validation for approximations of the Floquet exponent for a given periodic orbit of the Lorenz system.

%
%
% Banach space background
%
%
%

\section{Some Banach spaces of infinite sequences} \label{sec:BanachSpaces}
Many approximates are done with Fourier series, the coefficients of those series rely in a particular space that we now introduce.
\begin{definition}\label{def:EllnuNorm}
We denote by $\ellnu$ the set of sequences $a=\left(a_k\right)_{k \in \mathbb{Z}}$ such as, for some fixed $\nu \geq 1$, the sum
\[
\sum_{k \in \mathbb{Z}} |a_k| \nu^{|k|} =: \left\| a \right\|_{1,\nu}
\]
converges. One can easily see that this sum defines a norm. 
\end{definition}
\begin{remark}\label{remark:subset}
Let $\nu' \geq \nu$, then
\[
\sum_{k \in \mathbb{Z}} |a_k| \nu^{|k|} \leq \sum_{k \in \mathbb{Z}} |a_k| \nu'^{|k|},
\]
thus if $a\in \ell_{\nu'}^1$ we have $a \in \ellnu$. Since it is true for every element of $\ell_{\nu'}^1$, we have $\ell_{\nu'}^1 \subset \ellnu$. Furthermore, for $a \in \ell_{\nu'}^1$, one can easily see from last inequality that we have $\|a\|_{1,\nu} \leq \|a\|_{\nu'}$.
\end{remark}
The choice of this space is justified by the fact that it is a Banach algebra under the convolution product. The proof is done in the following lemma.
\begin{lemma}\label{LemmaNorm}
For $a,b \in \ellnu$, $\left\| a \ast b \right\|_{1,\nu} \leq \left\| a \right\|_{1,\nu} \left\| b \right\|_{1,\nu} $ 
\end{lemma}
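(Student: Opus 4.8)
The statement to prove is Lemma \ref{LemmaNorm}: for $a, b \in \ellnu$, we have $\|a \ast b\|_{1,\nu} \leq \|a\|_{1,\nu} \|b\|_{1,\nu}$, i.e., $\ellnu$ is a Banach algebra under the discrete convolution product.

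The plan is to work directly from the definition of the convolution product and the $\ellnu$ norm. The discrete convolution is $(a \ast b)_k = \sum_{k_1 + k_2 = k} a_{k_1} b_{k_2}$, summed over $k_1, k_2 \in \mathbb{Z}$. First I would write out $\|a \ast b\|_{1,\nu} = \sum_{k \in \mathbb{Z}} |(a\ast b)_k| \nu^{|k|} = \sum_{k} \left| \sum_{k_1+k_2=k} a_{k_1} b_{k_2} \right| \nu^{|k|}$. Then apply the triangle inequality inside to pull the absolute value through the inner sum, giving the bound $\sum_k \sum_{k_1+k_2=k} |a_{k_1}| |b_{k_2}| \nu^{|k|}$.

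The key observation is the submultiplicativity of the weight: since $|k| = |k_1 + k_2| \leq |k_1| + |k_2|$ and $\nu \geq 1$, we have $\nu^{|k|} \leq \nu^{|k_1|} \nu^{|k_2|}$. Substituting this in, the double sum becomes $\sum_k \sum_{k_1+k_2=k} |a_{k_1}| \nu^{|k_1|} |b_{k_2}| \nu^{|k_2|}$. Now I would re-index: summing over all $k$ and all pairs with $k_1 + k_2 = k$ is the same as summing over all pairs $(k_1, k_2) \in \mathbb{Z}^2$ freely, which factors as $\left( \sum_{k_1} |a_{k_1}| \nu^{|k_1|} \right)\left( \sum_{k_2} |b_{k_2}| \nu^{|k_2|} \right) = \|a\|_{1,\nu} \|b\|_{1,\nu}$. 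The rearrangement is justified by Tonelli/Fubini since all terms are non-negative, which also shows the convolution sum converges absolutely and $a \ast b \in \ellnu$.

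I do not expect any real obstacle here — this is a standard weighted-$\ell^1$ Banach algebra argument and the only thing to be careful about is justifying the interchange of summation order (non-negativity makes this automatic) and noting explicitly where $\nu \geq 1$ is used (in the weight inequality $\nu^{|k_1+k_2|} \leq \nu^{|k_1|}\nu^{|k_2|}$). I would present it as a short chain of (in)equalities in a single \texttt{align*} block.

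\begin{proof}
Let $a, b \in \ellnu$. Using the definition of the convolution product, the triangle inequality, the submultiplicativity of the weight (valid since $|k_1 + k_2| \leq |k_1| + |k_2|$ and $\nu \geq 1$), and the fact that all summands are non-negative so that the order of summation may be interchanged, we compute
\begin{align*}
\left\| a \ast b \right\|_{1,\nu}
&= \sum_{k \in \mathbb{Z}} \left| \sum_{\substack{k_1 + k_2 = k \\ k_1, k_2 \in \mathbb{Z}}} a_{k_1} b_{k_2} \right| \nu^{|k|} \\
&\leq \sum_{k \in \mathbb{Z}} \sum_{\substack{k_1 + k_2 = k \\ k_1, k_2 \in \mathbb{Z}}} |a_{k_1}| \, |b_{k_2}| \, \nu^{|k_1 + k_2|} \\
&\leq \sum_{k \in \mathbb{Z}} \sum_{\substack{k_1 + k_2 = k \\ k_1, k_2 \in \mathbb{Z}}} |a_{k_1}| \nu^{|k_1|} \, |b_{k_2}| \nu^{|k_2|} \\
&= \left( \sum_{k_1 \in \mathbb{Z}} |a_{k_1}| \nu^{|k_1|} \right) \left( \sum_{k_2 \in \mathbb{Z}} |b_{k_2}| \nu^{|k_2|} \right) \\
&= \left\| a \right\|_{1,\nu} \left\| b \right\|_{1,\nu}.
\end{align*}
In particular the series defining $a \ast b$ converge absolutely and $a \ast b \in \ellnu$, so that $\ellnu$ is a Banach algebra under the convolution product.
\end{proof}
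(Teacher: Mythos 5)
Your proof is correct and is the standard weighted-$\ell^1$ Banach algebra argument: triangle inequality, the weight bound $\nu^{|k_1+k_2|}\leq\nu^{|k_1|}\nu^{|k_2|}$ (using $\nu\geq 1$), and Tonelli to factor the double sum. The paper itself omits the proof of Lemma~\ref{LemmaNorm} (deferring to its references), and your argument is precisely the one intended, with the interchange of summation properly justified by non-negativity.
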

This approximate will be really useful for approximation including convolution products. For every objects studied in this paper, convolution products of sequences from different spaces will occur, the following remark will be important to understand that the estimates from Lemma \ref{LemmaNorm} still stand in a slightly modified version.
\begin{remark}\label{remark:boundNuNuprime}
Let $a \in \ellnu$ and $b \in \ell_{\nu'}^1$. Let $\tilde{\nu} \bydef \min\lbrace \nu, \nu' \rbrace$, then
\begin{equation*}
  \left\| a \ast b \right\|_{\tilde{\nu}} \leq \left\| a \right\|_{\tilde{\nu}} \left\| b \right\|_{\tilde{\nu}} \leq \left\| a \right\|_{1,\nu} \left\| b \right\|_{1,\nu'}.
\end{equation*}
Convolution terms involving coefficients from the previous proof will occur in some of the steps described in this paper. However, different values of $\nu$ might be used for every proof. Thus, in order to make sure that every estimates are valid we must use a smaller value of $\nu$ than the one for the previous proof so we can assume that the estimate is valid in the right space.
\end{remark}

Furthermore, the dual space is isometrically isomorphic to a space which is well known and is easy to work with.
\begin{definition}
$\ell_\nu^\infty = \left\lbrace c=\lbrace c_n\rbrace_{n\in \mathbb{Z}} : c_k \in \mathbb{C}, \: \forall k \in \mathbb{Z}, \mbox{ and }\left\| c \right\|_{\infty,\frac{1}{\nu}} < \infty \right\rbrace$, where $\left\| c \right\|_{\infty,\frac{1}{\nu}}= \displaystyle \sup_{n \in \mathbb{Z}} \frac{|c_n|}{\nu^{|n|}}$.
\end{definition}
This space is also useful to bound sums such as in the following lemma.
\begin{lemma}\label{lemmaForConvo}
Suppose that $a \in \ellnu$ and $c \in \ell_\nu^\infty$. then
\[
\left| \sum_{k \in \mathbb{Z}} c_k a_k \right| \leq \sum_{k \in \mathbb{Z}} |c_k| |a_k| \leq \|c \|_{\infty,\frac{1}{\nu}} \|a \|_{1,\nu}
\]
\end{lemma}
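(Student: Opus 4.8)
The plan is to prove this as an elementary H\"older-type (dual pairing) estimate; there is essentially no difficulty, and the only point deserving a word of care is absolute convergence. First I would note that the left inequality, $\left| \sum_{k \in \mathbb{Z}} c_k a_k \right| \leq \sum_{k \in \mathbb{Z}} |c_k| |a_k|$, is just the triangle inequality for an absolutely convergent series of complex numbers, so the substantive content is the right inequality together with the verification that $\sum_k c_k a_k$ converges absolutely.

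For the right inequality I would factor each term as
\[
|c_k|\,|a_k| = \frac{|c_k|}{\nu^{|k|}}\,\bigl( |a_k|\,\nu^{|k|} \bigr),
\]
bound the first factor uniformly in $k$ by $\|c\|_{\infty,\frac{1}{\nu}} = \sup_{n\in\mathbb{Z}} |c_n|/\nu^{|n|}$ (which is finite by hypothesis since $c \in \ell_\nu^\infty$), and then sum over $k \in \mathbb{Z}$ to obtain
\[
\sum_{k\in\mathbb{Z}} |c_k|\,|a_k| \;\leq\; \|c\|_{\infty,\frac{1}{\nu}} \sum_{k\in\mathbb{Z}} |a_k|\,\nu^{|k|} \;=\; \|c\|_{\infty,\frac{1}{\nu}}\,\|a\|_{1,\nu} \;<\; \infty,
\]
the last quantity being finite because $a \in \ellnu$. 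This chain simultaneously establishes that $\sum_k c_k a_k$ converges absolutely, which is exactly what legitimizes the first inequality and makes the pairing well defined.

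The only bookkeeping I would spell out is that $\nu \geq 1$ forces $\nu^{|k|} \geq 1$, so every series appearing above is a series of nonnegative terms dominated by a convergent one and all rearrangements are harmless; there is no genuine obstacle. I would also remark, without proof, that this estimate is precisely the inequality underlying the isometric identification of $\ell_\nu^\infty$ (with the norm $\|\cdot\|_{\infty,\frac{1}{\nu}}$) as the dual of $\ellnu$, so the lemma can alternatively be read off from that identification, but the direct computation above is self-contained and sharp.
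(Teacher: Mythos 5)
Your proof is correct, and the paper itself omits the argument entirely (it declares the lemma ``straightforward'' and leaves it to the reader), so your triangle-inequality-plus-weight-factoring computation is exactly the intended standard argument. Nothing to add.
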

The proof is straightforward, thus it is left to the reader to verify that this lemma stands. It is useful to get sharp bounds on convolution products when it involves a finite dimension approximation, which can be seen as an element of $\ell_\nu^\infty$. The following theorem states that we can study the dual of $\ellnu$ spaces through the $\ell_\nu^\infty$ space. Its proof can be found in \cite{HLM}.
\begin{theorem}
For $\nu \geq 1$, the space $\ell_\nu^\infty$ is isometrically isomorphic to $(\ellnu)^*$.
\end{theorem}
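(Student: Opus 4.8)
The plan is to exhibit the isometric isomorphism explicitly and verify its defining properties one at a time. Define $\Phi \colon \ell_\nu^\infty \to (\ellnu)^*$ by
\[
\Phi(c)(a) = \sum_{k \in \mathbb{Z}} c_k a_k, \qquad c \in \ell_\nu^\infty, \quad a \in \ellnu,
\]
and then check, in order, that $\Phi$ is well defined and bounded, that it is norm preserving (hence injective), and that it is onto; recall that an isometric linear surjection is automatically an isometric isomorphism.

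For the first point, Lemma \ref{lemmaForConvo} already does the work: it gives absolute convergence of the bi-infinite series together with the estimate $|\Phi(c)(a)| \le \|c\|_{\infty,\frac{1}{\nu}} \|a\|_{1,\nu}$, so $\Phi(c) \in (\ellnu)^*$ with $\|\Phi(c)\|_{(\ellnu)^*} \le \|c\|_{\infty,\frac{1}{\nu}}$; linearity of $\Phi$ in $c$, and of each $\Phi(c)$ in $a$, is immediate. For the reverse norm inequality I would test against the standard basis sequences $e_n \in \ellnu$ (a $1$ in slot $n$, zeros elsewhere), using $\|e_n\|_{1,\nu} = \nu^{|n|}$ and $\Phi(c)(e_n) = c_n$ to obtain $|c_n|/\nu^{|n|} \le \|\Phi(c)\|_{(\ellnu)^*}$ for every $n \in \mathbb{Z}$; taking the supremum over $n$ yields $\|c\|_{\infty,\frac{1}{\nu}} \le \|\Phi(c)\|_{(\ellnu)^*}$. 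Together with the upper bound this makes $\Phi$ an isometry, in particular injective.

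For surjectivity, given $\varphi \in (\ellnu)^*$ I would set $c_k \bydef \varphi(e_k)$ and observe that $|c_k| = |\varphi(e_k)| \le \|\varphi\|_{(\ellnu)^*} \|e_k\|_{1,\nu} = \|\varphi\|_{(\ellnu)^*}\, \nu^{|k|}$, so that $c = \{c_k\}_{k \in \mathbb{Z}} \in \ell_\nu^\infty$. It then suffices to prove $\Phi(c) = \varphi$. Both are bounded linear functionals on $\ellnu$ agreeing on every finitely supported sequence by construction, so the claim reduces to the density of finitely supported sequences in $\ellnu$: for $a \in \ellnu$, the truncations $a^{(N)}$, equal to $a_k$ for $|k| \le N$ and $0$ otherwise, satisfy $\|a - a^{(N)}\|_{1,\nu} = \sum_{|k| > N} |a_k| \nu^{|k|} \to 0$ as $N \to \infty$, being the tail of the convergent series defining $\|a\|_{1,\nu}$. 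Hence $\Phi(c)$ and $\varphi$ coincide on a dense subspace and, by continuity, everywhere.

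I do not anticipate a genuine obstacle; the single point deserving care is this density of finitely supported sequences in $\ellnu$ --- it is the structural feature distinguishing $\ellnu$ (an $\ell^1$-type space) from $\ell_\nu^\infty$, and it is exactly what forces the dual to exhaust all of $\ell_\nu^\infty$ rather than a weighted analogue of $c_0$. Everything else is bookkeeping with Lemma \ref{lemmaForConvo} and the identity $\|e_n\|_{1,\nu} = \nu^{|n|}$.
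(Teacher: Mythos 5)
Your proof is correct and complete: the pairing map, the two norm inequalities (the upper bound via Lemma \ref{lemmaForConvo}, the lower bound by testing on the basis sequences $e_n$), and surjectivity via density of finitely supported sequences in $\ellnu$ constitute the standard duality argument for weighted $\ell^1$ spaces. The paper itself does not reproduce a proof of this theorem --- it defers to the cited reference \cite{HLM} --- and your argument is exactly the one found in such sources, so there is nothing to reconcile.
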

This result gives us the following equality.
\begin{equation}\label{egalityNorm}
\sup_{\left\| a \right\|_{1,\nu}=1} \left| \sum_{n\in \mathbb{Z}} c_na_n \right|= \left\| \ell_c \right\|_{(\ellnu)^*}=\left\| c \right\|_{\infty,\frac{1}{\nu}} = \sup_{n\geq 0} \frac{|c_n|}{\nu^n}.
\end{equation}
Using these kind of sequences will also leads us to infinite operator, whose norm will be evaluated using the following corollary.

\begin{corollary}\label{corollary:BoundEventuallyDiagonal}
Let $A^{(m)}$ a finite matrix of size $2m-1 \times 2m-1$ with complex values entries and $\lbrace u_n \rbrace_{|n|\geq m}$ a bi-infinite sequence of complex numbers such as $|u_n| \leq |u_m|$ for every $|n| \geq m$. 
Let $a=(a_k)_{k\in \mathbb{Z}} \in \ellnu$ and $a^{(m)}=(a_{-m+1},\hdots,a_{m-1}) \in \mathbb{C}^{2m-1}$ its finite dimension projection. Define the linear operator $A$ by
\begin{align*}
 (Aa)_k= \begin{cases}
          (A^{(m)}a^{(m)})_k, & \mbox{ if } |k|< m \\
          u_ka_k, & \mbox{ if } |k|\geq m.
         \end{cases}
\end{align*}
Then $A \in B(\ellnu, \ellnu)$ is bounded and
\[
\left\| A \right\|_{B(\ellnu, \ellnu)} \leq \max(K,|u_m|),
\]
where $K \bydef \displaystyle \max_{|n| < m} \frac{1}{\nu^{|n|}} \sum_{|k|<m} |A_{k,n}|\nu^{|k|}$.
\end{corollary}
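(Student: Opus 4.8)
The plan is to estimate $\|Aa\|_{1,\nu}$ directly for an arbitrary $a \in \ellnu$ by splitting the defining sum according to whether $|k| < m$ or $|k| \geq m$, since $A$ acts by two qualitatively different rules on these two ranges. First I would write
\[
\|Aa\|_{1,\nu} = \sum_{|k|<m} \left|(A^{(m)}a^{(m)})_k\right|\nu^{|k|} + \sum_{|k|\geq m} |u_k a_k|\,\nu^{|k|},
\]
and treat the two pieces separately. The key structural observation is that the first piece depends only on the coordinates $a_n$ with $|n| < m$, while the second depends only on the coordinates $a_k$ with $|k| \geq m$; these index sets are disjoint, which is exactly what lets the two bounds combine into a single maximum rather than a sum.

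For the tail sum, the hypothesis $|u_k| \leq |u_m|$ for all $|k| \geq m$ gives at once
\[
\sum_{|k|\geq m} |u_k a_k|\,\nu^{|k|} \leq |u_m| \sum_{|k|\geq m} |a_k|\,\nu^{|k|}.
\]
For the finite part, I would apply the triangle inequality to $(A^{(m)}a^{(m)})_k = \sum_{|n|<m} A_{k,n} a_n$ and then interchange the two finite sums (no convergence subtlety, as everything is a finite sum), obtaining
\[
\sum_{|k|<m} \left|(A^{(m)}a^{(m)})_k\right|\nu^{|k|} \leq \sum_{|n|<m} |a_n| \sum_{|k|<m} |A_{k,n}|\,\nu^{|k|} = \sum_{|n|<m} |a_n|\,\nu^{|n|}\left(\frac{1}{\nu^{|n|}}\sum_{|k|<m}|A_{k,n}|\,\nu^{|k|}\right) \leq K \sum_{|n|<m} |a_n|\,\nu^{|n|},
\]
directly from the definition of $K$.

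Writing $S_1 = \sum_{|n|<m}|a_n|\nu^{|n|}$ and $S_2 = \sum_{|k|\geq m}|a_k|\nu^{|k|}$, so that $\|a\|_{1,\nu} = S_1 + S_2$, the two estimates combine to give $\|Aa\|_{1,\nu} \leq K S_1 + |u_m| S_2 \leq \max(K,|u_m|)(S_1 + S_2) = \max(K,|u_m|)\|a\|_{1,\nu}$. Since $a \in \ellnu$ was arbitrary, this proves both that $A \in B(\ellnu,\ellnu)$ and that $\|A\|_{B(\ellnu,\ellnu)} \leq \max(K,|u_m|)$. There is no genuine obstacle here; the only point requiring care is the bookkeeping of which coordinates of $a$ each block of $Aa$ depends on, since a careless application of the triangle inequality would yield only the weaker bound $K + |u_m|$. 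Everything else is the triangle inequality together with a rearrangement of finitely many terms.
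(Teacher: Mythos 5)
Your proof is correct and is exactly the standard argument for this bound: split $\|Aa\|_{1,\nu}$ over $|k|<m$ and $|k|\geq m$, recognize $K$ as the weighted column-sum (operator) norm of the finite block, and use the disjointness of the two coordinate ranges to combine the estimates into a maximum rather than a sum. The paper itself omits the proof (deferring to the cited reference), but the argument there is the same one you give, so there is nothing to add.
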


The nature of some the objects studied in this work leads to the construction of higher dimensional expansions with one non-periodic direction. We assume again that the series represents an analytic function, which lead to the following definition.

\begin{definition}\label{def:FourierTaylorSpace}
We denote by $X_\nu$ the space of all 
\[
x= \left\{ x_\alpha \in \ellnu : \alpha=0,1,2,\hdots \right\}
\] 
such that
\[
\|x\|_{X_\nu} \bydef \sum_{\alpha=0}^\infty \sum_{k \in \mathbb{Z}} \left| a_{\alpha,k} \right| \nu^{|k|} =   \sum_{\alpha=0}^\infty \left\| a_{\alpha} \right\|_{1,\nu} < \infty.
\]
\end{definition}

An element $x \in X$ will be the result of the expansion of a Taylor series with periodic coefficients, that are themselves expended using a Fourier series. This space also becomes a Banach algebra under the product defined below.

\begin{definition}\label{def:CCproduct}
For $a,b \in X$, we define the Cauchy-convolution product $\star: X \times X \to X $ by
\[
(a \star b)_{\alpha,k} = \displaystyle \sum_{\substack{\alpha_1 + \alpha_2 = \alpha\\ \alpha_1,\alpha_2 \geq 0}} \sum_{\substack{k_1+k_2 = k\\ k_1,k_2 \in \mathbb{Z}}} a_{\alpha_1,k_1}b_{\alpha_2,k_2}.
\]
for all $k \in \mathbb{Z}$ and $\alpha = 0,1,2,\hdots$. The special case where the value $\alpha_1=0$ or $\alpha_2=0$ are omitted from the sum arise in some circumstances of interested, so that this scenario will be given its own notation $\hat \star: X \times X \to X$. It follows that for all $k \in \mathbb{Z}$ and $\alpha \in \mathbb{Z}^+$, the product is given by
\[
(a~ \hat \star ~b)_{\alpha,k} = \displaystyle \sum_{\substack{\alpha_1 + \alpha_2 = \alpha\\ \alpha_1,\alpha_2 > 0}} \sum_{\substack{k_1+k_2 = k\\ k_1,k_2 \in \mathbb{Z}}} a_{\alpha_1,k_1}b_{\alpha_2,k_2}.
\]
\end{definition}

%
%
%
% Cauchy Bounds
%
%
%
%

\section{Cauchy Bounds}\label{sec:CauchyBound}
\begin{lemma}[Bounds for Analytic Functions given by Fourier Series] \label{lem:cauchyBounds_Fourier}
Suppose that $\{a_n \}_{n \in \mathbb{Z}}$ is a two sided sequence of complex numbers,
that $\nu > 1$, that $\omega > 0$, and that  
\[
\| a\| _{1,\nu} = \sum_{n \in \mathbb{Z}} |a_n| \nu^{|n|}  < \infty.
\]
Let 
\[
f(z) := \sum_{n \in \mathbb{Z}} a_n e^{i \omega k z}.
\]
For $r > 0$ let 
\[
A_r := \{z \in \mathbb{C} \, | \, |\mbox{{\em imag}}(z)| < r \},
\]
denote the open complex strip of width $r$.
\begin{itemize}
\item[(1)] \textbf{$\ell_\nu^1$ bounds imply $C^0$ bounds:} 
The function $f$ is analytic on the strip $A_r$ with $r = \ln(\nu)/\omega$,
continuous on the closure of $A_r$, and satisfies
\[
\sup_{z \in A_r} |f(z)| \leq \| a\|_{1,\nu}.
\]
Moreover $f$ is $T$-periodic with $T = 2 \pi/\omega$. 
\item[(II)] \textbf{Cauchy Bounds:} Let $0 < \sigma < r$ and 
\[
\tilde \nu = e^{\omega (r - \sigma)}.
\]
Define the sequence $b = \{b_n \}_{n \in \mathbb{Z}}$ by 
\[
b_n = i \omega n a_n.
\]
Then 
\[
\| b \|_{\tilde \nu}^1 \leq \frac{1}{e \sigma} \|a\|_{1,\nu},
\]
and
\[
\sup_{z \in A_{r - \sigma}} |f'(z)| \leq \frac{1}{e \sigma} \| a\|_{1,\nu}.
\]
\end{itemize}
\end{lemma}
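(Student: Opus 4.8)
The plan is to prove the two assertions of Lemma~\ref{lem:cauchyBounds_Fourier} in turn: first the elementary ``$\ell_\nu^1 \Rightarrow C^0$'' part on an explicit strip, and then the Cauchy-type estimate on a slightly narrower strip. For part (1), the strategy is to substitute $z = s + i t$ with $|t| < r$ and estimate each term: since $|e^{i\omega n z}| = e^{-\omega n t} \le e^{\omega |n| |t|} < e^{\omega |n| r}$, choosing $r = \ln(\nu)/\omega$ gives $e^{\omega|n|r} = \nu^{|n|}$, so the series for $f$ is dominated termwise by $\sum_n |a_n|\nu^{|n|} = \|a\|_{1,\nu} < \infty$. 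The Weierstrass $M$-test then yields uniform (hence normal) convergence on the closed strip $\overline{A_r}$, which forces $f$ to be analytic on the open strip and continuous up to the boundary, and the same domination gives $\sup_{z \in A_r}|f(z)| \le \|a\|_{1,\nu}$. The $T$-periodicity with $T = 2\pi/\omega$ is immediate from the periodicity of each $e^{i\omega n z}$ in $\mathrm{real}(z)$.

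For part (II), the key observation is that differentiating term-by-term (legitimate by the local uniform convergence established in part (1)) gives $f'(z) = \sum_n b_n e^{i\omega n z}$ with $b_n = i\omega n a_n$. The idea is then to apply part (1) to the new sequence $b$, but with the reduced width $r - \sigma$ in place of $r$: setting $\tilde\nu = e^{\omega(r-\sigma)}$, part (1) gives $\sup_{z \in A_{r-\sigma}}|f'(z)| \le \|b\|_{\tilde\nu}^1$. So everything reduces to the weighted-sequence estimate $\|b\|_{\tilde\nu}^1 \le \frac{1}{e\sigma}\|a\|_{1,\nu}$, i.e.\ to bounding
\[
\sum_{n \in \mathbb{Z}} \omega |n|\, |a_n|\, \tilde\nu^{|n|}
= \sum_{n \in \mathbb{Z}} |a_n|\, \nu^{|n|} \cdot \omega |n| \left(\frac{\tilde\nu}{\nu}\right)^{|n|}.
\]
Since $\tilde\nu/\nu = e^{\omega(r-\sigma)}/e^{\omega r} = e^{-\omega\sigma}$, the per-term multiplier is $\omega|n|\,e^{-\omega\sigma|n|}$, and the whole thing is controlled by $\|a\|_{1,\nu}\cdot \sup_{m \ge 0}\big(\omega m\, e^{-\omega\sigma m}\big)$. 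This is the one genuine calculus step: one checks, by viewing $g(x) = \omega x e^{-\omega\sigma x}$ on $[0,\infty)$ and setting $g'(x) = 0$, that the maximum occurs at $x = 1/(\omega\sigma)$, where $g$ equals $\frac{1}{\sigma} e^{-1} = \frac{1}{e\sigma}$. (It is harmless that the maximizing $x$ need not be an integer; any $m \ge 0$ satisfies the bound.) Combining gives $\|b\|_{\tilde\nu}^1 \le \frac{1}{e\sigma}\|a\|_{1,\nu}$, and feeding this back into the application of part (1) yields the stated sup-norm bound on $f'$ over $A_{r-\sigma}$.

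The only mild subtlety—and the ``main obstacle'' such as it is—is the bookkeeping with the two weights: one must keep $\nu$ (the hypothesis weight) and $\tilde\nu = e^{\omega(r-\sigma)}$ (the conclusion weight on the derivative sequence) carefully separated, and verify the identity $\tilde\nu/\nu = e^{-\omega\sigma}$ that makes the geometric decay $e^{-\omega\sigma|n|}$ appear and absorbs the linear factor $|n|$. There is no analytic difficulty beyond the elementary one-variable optimization; the convergence interchanges (termwise differentiation, Weierstrass $M$-test) are all justified by the normal convergence on closed substrips established in part (1). I would present part (1) first as a self-contained paragraph, then derive part (II) as a short corollary of it applied to $b$.
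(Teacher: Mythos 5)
Your proposal is correct and follows essentially the same route as the paper: part (1) by termwise domination with the weight $\nu^{|n|} = e^{\omega r |n|}$, and part (II) by applying part (1) to the differentiated sequence $b$ at the reduced width $r-\sigma$, with the key step being the elementary optimization of $x \mapsto \omega x\, e^{-\omega\sigma x}$ at $x = 1/(\omega\sigma)$ to absorb the factor $|n|$ into the ratio $(\tilde\nu/\nu)^{|n|} = e^{-\omega\sigma|n|}$. The paper's proof is the same argument with the auxiliary function written as $s(x) = x\alpha^x$, $\alpha = e^{-\omega\sigma}$.
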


\begin{proof}
For $\nu > 1$, let $r = \ln(\nu)/\omega$ (i.e. $\nu  = e^{\omega r}$) and consider $z \in A_r$.
We have that 
\begin{eqnarray}
|f(z)| &\leq& \sum_{n \in \mathbb{Z}} |a_n| \left| \left(e^{i \omega z}\right)^{n} \right| \\
&\leq& \sum_{n \in \mathbb{Z}} |a_n| \left(e^{\omega \left|\mbox{imag}(z) \right|}\right)^{|n|} \\
&\leq& \sum_{n \in \mathbb{Z}} |a_n| \nu^{|n|} \\
&=& \| a \|_{1,\nu}.
\end{eqnarray}
It follows that $f$ is analytic as the Fourier series converges absolutely and uniformly in $A_r$.
Continuity at the boundary of the strip also follows from the absolute summability of the series.

From the fact that $f$ is analytic in $A_r$ it follows that the derivative $f'(z)$ exists for any $z$ in the interior
of $A_r$ and that 
\[
f'(z) = \sum_{n \in \mathbb{Z}} i \omega n a_n e^{i \omega n z} = \sum_{n \in \mathbb{Z}} b_n e^{i \omega n z}.
\]
However the fact that $f$ is uniformly bounded on $A_r$ does not imply uniform 
bounds on $f'$.  Indeed it may be that $f'$ has singularities at the boundary.  
In order to obtain uniform bounds on the derivative we give up a portion of the width of the domain. More precisely we 
consider the supremum of $f'$ on the strip $A_{r - \sigma}$ with $0 < \sigma < r$.

First recall that $\nu = e^{\omega r}$ so that $\tilde \nu = e^{\omega(r - \sigma)}$.
We now define the function $s \colon \mathbb{R}^+ \to \mathbb{R}$ by 
\[
s(x) := x \alpha^x,
\]
with $\alpha = \tilde{\nu} / \nu = e^{-\omega \sigma} < 1$.
Note that $s(x) \geq 0$ for all $x \geq 0$,  $s(0) = 0$, and that $s(x) \to 0$ as $x \to \infty$.
Moreover $s$ is bounded and attains its maximum at 
\[
\hat x = \frac{1}{\omega \sigma}, 
\]
as can be seen by computing the critical point of $s$.  
Then we have the bound
\[
s(x) \leq s(\hat x) = \frac{1}{e \omega \sigma}.
\]
From this we obtain that
\begin{eqnarray*}
\| b\|_{1,tilde \nu} &=& \sum_{n \in \mathbb{Z}} |b_n| \tilde{\nu}^{|n|} \\
&=& \sum_{n \in \mathbb{Z}} \omega |n| |a_n| \tilde{\nu}^{|n|} \frac{\nu^{|n|}}{\nu^{|n|}} \\
&=& \sum_{n \in \mathbb{Z}} \omega |n| \left(\frac{\tilde \nu}{\nu}  \right)^{|n|} |a_n| \nu^{|n|}  \\
&=& \sum_{n \in \mathbb{Z}}  \omega s(|n|) |a_n| \nu^{|n|} \\
&\leq& \sum_{n \in \mathbb{Z}}  \omega \frac{1}{e \omega \sigma} |a_n| \nu^{|n|} \\ 
&=& \frac{1}{e \sigma} \| a\|_{1,\nu}.
\end{eqnarray*} 
It now follows by $(I)$ that if $z \in A_{r - \sigma}$ then
\[
|f'(z)| \leq  \|b\|_{1,\tilde \nu} \leq \frac{1}{e \sigma} \| a\|_{1,\nu},
\]
as desired.
\end{proof}

\begin{lemma}[Bounds for Analytic Functions given by Taylor Series] \label{lem:cauchyBounds_Taylor}
Let $\nu > 0$ and suppose that $\{a_n \}_{n \in \mathbb{N}}$ is a one sided sequence of complex numbers 
with  
\[
\| a\| _{1,\nu} = \sum_{n =0}^\infty |a_n| \nu^n  < \infty.
\]
Define 
\[
f(z) := \sum_{n=0}^\infty a_n z^n,
\]
and let
\[
D_\nu := \{z \in \mathbb{C} \, | \, |z| < \nu \},
\]
denote the complex disk of radius $\nu > 0$ centered at the origin.
\begin{itemize}
\item[(1)] \textbf{$\ell_\nu^1$ bounds imply $C^0$ bounds:} 
The function $f$ is analytic on the disk $D_\nu$,
continuous on the closure of $D_\nu$, and satisfies
\[
\sup_{z \in D_\nu} |f(z)| \leq \| a\|_{1,\nu}.
\] 
\item[(II)] \textbf{Cauchy Bounds:} Let $0 < \sigma \leq 1$ and 
\[
\tilde \nu = \nu e^{- \sigma}.
\]
Then 
\[
\sup_{z \in D_{\tilde \nu}} |f'(z)| \leq \frac{1}{\nu \sigma} \|a\|_{1,\nu}.
\]
\end{itemize}
\end{lemma}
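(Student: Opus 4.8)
The plan is to run, almost verbatim, the argument just given for the Fourier case in Lemma~\ref{lem:cauchyBounds_Fourier}, with the monomials $z^n$ playing the role of $e^{i\omega n z}$ and the complex disk $D_\nu$ playing the role of the strip $A_r$. First I would dispatch part (1): for any $z$ in the closed disk of radius $\nu$ the triangle inequality gives
\[
|f(z)| \leq \sum_{n=0}^\infty |a_n|\,|z|^n \leq \sum_{n=0}^\infty |a_n|\nu^n = \|a\|_{1,\nu} < \infty,
\]
so the power series converges absolutely and uniformly on $\overline{D_\nu}$. Uniform convergence of the (continuous) partial sums yields continuity of $f$ on $\overline{D_\nu}$, the radius of convergence is at least $\nu$ so $f$ is analytic on $D_\nu$, and the displayed $C^0$ bound is exactly the inequality above.

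For part (II), the point is to extract the sharp constant $1/(\nu\sigma)$. Since $f$ is analytic on $D_\nu$ it may be differentiated term by term there, so $f'(z) = \sum_{n=1}^\infty n a_n z^{n-1}$, and for $|z| \leq \tilde\nu$ we have
\[
|f'(z)| \leq \sum_{n=1}^\infty n |a_n| \tilde\nu^{n-1}
= \frac{1}{\tilde\nu}\sum_{n=1}^\infty n \left(\frac{\tilde\nu}{\nu}\right)^n |a_n|\nu^n .
\]
Following the Fourier proof, I would introduce $s(x) = x\alpha^x$ with $\alpha = \tilde\nu/\nu = e^{-\sigma} \in (0,1)$; then $s \geq 0$, $s(0)=0$, $s(x)\to 0$ as $x\to\infty$, and computing the critical point shows $s$ attains its maximum at $\hat x = 1/\sigma$ with $s(\hat x) = 1/(e\sigma)$. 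Applying $s(n) \leq 1/(e\sigma)$ inside the sum gives $|f'(z)| \leq \frac{1}{e\sigma\tilde\nu}\|a\|_{1,\nu}$. Substituting $\tilde\nu = \nu e^{-\sigma}$ turns the prefactor into $\frac{e^{\sigma-1}}{\sigma\nu}$, and since $0 < \sigma \leq 1$ we have $e^{\sigma-1} \leq 1$, hence $\sup_{z \in D_{\tilde\nu}} |f'(z)| \leq \frac{1}{\nu\sigma}\|a\|_{1,\nu}$, as claimed (the supremum over the open disk equals that over the closed disk of radius $\tilde\nu < \nu$, where the $f'$ series converges uniformly).

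The only genuine subtlety---and it is a mild one---is the use of the auxiliary function $s(x) = x e^{-\sigma x}$ together with the final inequality $e^{\sigma-1}\leq 1$ for $\sigma \in (0,1]$. A more naive approach, estimating $f'$ via the Cauchy integral formula on a circle of radius $\nu - \tilde\nu = \nu(1 - e^{-\sigma})$, would only yield the weaker constant $1/\big(\nu(1-e^{-\sigma})\big)$, and since $1 - e^{-\sigma} \leq \sigma$ this is strictly worse than $1/(\nu\sigma)$. So the series-based optimization is exactly what buys the clean constant in the statement. Everything else in the argument is routine and structurally identical to the proof of Lemma~\ref{lem:cauchyBounds_Fourier}.
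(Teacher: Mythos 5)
Your proposal is correct and follows essentially the same route as the paper: term-by-term differentiation, the auxiliary function $s(x)=x e^{-\sigma x}$ with maximum $1/(e\sigma)$ at $\hat x = 1/\sigma$, and the use of $\sigma\le 1$ (your $e^{\sigma-1}\le 1$ at the end versus the paper's $e^{\sigma}\le e$ in the middle) to land on the constant $1/(\nu\sigma)$. The only difference is bookkeeping in where the factor of $e^{\sigma}$ is absorbed, so there is nothing further to reconcile.
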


\begin{proof}
The proof is similar to the proof of Lemma \ref{lem:cauchyBounds_Fourier}, the difference being the 
estimate of the derivative.  So, since $f$ is analytic in $D_\nu$ we know that 
\[
f'(z) = \sum_{n=1}^\infty n a_n z^{n-1},
\]
for all $z \in B_\nu$, and again we will trade in some of the domain in order to obtain 
uniform bounds.  To this end 
choose $0 < \sigma \leq 1$ and define $\tilde \nu = \nu e^{-\sigma}$.  As in the Fourier case we 
consider a function $s \colon [0, \infty) \to \mathbb{R}$ defined by 
\[
s(x) := x e^{- \sigma x},
\]
and have that $s$ is a positive function with
\[
s(x) \leq \frac{1}{e \sigma}.
\]
Then for any $z \in D_{\tilde \nu}$ we have that 
\begin{eqnarray*}
|f'(z)| &=& \sum_{n=1}^\infty n |a_n| |z|^{n-1} \\
&\leq& \sum_{n=1}^\infty n |a_n|  \frac{\nu}{\nu}  | \nu e^{-\sigma} |^{n-1}  \\
&\leq& \sum_{n=1}^\infty \left( \frac{e^{\sigma}}{\nu} n e^{-\sigma n}  \right) |a_n| \nu^n \\
&\leq& \sum_{n=1}^\infty \left( \frac{e}{\nu} n e^{-\sigma n}  \right) |a_n| \nu^n \\
&\leq& \sum_{n=0}^{\infty} \frac{e}{\nu} s(n) |a_n|\nu^n \\
&\leq& \sum_{n=0}^\infty \frac{e}{\nu} \frac{1}{e \sigma} |a_n| \nu^n \\
&=& \frac{1}{\nu \sigma} \|a\|_{1,\nu}^1.
\end{eqnarray*}
\end{proof}

}

\bibliographystyle{plain}
\bibliography{references} 

\end{document}